\xpatchcmd{\bibsection}{*}{}{}{} 
\setlist[enumerate,1]{label={(\alph*)}}
\DeclareSymbolFont{largesymbolsstix}{LS2}{stixex}{m}{n}
\DeclareMathDelimiter{\lbrbrak}{\mathopen}{largesymbolsstix}{"EE}{largesymbolsstix}{"14}
\DeclareMathDelimiter{\rbrbrak}{\mathclose}{largesymbolsstix}{"EF}{largesymbolsstix}{"15}
\newcommand{\git}{\mathbin{
  \mathchoice{\mkern-3mu/\mkern-6mu/\mkern-3mu}
    {\mkern-3mu/\mkern-6mu/\mkern-3mu}
    {/\mkern-5mu/}
    {/\mkern-5mu/}}}
\newcommand\blfootnote[1]{%
  \begingroup
  \renewcommand\thefootnote{}\footnote{#1}%
  \addtocounter{footnote}{-1}%
  \endgroup
}
\newcommand{\genlegendre}[4]{%
  \genfrac{(}{)}{}{#1}{#3}{#4}%
  \if\relax\detokenize{#2}\relax\else_{\!#2}\fi
}
\newcommand{\WIP}[1]{\scalebox{#1}{\begin{tikzpicture}[limb/.style={line cap=round,line width=1.5mm,line join=bevel}]
\draw[line width=2mm,rounded corners,fill=yellow] (-2,0) -- (0,-2) -- (2,0) -- (0,2) -- cycle;
\fill (1.5mm,7mm) circle (1.5mm);
\fill(0,-7.5mm) -- ++(10mm,0mm) -- ++(120:2mm)--++(100:1mm)--++(150:2mm) arc (70:170:2.5mm and 1mm);
\draw[limb] (-7.5mm,-6.5mm)--++(70:4mm)--++(85:4mm) coordinate(a)--++(-45:5mm)--(-2.5mm,-6.5mm);
\fill[rotate around={45:(a)}] ([shift={(-0.5mm,0.55mm)}]a) --++(0mm,-3mm)--++
        (7mm,-0.5mm)coordinate(b)--++(0mm,4mm)coordinate(c)--cycle;
\draw[limb] ([shift={(-0.6mm,-0.4mm)}]b) --++(-120:5mm) ([shift={(-0.5mm,-0.5mm)}]c) --++
        (-3mm,0mm)--++(-100:3mm)coordinate (d);
\draw[ultra thick] (d) -- ++(-45:1.25cm);
\end{tikzpicture}}}
\definecolor{cec1d24}{RGB}{236,29,36}
\definecolor{cffffff}{RGB}{255,255,255}
\let\int\relax\DeclareMathOperator*{\int}{int}
\newcommand{\what}[1]{{\widehat{#1}}}
\newcommand{\inn}{\operatorname{inn}}
\newcommand{\lra}{\longrightarrow}
\newcommand{\ra}{\rightarrow}
\newcommand{\bA}{\mathbb{A}}
\newcommand{\bC}{\mathbb{C}}
\newcommand{\bF}{\mathbb{F}}
\newcommand{\bG}{\mathbb{G}}
\newcommand{\bP}{\mathbb{P}}
\newcommand{\bQ}{\mathbb{Q}}
\newcommand{\bR}{\mathbb{R}}
\newcommand{\bZ}{\mathbb{Z}}
\newcommand{\Qbar}{{\overline{\mathbb{Q}}}}
\newcommand{\cC}{\mathcal{C}}
\newcommand{\cG}{\mathcal{G}}
\newcommand{\cH}{\mathcal{H}}
\newcommand{\cN}{\mathcal{N}}
\newcommand{\cO}{\mathcal{O}}
\newcommand{\cS}{\mathcal{S}}
\newcommand{\la}[1]{\,^{#1}\!} 
\newcommand{\mf}[1]{\mathfrak{#1}} 
\newcommand{\fm}{\mathfrak{m}}
\newcommand{\fS}{\mathfrak{S}}
\newcommand{\Aut}{\operatorname{Aut}}
\newcommand{\Inn}{\operatorname{Inn}}
\newcommand{\Out}{\operatorname{Out}}
\newcommand {\spmatrix}[4]{\left[\begin{smallmatrix}#1&#2\\#3&#4\end{smallmatrix}\right]}
\newcommand{\thmatrix}[9]{\left[\begin{array}{rrr}#1&#2&#3\\#4&#5&#6\\#7&#8&#9\end{array}\right]}
\newcommand{\diag}{{\operatorname{diag}}}
\newcommand{\ctvector}[3]{\left[\begin{array}{c}#1\\#2\\#3\end{array}\right]}
\newcommand{\Ad}{\operatorname{Ad}}
\newcommand{\rad}{\operatorname{rad}}
\newcommand{\Gal}{{\operatorname{Gal}}}
\newcommand{\id}{\operatorname{id}}
\newcommand{\Hom}{\operatorname{Hom}}
\newcommand{\Conf}{\operatorname{Conf}}
\newcommand{\UConf}{\operatorname{UConf}}
\newcommand{\Epi}{\operatorname{Epi}}
\newcommand{\GL}{\operatorname{GL}}
\newcommand{\SL}{\operatorname{SL}}
\newcommand{\PSL}{\operatorname{PSL}}
\newcommand{\PSU}{\operatorname{PSU}}
\newcommand{\PU}{\operatorname{PU}}
\newcommand{\SU}{\operatorname{SU}}
\newcommand{\J}{\operatorname{J}}
\newcommand{\U}{\operatorname{U}}
\newcommand{\Or}{\operatorname{O}}
\newcommand{\SO}{\operatorname{SO}}
\newcommand{\Sz}{\operatorname{Sz}}
\newcommand{\Stab}{\operatorname{Stab}}
\newcommand{\Tr}{\operatorname{Tr}}
\newcommand{\Res}{\operatorname{Res}}
\newcommand{\ab}{{\operatorname{ab}}}
\newcommand{\tr}{\operatorname{tr}}
\newcommand{\rightiso}{\stackrel{\sim}{\longrightarrow}}
\newcommand{\ol}[1]{{\overline{#1}}}
\newcommand{\Spec}{\operatorname{Spec}}
\newcommand{\B}{\operatorname{B}}
\newcommand{\zar}{\text{zar}}
\newcommand{\Rp}{\operatorname{Re}}
\newcommand{\gap}{\vspace{0.4cm}}
\theoremstyle{definition}\newtheorem{defn}{Definition}[section]
\theoremstyle{remark}
\theoremstyle{remark}
\theoremstyle{remark}\newtheorem{remark}[defn]{Remark}
\theoremstyle{plain}\newtheorem{question}[defn]{Question}
\theoremstyle{remark}\newtheorem*{remark*}{Remark}
\theoremstyle{remark}
\theoremstyle{remark}
\theoremstyle{definition}
\theoremstyle{remark}
\theoremstyle{definition}
\theoremstyle{plain}
\theoremstyle{plain}
\theoremstyle{plain}\newtheorem{prop}[defn]{Proposition}
\theoremstyle{plain}\newtheorem{thm}[defn]{Theorem}
\theoremstyle{plain}
\theoremstyle{plain}\newtheorem{lemma}[defn]{Lemma}
\theoremstyle{plain}\newtheorem{cor}[defn]{Corollary}
\theoremstyle{plain}\newtheorem{conj}[defn]{Conjecture}
\theoremstyle{plain}\newtheorem*{thm*}{Theorem}
\theoremstyle{plain}\newtheorem*{conj*}{Conjecture}
\theoremstyle{plain}\newtheorem*{prop*}{Proposition}
\theoremstyle{plain}
\newcommand{\note}[1]{{\color{blue} \sf $\spadesuit$ NOTE: [#1]}}
\title{Finite simple characteristic quotients of the free group of rank 2}
\renewcommand{\int}{\operatorname{int}}
\author{William Y. Chen, Alexander Lubotzky, Pham Huu Tiep}
\begin{document}

\begin{abstract}
In this paper we describe how to explicitly construct infinitely many finite simple groups as characteristic quotients of the rank 2 free group $F_2$.	 This shows that a ``baby'' version of the Wiegold conjecture \cite{Lub11} fails for $F_2$, and provides counterexamples to two conjectures in the theory of noncongruence subgroups of $\SL_2(\bZ)$ \cite{Chen18}. Our main result explicitly produces, for every prime power $q\ge 7$, the groups $\SL_3(\bF_q)$ and $\SU_3(\bF_q)$ as characteristic quotients of $F_2$. Our strategy is to study specializations of the Burau representation for the braid group $B_4$, exploiting an exceptional relationship between $F_2$ and $B_4$ first observed by Dyer, Formanek, and Grossman \cite{DFG82}. Weisfeiler's strong approximation theorem guarantees that our specializations are surjective for infinitely many primes, but it is not effective. To make our result effective, we give another proof of surjectivity via a careful analysis of the maximal subgroup structures of $\SL_3(\bF_q)$ and $\SU_3(\bF_q)$. These examples are minimal in the sense that no finite simple group of the form $\PSL_2(\bF_q)$ appears as a characteristic quotient of $F_2$.
\end{abstract}

\maketitle

\blfootnote{A. L. was supported by the  European Research Council (ERC) under the European Union’s Horizon 2020 (grant agreement No 882751).}
\blfootnote{P. H. T. gratefully acknowledges the support of the NSF (grants DMS-1840702 and DMS-2200850), the Simons Foundation, and the Joshua Barlaz Chair in Mathematics.}

\section{Introduction}
Let $F_n$ be the free group on $n$ generators, $G$ a finite group, and $\Epi(F_n,G)$ the set of epimorphisms from $F_n$ onto $G$. The group $\Aut(G)\times\Aut(F_n)$ acts on $\Epi(F_n,G)$ as follows: for $\alpha\in\Aut(F_n),\beta\in\Aut(G)$, and $\varphi\in\Epi(F_n,G)$,
$$(\beta,\alpha).\varphi := \beta\circ\varphi\circ\alpha^{-1}$$
A long standing conjecture attributed to Wiegold \cite[Conjecture 2.5.4]{Pak00} asserts that if $n\ge 3$ and $G$ is a finite simple group, then this action is transitive. As $\Epi(F_n,G)/\Aut(G)$ can be identified with the set
$$\cN_n(G) := \{N\lhd G\;|\; F_n/N\cong G\}$$
of normal subgroups of $F_n$ with quotient isomorphic to $G$, this conjecture can be reformulated as:
\begin{conj}[Wiegold] If $n\ge 3$ and $G$ is a finite simple group, then $\Aut(F_n)$ acts transitively on $\cN_n(G)$.
\end{conj}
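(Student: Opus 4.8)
The plan is to recast the conjecture as a statement about Nielsen equivalence of generating tuples and then reduce it to absorbing a redundant generator. Fix a finite simple group $G$, an integer $n\ge 3$, and a basis $x_1,\dots,x_n$ of $F_n$; an epimorphism $\varphi\in\Epi(F_n,G)$ is then the same datum as a generating $n$-tuple $(\varphi(x_1),\dots,\varphi(x_n))$ of $G$, the $\Aut(F_n)$-action becomes the action of the elementary Nielsen transformations on tuples, and the $\Aut(G)$-action becomes the diagonal action. So transitivity of $\Aut(F_n)$ on $\cN_n(G)$ is equivalent to: all generating $n$-tuples of $G$ lie in a single orbit under Nielsen moves together with one application of $\Aut(G)$ — that is, $G$ has a unique "$T$-system" on $n$ generators. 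By the classification of finite simple groups, $G$ is $2$-generated, so one may fix a generating pair $(a,b)$ and the standard tuple $\tau_0=(a,b,1,\dots,1)$; it then suffices to carry every generating $n$-tuple to $\tau_0$.

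This splits into two steps. Step (A): given a generating $n$-tuple $(g_1,\dots,g_n)$ with $n\ge 3$, use Nielsen moves to reach a generating $n$-tuple with $g_n=1$ (absorb one redundant generator). Step (B): once a coordinate is trivial, the remaining generating $(n-1)$-tuple can be manipulated using the freed slot as scratch space; one reduces the number of nontrivial coordinates to two by induction and finally adjusts by an element of $\Aut(G)$ to match $(a,b)$. Step (B) is essentially the classical "Nielsen moves with a spare generator" argument and is not where the difficulty lies; all the content is in Step (A).

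For Step (A) there are two natural lines of attack. The combinatorial/length-function approach — take a tuple in the Nielsen class minimizing $\sum_i\ell(g_i)$ for a suitable function $\ell$ on $G$ and argue one can always shorten unless some $g_i=1$ — is exactly how the analogue is proved for free groups (Nielsen) and for solvable $G$ (Dunwoody, Evans), but it breaks for simple groups because $G$ carries no Euclidean-type length and a generating tuple can be locally irreducible. The expansion-theoretic approach instead aims to show the product-replacement graph $\Gamma_n(G)$ on generating $n$-tuples is connected by a counting argument: one uses Frobenius-type character formulas to count generating $n$-tuples with prescribed word-map values, shows (via commutator-width and Waring-type bounds for $G$, uniform in the Lie rank and the size of the field) that this count is too small to be split among two or more Nielsen orbits, and concludes connectivity. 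This is the mechanism behind the known partial results of Garion–Shalev and related work, and it is where the representation theory of groups of Lie type enters.

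The main obstacle is that this is Wiegold's conjecture in full, a long-standing open problem: neither line of attack is currently known to succeed uniformly over all finite simple $G$ — the length-function method because simple groups lack a usable length, and the character-sum method because the available bounds are not sharp enough to force a single orbit for small $n$, in particular $n=3$, across all families. A realistic intermediate target, in the spirit of the effective subgroup-theoretic arguments used elsewhere in this paper for $\SL_3(\bF_q)$ and $\SU_3(\bF_q)$, would be to settle the case $n=3$ for one infinite family such as $\PSL_2(\bF_q)$ or $\PSL_3(\bF_q)$ by using the explicit maximal-subgroup structure to pin down which $3$-tuples generate, and then to propagate to all $n\ge 3$ via the scratch-space induction of Step (B).
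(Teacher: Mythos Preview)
The statement you are addressing is a \emph{conjecture}, not a theorem: the paper states it as Wiegold's conjecture and does not attempt to prove it --- indeed the paper's contribution runs in the opposite direction, showing that even the weaker ``baby Wiegold'' statement fails for $n=2$. You recognize this yourself in the final paragraph, so there is no proof in the paper to compare against, and your write-up is better read as a survey of possible strategies than as a proof attempt.

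That said, two comments on your outline. First, your Step~(B) is correct in substance but mis-sold: it is \emph{not} a classical triviality. The reduction from a generating tuple with one trivial coordinate to the fixed tuple $(a,b,1,\dots,1)$ relies on the connectivity of the generating graph of a finite simple group (and, for $n\ge 4$, on positive spread), results established by Guralnick--Kantor and Breuer--Guralnick--Kantor around 2000--2008. With those inputs your ``scratch space'' induction does go through, so Step~(B) is known, but it deserves a citation rather than the label ``classical''. Second, your Step~(A) is indeed the crux, and neither the length-function approach nor the character-sum approach you sketch is presently known to work; your diagnosis of why each stalls is accurate. In short: reasonable framing, honest conclusion, but there is no proof here to evaluate because none exists --- in the paper or anywhere else.
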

In the language of group presentations, the conjecture roughly says that for any $n\ge 3$, any finite simple group $G$ has a unique presentation as an $n$-generated group. It has been known for a long time that the statement of the conjecture is false for $n = 2$. In fact, in 1951 B. H. Neumann and H. Neumann noticed that already for $G = A_5$, $\Aut(F_2)$ acts with two orbits on $\cN_2(A_5)$ \cite{NN51}. A more quantitative statement was proved by Garion and Shalev, who showed that $|\cN_2(G)/\Aut(F_2)|\ra\infty$ for any infinite family of nonabelian finite simple groups $G$. I.e., the number of orbits goes to infinity as the order of the group goes to infinity.\footnote{The case where $G$ ranges over groups of type $\PSL_2(\bF_q)$ was already known to Evans \cite{Evans85}.}

In \cite{Lub11}, the second author suggested a ``baby'' version of the Wiegold conjecture, which seems to be of independent interest:

\begin{conj}[``Baby Wiegold'', Lubotzky \cite{Lub11}] For $n\ge 3$ and $G$ a finite simple group, $\Aut(F_n)$ has no fixed points on $\cN_n(G)$.	
\end{conj}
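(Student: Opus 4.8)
The plan is to turn a hypothetical fixed point into a very rigid generating tuple of $G$ and then attack the rigidity. Fix $\varphi\in\Epi(F_n,G)$ with $N=\ker\varphi$ characteristic, and set $g_i=\varphi(x_i)$, so $\langle g_1,\dots,g_n\rangle=G$. Characteristicity of $N$ is equivalent to: every $\alpha\in\Aut(F_n)$ is ``realized'' on the tuple, i.e.\ there is a unique $\bar\alpha\in\Aut(G)$ with $\bar\alpha(g_i)=\varphi(\alpha(x_i))$ for all $i$ (existence because $\alpha(N)=N$, uniqueness because the $g_i$ generate $G$). I would feed three kinds of Nielsen automorphisms into this.

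\textbf{Symmetric-group rigidity.} The permutations $S_n\le\Aut(F_n)$ give a homomorphism $S_n\to\Aut(G)$, $\sigma\mapsto\bar\sigma$ with $\bar\sigma(g_i)=g_{\sigma(i)}$; its kernel is the stabilizer of the partition of $\{1,\dots,n\}$ into the fibres of $i\mapsto g_i$, hence a normal subgroup of $S_n$. Since the $g_i$ cannot all coincide ($G$ is nonabelian), scanning the list of normal subgroups of $S_n$ for each $n\ge3$ forces this kernel to be trivial, so the $g_i$ are distinct and $S_n\hookrightarrow\Aut(G)$. As $\Out(G)$ is solvable (Schreier's conjecture), for $n\ge5$ this further yields $A_n\hookrightarrow\Inn(G)\cong G$. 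In particular the conjecture holds whenever $n!>|\Aut(G)|$.

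\textbf{Killing degeneracies.} If $g_j=1$, then $\{g_k:k\ne j\}$ still generates $G$ and is fixed by the automorphism realizing $x_j\mapsto x_jx_\ell$ (other generators unchanged); that automorphism is therefore the identity, giving $g_\ell=g_jg_\ell=1$, and letting $\ell$ vary forces $G=1$. So all $g_i\ne1$. If $\{g_k:k\ne j\}$ generated $G$ for some $j$, then the automorphism realizing the partial conjugation $x_j\mapsto x_\ell^{-1}x_jx_\ell$ fixes a generating set, hence is trivial, so $g_\ell$ centralizes $g_j$ for all $\ell$; thus $g_j\in Z(G)=1$, contradicting $g_j\ne1$. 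Hence $(g_1,\dots,g_n)$ is an \emph{irredundant} generating tuple, which bounds $n\le\log_2|G|$. Together with the previous point, this settles the conjecture whenever $G$ is small relative to $n$.

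\textbf{The hard case and the obstacle.} What is left is $n$ small --- notably $n=3,4$ --- with $G$ large (in particular $A_n\le G$ when $n\ge5$), where the arguments above no longer bite, since simple groups do admit long irredundant generating tuples. Here I would pursue one of two routes, each substantial. (a) Exploit the full map $\Aut(F_n)\to\Aut(G)$: its image is a finite quotient $H$ of $\Aut(F_n)$ with $\Inn(G)\trianglelefteq H\le\Aut(G)$, so $G$ is a section of $\Aut(F_n)$; one would then try to classify the finite quotients of $\Aut(F_n)$ ($n\ge3$) finely enough to exclude this --- but no such classification is available. (b) Run a case analysis over the classification of finite simple groups, using the precise structure of $\Aut(G)$ and its orbits on generating tuples to show that an irredundant generating $n$-tuple cannot simultaneously carry the $S_n$-symmetry above and be stable under the move $x_1\mapsto x_1x_2$ (rest fixed). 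This last case is the main obstacle: the surviving constraints, though strong, are entangled with the still-open Wiegold conjecture --- indeed the statement would follow from the open assertion that every generating $n$-tuple of a finite simple group ($n\ge3$) is Nielsen-equivalent to one with a trivial entry --- which is exactly why it is recorded here as a conjecture rather than proved.
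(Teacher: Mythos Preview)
The statement you are addressing is recorded in the paper as an open \emph{conjecture}, not a theorem: the paper offers no proof of Baby Wiegold for $n\ge 3$, and in fact the paper's contribution goes in the opposite direction, showing that the analogous statement \emph{fails} for $n=2$. So there is no ``paper's own proof'' to compare against.

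Your write-up is therefore not a failed proof so much as a correct recognition that the problem is open, preceded by some valid partial observations. The $S_n$-embedding argument is sound: the kernel of $S_n\to\Aut(G)$ is a Young subgroup, and the only normal Young subgroups of $S_n$ for $n\ge 3$ are $1$ and $S_n$, so the $g_i$ are pairwise distinct and $S_n\hookrightarrow\Aut(G)$; invoking Schreier for $n\ge 5$ to get $A_n\hookrightarrow G$ is also fine. The irredundancy argument via partial conjugations and elementary transvections is likewise correct and yields the bound $n\le \log_2|G|$. These are genuine constraints, but as you yourself note in the final paragraph, they leave the essential case (small $n$, large $G$) untouched, and your two suggested routes---classifying finite quotients of $\Aut(F_n)$, or a CFSG case analysis on $\Aut(G)$-orbits of irredundant tuples---are each tantamount to attacking Wiegold-type problems that remain open. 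In short: your proposal is an honest and technically accurate summary of why the conjecture is plausible and what the obstacles are, not a proof; this matches the paper, which also does not claim one.
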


It is easy to see that the ``Baby Wiegold conjecture'' is equivalent to
\begin{conj} For $n\ge 3$, $F_n$ does not have a characteristic finite index subgroup $R$ for which $F_n/R$ is a finite simple group.	
\end{conj}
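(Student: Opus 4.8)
The plan is to reduce the statement to a fixed-point problem for $\Aut(F_n)$ and then attack that problem through the structure of $\Aut(G)$ for finite simple $G$, using the classification of finite simple groups.

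\emph{Step 1: reduction.} If $R\lhd F_n$ has finite index and $F_n/R\cong G$, then $R$ is a point of the set $\cN_n(G)$, on which $\Aut(F_n)$ acts by $\alpha\cdot N=\alpha(N)$ (this is the action on $\Epi(F_n,G)$ transported across the identification $\cN_n(G)=\Epi(F_n,G)/\Aut(G)$), and $R$ is characteristic exactly when it is a fixed point of this action. So the statement is equivalent to asserting that for $n\ge 3$ and every finite simple $G$, the group $\Aut(F_n)$ has no fixed point on $\cN_n(G)$; this is the Baby Wiegold conjecture, and the content lies entirely here.

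\emph{Step 2: the abelian case and the symmetric subgroup.} Fix a free basis $x_1,\dots,x_n$ of $F_n$, let $g_i$ be the image of $x_i$ in $G$, and suppose the quotient map represents an $\Aut(F_n)$-fixed point. Then every Nielsen transformation of the tuple $(g_1,\dots,g_n)$ is induced by some element of $\Aut(G)$, yielding a homomorphism $\rho\colon\Aut(F_n)\to\Aut(G)$ whose image contains $\Inn(G)$ (inner automorphisms of $F_n$ map to conjugations by the $g_i$, which generate $G$) and for which $\rho(\Aut F_n)/\Inn(G)$ is a solvable quotient of $\Out(F_n)$ — solvable because it embeds in $\Out(G)$, which is solvable by the Schreier property. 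Consider the image $P=\rho(S_n)$ of the subgroup of $\Aut(F_n)$ permuting the basis. For $n\ge 5$, $P$ is $1$, $\bZ/2$, or $S_n$. If $P\in\{1,\bZ/2\}$ then the transpositions $(1\,2)$ and $(1\,3)$ have the same image in $\Aut(G)$; evaluating that common automorphism on $g_1$ and on $g_2$ forces $g_1=g_2=g_3$, and the remaining transpositions force all $g_i$ to coincide, so $G=\langle g_1\rangle$ is cyclic. But no characteristic finite-index $R$ can have cyclic quotient when $n\ge 2$: $R$ would descend to a $\GL_n(\bZ)$-invariant finite-index subgroup of $F_n^{\ab}=\bZ^n$, and these are exactly the $m\bZ^n$, with noncyclic quotient $(\bZ/m)^n$. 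If instead $P\cong S_n$, then $\Aut(G)$ contains $A_n$; since $A_n$ is nonsolvable for $n\ge 5$ it cannot embed in $\Out(G)$, so $A_n\cap\Inn(G)=A_n$, i.e.\ $A_n\hookrightarrow G$. For each fixed finite simple $G$ this bounds $n$, so the conjecture holds for all large enough $n$; dually, for each fixed $n$ it remains to treat $n\in\{3,4\}$ and the simple groups $G$ containing $A_n$.

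\emph{Step 3: residual cases and the main obstacle.} For $G=\PSL_2(\bF_q)$, Dickson's classification of subgroups shows $A_n\hookrightarrow\PSL_2(\bF_q)$ forces $n\le 6$, so the conjecture holds for this family whenever $n\ge 7$, while $n\in\{3,\dots,6\}$ can be handled using the explicit analysis of Nielsen (T-system) equivalence in two-generated groups due to Evans and others. For a general $G$ containing $A_n$, one exploits the elementary automorphisms $\phi_{ij}\in\rho(\Aut F_n)$ (induced by $x_i\mapsto x_i x_j$): each $\phi_{ij}$ fixes $\langle g_k:k\ne i\rangle$ pointwise, and combining these constraints with the known maximal-subgroup structure — of the kind analyzed in this paper for $\SL_3$ and $\SU_3$ — should show the $g_i$ are pinned too rigidly to generate $G$. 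The step I expect to be the main obstacle is exactly this: there is no uniform argument, one must run through the classification family by family, and simultaneously controlling the elementary, permutation, and inversion automorphisms acting on a single generating tuple is the delicate T-system bookkeeping that has so far blocked a proof of the full Wiegold conjecture; the hardest cases should be the low-rank classical groups, where $\Aut(G)$ is largest relative to the rigidity one can extract.
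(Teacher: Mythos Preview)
The statement you are attempting to prove is not a theorem in the paper: it is stated as a \emph{conjecture}. The only claim the paper makes about it is that it is ``easy to see'' to be equivalent to the Baby Wiegold conjecture, which is itself presented as open. Your Step~1 correctly reproduces this equivalence, and that is all the paper actually establishes regarding this statement.

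Your Steps~2--3 then attempt to \emph{prove} Baby Wiegold. Step~2 contains a valid observation: for $n\ge 5$, the image of $S_n$ in $\Aut(G)$ is either small (forcing all $g_i$ equal, hence $G$ cyclic, which you correctly rule out) or faithful (forcing $A_n\hookrightarrow G$ via Schreier). This is a genuine reduction, but it only shows that for each fixed $G$ the conjecture holds for sufficiently large $n$, and for each fixed $n\ge 5$ restricts attention to simple $G$ containing $A_n$. It says nothing about $n=3,4$, and does not complete any case with $n\ge 5$ either.

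Step~3 is not a proof but a strategy sketch, and you say so yourself: you write that the remaining analysis ``should show'' the desired rigidity, that ``there is no uniform argument, one must run through the classification family by family,'' and that this bookkeeping ``has so far blocked a proof of the full Wiegold conjecture.'' That is an accurate assessment of the state of affairs, but it means your proposal is not a proof of the conjecture --- it is an outline of a possible approach to an open problem, with the hard part explicitly left undone. There is nothing to compare against, since the paper offers no proof of this statement.
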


The goal of this note is to show that even the baby Wiegold conjecture fails for $n = 2$:
\begin{thm}[see \S\ref{ss_burau_roots}]\label{thm_initial} The free group on two generators has infinitely many characteristic subgroups $N$ with $F_2/N$ being a finite simple group. In particular, the Baby Wiegold conjecture fails for $n = 2$.
\end{thm}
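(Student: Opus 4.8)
The plan is to reduce Theorem~\ref{thm_initial} to the explicit construction of one infinite family of finite simple groups arising as $F_2/N$ with $N$ characteristic; the family will consist of the groups $\SL_3(\bF_q)$ and $\SU_3(\bF_q)$ for prime powers $q\ge 7$, restricted to congruence classes ($\gcd(3,q-1)=1$, resp.\ $\gcd(3,q+1)=1$) in which these groups are genuinely simple. Since there are infinitely many such $q$ and distinct $q$ give quotients of distinct order, this yields infinitely many distinct characteristic $N$, and each such $N$, being characteristic with $F_2/N$ simple, is a fixed point of $\Aut(F_2)$ on $\cN_2(F_2/N)$, so the Baby Wiegold conjecture fails for $n=2$. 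The engine is the Dyer--Formanek--Grossman observation \cite{DFG82}: the homomorphism $B_4\to B_3$ with $\sigma_1,\sigma_3\mapsto\sigma_1$, $\sigma_2\mapsto\sigma_2$ is split surjective with kernel a free group of rank $2$. I would identify $F_2$ with this kernel, so that conjugation in $B_4$ gives $\Phi\colon B_4\to\Aut(F_2)$, and record (again from \cite{DFG82}) that $\Aut(F_2)$ is generated by $\im\Phi$ together with the restriction to $F_2$ of the ``flip'' $\sigma_i\mapsto\sigma_{4-i}$ of $B_4$, which preserves this kernel since it commutes with the map to $B_3$.

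Next I would push $F_2$ through the reduced Burau representation $\rho\colon B_4\to\GL_3(\bZ[t^{\pm1}])$. Every element of $\ker(B_4\to B_3)$ has exponent sum $0$ in $B_4$ and $\det\rho(\sigma_i)=-t$, so $\rho|_{F_2}$ lands in $\SL_3$; specializing $t$ to a unit $\zeta$ of a finite field gives $\rho_\zeta\colon F_2\to\SL_3$, and I put $N_\zeta=\ker\rho_\zeta$. The crux is that $N_\zeta$ is characteristic. For $b\in B_4$, $w\in F_2$ one has $\rho_\zeta(\Phi(b)(w))=\rho(b)|_\zeta\,\rho_\zeta(w)\,(\rho(b)|_\zeta)^{-1}$, so conjugation by $\rho(b)|_\zeta\in\GL_3$ preserves $\ker\rho_\zeta$; hence $N_\zeta$ is $\im\Phi$-invariant. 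The flip is handled using Squier's invariant sesquilinear form for reduced Burau together with the order-reversal symmetry of $\rho$: these identify $\rho_\zeta$ precomposed with the flip, up to conjugation in $\GL_3$, with the contragredient $w\mapsto(\rho_\zeta(w)^{-1})^{T}$ of $\rho_\zeta$, which has the same kernel. So $N_\zeta$ is fixed by a generating set of $\Aut(F_2)$, hence characteristic.

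It then remains to choose $\zeta$ making $\rho_\zeta$ surjective onto a finite simple group. For the linear case take $\zeta\in\bF_q\setminus\{0,\pm1\}$; for the unitary case take $\zeta\in\bF_{q^2}^{\times}$ with $\zeta^{q+1}=1$, $\zeta\ne1$, so that Squier's form specializes to a nondegenerate Hermitian form and $\rho_\zeta(F_2)\subseteq\SU_3(\bF_q)$. Qualitatively, the Burau image of $F_2$ is Zariski dense in $\SL_3$ (it is absolutely irreducible and not imprimitive), so Weisfeiler's strong approximation theorem makes $\rho_\zeta$ surjective for a density-one set of primes --- already enough to prove Theorem~\ref{thm_initial}, but ineffective. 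For the effective version I would fix explicit matrix generators $g_1,g_2$ of $\rho_\zeta(F_2)$ and show by hand that $\langle g_1,g_2\rangle$ lies in no maximal subgroup of $\SL_3(\bF_q)$ (resp.\ $\SU_3(\bF_q)$): walking through Aschbacher's classes and the maximal-subgroup lists, one rules out the reducible, imprimitive, and subfield subgroups, the normalizers of tori and of $p$-subgroups, the form-preserving subgroups (e.g.\ $\SO_3$), and the finitely many almost simple possibilities ($\PSL_2(7)$, $A_6$, \dots), in each case by exhibiting an element of $\langle g_1,g_2\rangle$ --- of a forbidden order, trace, or fixed-space dimension --- that the subgroup cannot contain, with all estimates uniform in $q$ once $q\ge 7$.

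The genuinely hard part is this last step. The reduction to one family and the characteristicity of $N_\zeta$ are formal once the $B_4$--$F_2$ dictionary and the Squier/flip symmetries of Burau are in place, and the Zariski density plus Weisfeiler argument is routine; the real work is the explicit, $q$-uniform verification that the specialized Burau image escapes every maximal subgroup of $\SL_3(\bF_q)$ and $\SU_3(\bF_q)$, the subtle cases being the subfield subgroups for small $q$ and the small exceptional almost simple subgroups.
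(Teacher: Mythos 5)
Your reduction and the overall architecture (Burau on the rank-two normal subgroup $F=\ker(B_4\to B_3)$, specialize, get invariance from normality, get surjectivity from strong approximation or maximal-subgroup analysis) is the paper's route, but the characteristicity step has a genuine gap. The conjugation map $\Phi$ has image exactly $\Aut^+(F_2)$, the index-two subgroup acting with determinant $1$ on $F_2^{\ab}$, and your extra generator does not leave it: the flip $\sigma_i\mapsto\sigma_{4-i}$ is \emph{inner} in $B_4$ (conjugation by the half-twist $\Delta=\sigma_1\sigma_2\sigma_3\sigma_1\sigma_2\sigma_1$), and its restriction to $F_2$ is $(x,y)\mapsto(x^{-1},y^{-1})$, which induces $-I$ on $\bZ^2$ and so already lies in $\im\Phi$. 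Hence $\langle \im\Phi,\ \mathrm{flip}|_{F_2}\rangle=\Aut^+(F_2)\neq\Aut(F_2)$, and your argument only shows that $N_\zeta$ is invariant under an index-two subgroup of $\Aut(F_2)$, not that it is characteristic. The determinant $-1$ coset, represented for instance by $\alpha:(x,y)\mapsto(x^{-1},y)$, is precisely the nontrivial point: the paper devotes Proposition \ref{prop_bootstrap} to it, showing $\rho_{k_0}\circ\alpha$ agrees with $\rho_{k_0}$ twisted by the non-inner ``conjugation'' automorphism $B\mapsto \delta^{-1}JBJ\delta$ of $\U_h(k_0)$ (semilinear $J$ for the real structure, corrected by an explicit matrix $\delta$ centralizing $X$). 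This cannot be repaired by conjugation in $\GL_3$ alone, since $\alpha$ does not preserve Burau traces (e.g.\ $w=xyx^{-2}y^2$), so a clean ``up to $\GL_3$-conjugacy the flip gives the contragredient'' statement will not carry the missing coset; you must both pick a genuine determinant $-1$ generator and verify an identity of the Prop.~\ref{prop_bootstrap} type for it.

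Two secondary points. First, your justification of Zariski density (``absolutely irreducible and not imprimitive'') is insufficient: the Burau image preserves the Hermitian form $h$, so one must also exclude containment in form-preserving proper subgroups such as $\SO_3$, in torus normalizers, and in finite primitive groups; the paper instead gets density at $q=\zeta_3$ from arithmeticity (Venkataramana/McMullen/Deligne--Mostow) plus Borel density, and strong approximation also needs the trace-field hypothesis, which is why the $\zeta_3$ specialization (everything over $\bZ$) is used for Theorem \ref{thm_initial}. Second, rather than restricting to $q$ with $\gcd(3,q\mp1)=1$, the paper simply passes to the quotient by the center, obtaining $\PSL_3$ and $\PSU_3$; this is harmless for ``infinitely many'' but worth noting. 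The effective maximal-subgroup analysis you sketch is the content of the separate Theorem \ref{thm_tiep} and is not needed for Theorem \ref{thm_initial}.
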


The strong failure of Wiegold's conjecture in the case $n = 2$ as shown in \cite{Evans85,GS09} can be explained as follows: It is a classical result of Nielsen that if $a,b$ and $a',b'$ are two bases for $F_2$, then the commutator $[a,b]$ is conjugate to $[a',b']^{\pm1}$. It follows that for $\varphi\in\Epi(F_2,G)$, the union of the conjugacy classes of $[\varphi(a),\varphi(b)]^{\pm1}$ is an invariant of the $\Aut(F_2)$-action. Since almost every pair of elements of a finite simple group $G$ generates, it is natural to expect that the number of conjugacy classes of commutators of generating pairs goes to $\infty$ as $|G|\ra\infty$; a main achievement of \cite{GS09} is making this idea precise. However, their method of proof does not give any information on fixed points, and so does not say anything about baby Wiegold.

In this paper we take a very different route. In \cite{Lub11}, a connection was established between Wiegold's conjecture and the following well-known conjecture.

\begin{conj}\label{conj_vs} For every finite dimensional linear representation $\rho$ of $\Aut(F_n)$, $n\ge 3$, $\rho(\Inn(F_n))$ is virtually solvable.
\end{conj}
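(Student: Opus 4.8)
\emph{This is an open conjecture, so the following is a proposed line of attack, not a proof; the available techniques fall well short.} The hypothesis $n\ge 3$ is indispensable: the statement fails for $n=2$, since $\Aut(F_2)$ is linear and already a faithful representation gives $\rho(\Inn(F_2))\cong F_2$, which is not virtually solvable (and by Theorem~\ref{thm_initial} together with \cite{Lub11} the failure is in fact far more dramatic). Any successful argument must therefore use the non-linearity of $\Aut(F_n)$, $n\ge 3$, in an essential way.

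I would first pass to a cleaner, stronger statement. Given a finite-dimensional $\rho\colon\Aut(F_n)\to\GL_d(k)$, let $R$ be the solvable radical of the Zariski closure of $\rho(\Aut(F_n))$ and let $\bar\rho\colon\Aut(F_n)\to Q=\rho(\Aut(F_n))/R$ be the induced map onto the semisimple quotient. If $\rho(\Inn(F_n))\subseteq R$ then this image is solvable, a fortiori virtually solvable; so it suffices to prove $\bar\rho(\Inn(F_n))=1$. Since $\Inn(F_n)\cong F_n$ is the normal closure in $\Aut(F_n)$ of the single inner automorphism $c_{x_1}$ given by conjugation by the first basis element (the $\Aut(F_n)$-orbit of $x_1$ contains $x_1,\dots,x_n$, and $c_{x_1},\dots,c_{x_n}$ generate $\Inn(F_n)$), the image $\bar\rho(\Inn(F_n))$ is the normal closure of $\bar\rho(c_{x_1})$ in $Q$, and the problem collapses to a rigidity statement about one element: $\bar\rho(c_{x_1})=1$. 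In words: $\Aut(F_n)$ with $n\ge 3$ should have no finite-dimensional semisimple linear quotient in which any inner automorphism survives.

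To force $\bar\rho(c_{x_1})=1$ I would exploit the non-linearity phenomenon of Formanek--Procesi. Embed $\Aut(F_3)\hookrightarrow\Aut(F_n)$ by fixing $x_4,\dots,x_n$; inside $\Aut(F_3)$ sits a copy of $\Aut(F_2)\ltimes F_2$, in which $\Aut(\langle x_1,x_2\rangle)$ acts by conjugation on the free group of ``partial transvections'' $\{x_3\mapsto x_3w : w\in\langle x_1,x_2\rangle\}$, and in which $\Aut(F_2)$ itself (with its quotient $\GL_2(\bZ)$, and with the large outer action of $\GL_n(\bZ)\le\Out(F_n)$ available in the background) supplies Formanek--Procesi ``poison'' subgroups: solvable groups $P$ carrying a designated nontrivial element $w_0$ that every finite-dimensional representation must annihilate, the obstruction being a commutator-and-eigenvalue identity that already survives in the semisimple quotient $Q$. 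The plan is then: (i) choose the configuration so that its poisoned element $w_0$ is moreover $\Aut(F_n)$-conjugate to $c_{x_1}$ --- or, failing that, show that the poisoned elements, as the configuration varies, together normally generate $\Inn(F_n)$; (ii) conclude $\bar\rho(c_{x_1})=1$, hence $\bar\rho(\Inn(F_n))=1$ and $\rho(\Inn(F_n))\subseteq R$.

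The crux --- and the reason the conjecture is still open --- is step (i). For non-linearity of $\Aut(F_n)$ it is enough that \emph{some} nontrivial element be poisoned, and one has no control over which; pinning a poisoned element to an inner automorphism, or showing the poisoned elements normally generate $\Inn(F_n)$, is unknown, and there is no classification of finite-dimensional representations of $\Aut(F_n)$ to substitute for it. Each poison subgroup kills essentially one element, so covering $\Inn(F_n)$ in \emph{every} representation seems to require a genuinely new rigidity input: a super-rigidity analogue for $\Aut(F_n)$, or a bridge from the property (T) theorems now known for $\Aut(F_n)$ with $n\ge 4$ (which govern \emph{unitary} representations) to arbitrary linear representations, and down to $n=3$. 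Without such an input one can only extract that $\rho$ is never faithful, which is very far from the virtual solvability of $\rho(\Inn(F_n))$.
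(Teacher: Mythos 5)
You have read the situation correctly: this statement is Conjecture \ref{conj_vs} of the paper, not a theorem. The paper supplies no proof of it and only records two facts: that by \cite{Lub11} the Wiegold conjecture implies it when the Zariski closure of $\rho(\Aut(F_n))$ is connected, and that it fails for $n=2$ because $\Aut(F_2)$ is linear (via Dyer--Formanek--Grossman and the linearity of $B_4$), which is exactly the $n=2$ remark you make. So there is no proof in the paper to compare your sketch against, and your decision not to claim one is the right call.

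On the sketch itself: the reductions are sound. Killing the solvable radical and using that $\Inn(F_n)$ is the normal closure in $\Aut(F_n)$ of the single inner automorphism $c_{x_1}$ does reduce the conjecture to the one-element rigidity statement $\bar\rho(c_{x_1})=1$, although this asks for more than the conjecture needs (a finite image of $\Inn(F_n)$ in the semisimple quotient would also suffice, so a successful argument has slightly more room than your formulation suggests). Two caveats. First, the Formanek--Procesi poison subgroups are not solvable: the relevant configuration inside $\Aut(F_3)$ is of the form $(F_2\times F_2)\rtimes F_2$ (left and right partial transvections $x_3\mapsto wx_3$, $x_3\mapsto x_3w$ for $w\in\langle x_1,x_2\rangle$, extended by inner automorphisms), and the nonlinearity argument uses the nonabelian free factors in an essential way; describing them as solvable misstates the mechanism, even if it does not affect your overall plan. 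Second, as you say yourself, what that machinery certifies is only that \emph{some} nontrivial element dies in every finite-dimensional representation, with no control relating that element to $\Inn(F_n)$; step (i) of your plan is precisely the open point, and the property (T) results for $\Aut(F_n)$, $n\ge 4$, concern unitary representations and give no direct purchase on arbitrary linear ones. Your assessment of where the gap lies is accurate.
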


In fact, it was shown in \cite{Lub11} that Wiegold implies Conjecture \ref{conj_vs} when the Zariski closure of $\rho(\Aut(F_n))$ is connected (also see the more recent manuscript of Gelander \cite{Gel22}). Conjecture \ref{conj_vs} fails for $\Aut(F_2)$; in fact $\Aut(F_2)$ is known to have a faithful linear representation. Indeed, in \cite{DFG82}, Dyer, Formanek and Grossman showed that $\Aut(F_2)$ has a faithful representation if and only if the braid group $B_4$ does. At the time the problem for $B_4$ was an open problem, but later it was shown in \cite{Big01, Kram00, Kram02} that all braid groups are linear, via the so-called ``Lawrence-Krammer representation''. Consequently, $\Aut(F_2)$ is also linear.


In this paper we will use the related \emph{Burau representation} $\rho : B_4\ra\GL_3(\bZ[q,q^{-1}])$ to produce infinitely many finite simple groups as characteristic quotients of $F_2$, thereby proving Theorem \ref{thm_initial} and showing that baby Wiegold fails for $F_2$. A key tool is the strong approximation theorem of Weisfeiler and Pink, which allows us to deduce that the Burau representation is surjective onto the $k_0$-valued points of a certain algebraic group, where $k_0$ is an appropriate finite field quotient of the ring $\bZ[q+q^{-1}]$. A similar strategy was employed by Funar and Lochak \cite{FL18} to show that for every $g\ge 2$, the fundamental group $\Pi_g$ of a closed surface of genus $g$ has infinitely many characteristic finite simple quotients. For them, the Burau representation is replaced by quantum representations of mapping class groups.

The strategy using strong approximation is quite general, and tells us which types of finite simple groups we can expect to produce as characteristic quotients of $F_2$. However it requires us to discard an indeterminate finite set of primes and hence the method does not specify even a single group as a characteristic quotient of $F_2$. Because of this, we were motivated to prove the following more precise result:

\begin{thm}[see \S\ref{section_tiep}]\label{thm_main} Let $q$ be any prime power $\ge 7$. Then the finite simple groups $\PSL_3(\bF_{q})$ and $\PSU_3(\bF_{q})$ are both characteristic quotients of $F_2$. The finite simple groups $\PSU_3(\bF_4)$ and $\PSU_3(\bF_5)$ are also characteristic quotients of $F_2$. The cases $\PSL_3(\bF_q)$ $(q = 2,3,4,5)$ and $\PSU_3(\bF_q)$ $(q = 2,3)$ are not characteristic quotients of $F_2$.
\end{thm}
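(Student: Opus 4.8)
The plan is to deduce Theorem~\ref{thm_main} from a surjectivity statement for the specialized Burau representations coming from the Dyer--Formanek--Grossman correspondence, to prove that surjectivity by a uniform analysis of the maximal subgroups of $\SL_3(\bF_q)$ and $\SU_3(\bF_q)$, and to settle the finitely many small cases by a direct computation.

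The first step is the reduction. Since the reduced Burau representation $\rho\colon B_4\to\GL_3(\bZ[t,t^{-1}])$ restricts along the normal subgroup $F_2\lhd B_4$ to a representation through which every automorphism of $F_2$ is realized (this is the content of the earlier setup via \cite{DFG82}), for any specialization $t\mapsto\zeta$ into a finite field the kernel $N_\zeta$ of the resulting map $\bar\rho_\zeta\colon F_2\to\GL_3$ is a \emph{characteristic} subgroup of $F_2$. Choosing $\zeta\in\bF_q$ of order dividing $q-1$ (the linear specialization) forces $\bar\rho_\zeta(F_2)\le\SL_3(\bF_q)$, and choosing $\zeta\in\bF_{q^2}$ of order dividing $q+1$ (the unitary specialization) forces $\bar\rho_\zeta(F_2)\le\SU_3(\bF_q)$; in both cases the determinants are $1$ because the free generators of $F_2\lhd B_4$ are words of the form $\sigma_i\sigma_j^{-1}$ and $\det\rho(\sigma_i)=-t$ for every $i$. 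As the center of $\SL_3(\bF_q)$ (resp.\ $\SU_3(\bF_q)$) is characteristic in it, surjectivity of $\bar\rho_\zeta$ makes $\bar\rho_\zeta^{-1}(Z)$ a characteristic subgroup of $F_2$ with quotient $\PSL_3(\bF_q)$ (resp.\ $\PSU_3(\bF_q)$). So the positive part of the theorem reduces to: for each relevant $q$ one can choose $\zeta$ outside the finitely many $t$ at which the reduced Burau representation of $B_4$ becomes reducible, and with $\bar\rho_\zeta$ surjective.

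The heart of the proof is this surjectivity. Writing $H=\bar\rho_\zeta(F_2)$ inside $\SL_3(\bF_q)$ (resp.\ $\SU_3(\bF_q)$), I would show $H$ lies in no maximal subgroup by running through the Aschbacher classes, equivalently Mitchell's classification of subgroups of $\PSL_3$ and $\PSU_3$ (classes $\cC_4,\cC_7$ being vacuous in dimension $3$): (i) $H$ is absolutely irreducible at $\zeta$ — this is exactly where $\zeta$ must avoid the degeneracy locus — which excludes the reducible class $\cC_1$; (ii) for the monomial class $\cC_2$ (shape $(\text{torus}).S_3$) and the Singer-normalizer class $\cC_3$ (order $3(q^2\pm q+1)$) one exhibits explicit words $w\in F_2$ for which $\bar\rho_\zeta(w)$ has eigenvalue structure and order incompatible with either, using that the Burau generators are pseudo-reflections so that the spectra of short products are easy to compute; (iii) for the subfield class $\cC_5$ and the form-preserving classical subgroups ($\SO_3(\bF_q)\cong\PGL_2(\bF_q)$, and $\SU_3(\bF_{q_0})$ when $q$ is a square) one uses that $\zeta$ has been taken of the largest order the field allows — the $\bar\rho_\zeta$-image entries then generate $\bF_q$ (resp.\ $\bF_{q^2}$), $\bar\rho_\zeta$ preserves no symmetric bilinear form over a subfield, and $H$ contains an element whose order divides none of the relevant $|\SL_3(\bF_{q_0})|$, $|\SO_3(\bF_q)|$, $|\SU_3(\bF_{q_0})|$; (iv) the class-$\cC_6$ subgroups (normalizers of an extraspecial $3$-group) and the class-$\cS$ almost simple subgroups comprise finitely many isomorphism types whose orders are bounded independently of $q$, so for $q\ge 7$ it suffices to produce in $H$ a single element of order beyond all of their exponents — for instance an element of order $q^2\pm q+1$ or a large divisor of $q^2-1$ coming from one explicitly chosen braid word. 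Combining (i)--(iv) gives $H=\SL_3(\bF_q)$ (resp.\ $\SU_3(\bF_q)$). The two sporadic positive cases $\PSU_3(\bF_4)$ and $\PSU_3(\bF_5)$ go the same way: the short, explicitly known lists of maximal subgroups of $\SU_3(4)$ and $\SU_3(5)$ — notably $\PGL_2(5)$ and the three classes of $3{\cdot}A_7$ in $\SU_3(5)$ — are excluded by the same element-order bookkeeping (or by a direct machine check on the two presentations). For the negative cases, let $G$ be one of $\PSL_3(\bF_q)$ with $q\in\{2,3,4,5\}$ or $\PSU_3(\bF_q)$ with $q\in\{2,3\}$; we must show $\Aut(F_2)$ fixes no point of $\cN_2(G)$. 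Since $G$ is finite, $\cN_2(G)=\Epi(F_2,G)/\Aut(G)$ is finite and $\Aut(F_2)$ is generated by the three Nielsen automorphisms, so it suffices to verify, for each of the finitely many $\Aut(G)$-classes of generating pairs of $G$, that some Nielsen move sends it to a pair in a different $\Aut(G)$-class — a finite computation. For $\PSL_3(\bF_2)\cong\PSL_2(7)$ this is also subsumed by the abstract's final claim that no $\PSL_2(\bF_q)$ is a characteristic quotient of $F_2$, and $\PSU_3(\bF_2)$ is not even simple; conceptually, for each of these small $q$ the available specializations $\bar\rho_\zeta$ can only land in a proper subgroup (a parabolic at $\zeta$ with $(1+\zeta)(1+\zeta^2)=0$, the $t=-1$ homological copy of $\SL_2$, or a $3{\cdot}A_6$), which is the structural reason the construction cannot reach them.

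The step I expect to be the main obstacle is the uniformity in $q$ of (iii)--(iv): one must determine the orders of $\bar\rho_\zeta$-images of a fixed handful of braid words as explicit functions of $q$ and $\zeta$, and then choose $\zeta$ so that these orders simultaneously generate the correct field, exceed the bounded orders occurring in the $\cC_6$ and $\cS$ maximal subgroups, and avoid the Burau degeneracy locus, for every prime power $q\ge 7$ at once. The threshold $q\ge 7$ is precisely the point below which no such $\zeta$ exists, which is also what makes the small cases genuinely exceptional rather than a gap in the method.
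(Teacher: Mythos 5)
Your reduction step contains a genuine gap. You assert that the kernel of every specialization $\bar\rho_\zeta|_{F_2}$ is automatically \emph{characteristic} because the normal inclusion $F_2\lhd B_4$ realizes ``every automorphism of $F_2$.'' It does not: the Dyer--Formanek--Grossman exact sequence gives $B_4/Z(B_4)\cong\Aut^+(F_2)$, the index-two subgroup of automorphisms acting with determinant $+1$ on $F_2^\ab$, so conjugation inside $B_4$ only realizes $\Aut^+(F_2)$ and normality only makes the specialized kernels $\Aut^+(F_2)$-invariant (this is Proposition \ref{prop_Autp_invariant}). Upgrading to invariance under all of $\Aut(F_2)$ is a separate, nontrivial step: one must produce, for the determinant $-1$ automorphism $\alpha\colon(x,y)\mapsto(x^{-1},y)$, an automorphism of $\SU_{h_s}(k_0)$ carrying $(X,Y)$ to $(X^{-1},Y)$. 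Such an automorphism cannot be inner, since $\alpha$ does not preserve Burau traces (e.g.\ $\tr\rho(xyx^{-2}y^2)\ne\tr\rho(x^{-1}yx^2y^2)$); the paper's Proposition \ref{prop_bootstrap} constructs a non-inner one explicitly, as $B\mapsto\delta^{-1}JBJ\delta$ with $J$ the ``conjugation'' attached to the real structure spanned by the eigenbasis of $X$ and $\delta$ an explicit matrix centralizing $X$. Without this ingredient (or a substitute), your argument only produces $\Aut^+(F_2)$-invariant quotients, which is strictly weaker than the statement of the theorem.

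Apart from that, your route is essentially the paper's: specialize so that the eigenvalue $-t$ of $X$ has order $q\mp1$, then rule out every class of maximal subgroups of $\SL_3(\bF_q)$ and $\SU_3(\bF_q)$ (the paper uses the spectrum $\{1,-q,-q^{-1}\}$ and the maximal order of $X$ for $\cC_1$--$\cC_3$ and $\cC_6$, Zsigmondy primes and torus analysis for subfield and $\SU_3/\SL_3$ subgroups, and a Gram-matrix computation to exclude $\SO_3$), with the small positive cases and all the negative cases settled in GAP. One caveat about your closing claim: the uniform maximal-subgroup argument of Theorem \ref{thm_tiep} requires $q\ge 9$ and moreover excludes $q=7$ in the linear case and $q=8$ in the unitary case; those values, together with $\PSU_3(\bF_4)$ and $\PSU_3(\bF_5)$, are handled by explicit machine verification rather than by the word-order bookkeeping, so ``$q\ge 7$ is precisely the threshold of the uniform method'' overstates what that analysis yields.
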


Here, $\PSU_3(\bF_{q})$ denotes the projective special unitary group of a 3-dimensional Hermitian space over $\bF_{q^2}$ relative to the $\bF_{q}$-linear involution $a\mapsto a^{q}$. In the proof of Theorem \ref{thm_main}, the surjectivity provided by strong approximation is replaced by a careful analysis of the maximal subgroup structure\footnote{This was worked out by Mitchell and Hartley in the early 1900's and does not use the classification of finite simple groups.} of $\SL_3(\bF_q)$ and $\SU_3(\bF_q)$; moreover, these characteristic quotients are constructed using \emph{explicit} specializations of the Burau representation. We have checked using GAP \cite{GAP4} that $\PSU_3(\bF_4)$ is the smallest finite simple characteristic quotient of $F_2$.

An overview of the strong approximation strategy is given in Proposition \ref{prop_main}, where we give general conditions under which a representation of $\Aut(F_2)$ would give infinitely many concrete counterexamples to baby Wiegold for $n = 2$. We expect that there are many representations satisfying such conditions, though in this paper we only consider those coming from various specializations of the Burau representation of $B_4$. Other candidates include the Jones representation \cite{Jones87}, and the Lawrence-Krammer representation (see \cite{Big03} and \cite[\S3]{Sto10}).\footnote{In \cite{Big03} it is noted that the description of the Lawrence-Krammer representation in the earlier paper \cite{Big01} has a sign error.} In particular, the counterexamples $G$ which we get are all of bounded Lie rank. It would be interesting to find examples of unbounded rank. Another question of interest is whether there exist an alternating group $A_n$ which appears as a characteristic quotient of $F_2$. Our method cannot answer this last problem.

We will also show that $\PSL_2(\bF_q)$ is never a characteristic quotient of $F_2$ (Corollary \ref{cor_SL2}), so in some sense the quotients $\PSL_3(\bF_q),\PSU_3(\bF_q)$ that we find are the simplest finite simple groups of Lie type which appear as characteristic quotients of $F_2$. Our strategy for this is to show that the fixed points of the $\Out(F_2)$-action on the $\bF_q$-points of the character variety for $\SL_2$-representations of $F_2$ correspond to nonsurjective representations. This approach is made possible by the known integral structure of this character variety. A similar understanding for the $\SL_3$ and $\SU_3$-character varieties would in theory allow for a similar approach to Theorem \ref{thm_main}, though the equations would be notably more complicated \cite{Law07, Nak02, Sik01}, and it would not shed any light on where these fixed points come from.

Finally, our main theorem provides counterexamples to two conjectures in the theory of noncongruence modular curves \cite[Conjectures 4.4.1 and 4.4.15]{Chen18}. Let $\Aut^+(F_2)$ be the subgroup inducing determinant 1 automorphisms of the abelianization $\bZ^2$. The conjectures would assert that for nonsolvable groups $G$, the $\Aut^+(F_2)$-stabilizers of any surjection $\varphi : F_2\ra G$ should map to a \emph{noncongruence} subgroup of $\SL_2(\bZ)$. In \S\ref{section_noncongruence} we explain why this is a reasonable conjecture, and why our Theorem \ref{thm_main} provides counterexamples.

\subsection*{Acknowledgements}
This paper originated from discussions held at the Institute for Advanced Study in Princeton during the Fall of 2021. We thank the IAS for its hospitality. We also thank Gabriel Navarro for some helpful discussions, and Danny Neftin for his contributions during the early stages of the paper. We would also like to thank the referee for valuable remarks, which in particular inspired the results in \S\ref{section_markoff}.

\subsection*{Organization of the paper}
In \S\ref{section_burau}, we introduce the Burau representation of the braid group $B_4$ and the associated Hermitian form and unitary group, and show how it can be used to produce characteristic quotients of $F_2$.

In \S\ref{section_characteristic}, we describe Weisfeiler and Pink's strong approximation theorem, and show, following \cite{Lub11}, how it can be used to produce infinitely many finite linear groups as invariant quotients of a group $F$ from a single characteristic 0 representation of $\Aut(F)$. We apply this to the Burau representation to deduce Theorem \ref{thm_initial}.

In \S\ref{section_tiep}, we prove the stronger Theorem \ref{thm_main} by carefully choosing specializations of the Burau representation, exploting the known structure of the maximal subgroups of $\SL_3(\bF_q)$ and $\SU_3(\bF_q)$.

In \S\ref{section_markoff}, we will show that $\PSL_2(\bF_q)$ is never a characteristic quotient of $F_2$ (for any prime power $q$).

In \S\ref{section_noncongruence}, we describe how our main theorem provides counterexamples to two conjectures of \cite{Chen18} in the theory of noncongruence modular curves.

\section{Invariant quotients and the Burau representation}\label{section_burau}

\subsection{Generalities on invariant quotients}
Let $F$ be a normal subgroup of a group $A$. The conjugation action of $A$ on $F$ yields a homomorphism $h : A\ra\Aut(F)$. We say that a homomorphism of abstract groups $\varphi : F\lra G$ is \emph{$h(A)$-invariant} (or \emph{$A$-invariant}) if for every $\alpha\in h(A)\subset\Aut(F)$, $\varphi\circ\alpha = \sigma\circ \varphi$ for some $\sigma\in\Aut(G)$, and \emph{strictly $A$-invariant} if we can moreover take $\sigma\in\Inn(G)$. If $\varphi$ is surjective, then whenever $\sigma$ exists, it is uniquely determined by $\alpha$, in which case we will write $\sigma = \varphi_*(\alpha)$. For $A$-invariant quotients $\varphi$, the map
\begin{eqnarray*}
	\varphi_* : A & \lra & \Aut(G) \\
	\alpha & \mapsto & \sigma
\end{eqnarray*}
is a homomorphism.

If $h(A) = \Aut(F)$, then a homomorphism $\varphi : F\ra G$ is $A$-invariant if and only if $\ker(\varphi)$ is a characteristic subgroup of $F$; in this case we say that $\varphi$ is a characteristic homomorphism, or a characteristic quotient if $\varphi$ is moreover surjective. We will make use of the following criterion for producing $A$-invariant homomorphisms.
\begin{prop}\label{prop_IQ} With the above notation, if $G$ sits as a subgroup inside a group $\tilde{G}$ such that $\varphi : F\ra G$ extends to a map $\tilde{\varphi} : A\ra \tilde{G}$, then $\varphi$ is $A$-invariant. If $\varphi$ is surjective and $Z(G) = 1$, then the converse is also true. 
\end{prop}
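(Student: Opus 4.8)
The plan is to prove both directions directly from the definitions. For the forward direction, suppose $G \subseteq \tilde G$ and that $\varphi : F \to G$ extends to $\tilde\varphi : A \to \tilde G$. Given $\alpha \in h(A)$, choose $a \in A$ with $h(a) = \alpha$, so that $\alpha(x) = a x a^{-1}$ for all $x \in F$. Then for $x \in F$ we compute
\[
\varphi(\alpha(x)) = \tilde\varphi(\alpha(x)) = \tilde\varphi(a x a^{-1}) = \tilde\varphi(a)\,\tilde\varphi(x)\,\tilde\varphi(a)^{-1} = \tilde\varphi(a)\,\varphi(x)\,\tilde\varphi(a)^{-1}.
\]
Thus conjugation by the element $g := \tilde\varphi(a) \in \tilde G$ carries $\varphi(F) = G$ into $\tilde G$ and restricts on $G$ to the map $\varphi \circ \alpha \circ \varphi^{-1}$ wherever the latter makes sense; in particular $g G g^{-1} = \varphi(\alpha(F)) = \varphi(F) = G$ since $\alpha$ is an automorphism of $F$, so $\sigma := (x \mapsto g x g^{-1})|_G$ is an automorphism of $G$, and $\varphi \circ \alpha = \sigma \circ \varphi$. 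Hence $\varphi$ is $A$-invariant. (Note this argument does not even need $\varphi$ surjective.)

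For the converse, assume $\varphi$ is surjective, $Z(G) = 1$, and $\varphi$ is $A$-invariant; we must produce $\tilde G$ and $\tilde\varphi$. The natural candidate is $\tilde G := \Aut(G)$, into which $G \hookrightarrow \Aut(G)$ via $g \mapsto \inn_g$ (injective precisely because $Z(G) = 1$), identifying $G$ with $\Inn(G)$. Define $\tilde\varphi := \varphi_* \circ h : A \to \Aut(G)$, where $\varphi_* : h(A) \to \Aut(G)$ is the homomorphism from the discussion preceding the proposition (well-defined and a homomorphism because $\varphi$ is surjective and $A$-invariant). It remains to check that $\tilde\varphi$ extends $\varphi$, i.e. that $\tilde\varphi(x) = \inn_{\varphi(x)}$ for $x \in F$. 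For $x \in F$, $h(x) \in h(A)$ is the inner automorphism $\alpha_x : y \mapsto x y x^{-1}$ of $F$, and by definition $\varphi_*(\alpha_x) = \sigma$ where $\sigma \circ \varphi = \varphi \circ \alpha_x$. But $\varphi(\alpha_x(y)) = \varphi(x)\varphi(y)\varphi(x)^{-1} = \inn_{\varphi(x)}(\varphi(y))$ for all $y \in F$, and since $\varphi$ is surjective this forces $\sigma = \inn_{\varphi(x)}$. Hence $\tilde\varphi(x) = \varphi_*(h(x)) = \inn_{\varphi(x)}$, so under the identification $G = \Inn(G) \subseteq \Aut(G)$ we have $\tilde\varphi|_F = \varphi$, as required.

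I expect the only genuine subtlety — the "main obstacle," though it is minor — to be bookkeeping the role of the hypotheses $Z(G) = 1$ and surjectivity: surjectivity is what makes $\sigma$ (hence $\varphi_*$) well-defined and forces the identification $\sigma = \inn_{\varphi(x)}$ in the converse, while $Z(G) = 1$ is exactly what makes $G \hookrightarrow \Aut(G)$ a genuine embedding so that "$\tilde\varphi$ extends $\varphi$" is meaningful. One should also note that in the converse direction the chosen $\tilde G = \Aut(G)$ is essentially forced: any $\tilde G$ containing $G$ with the extension property would, via the conjugation action of $\tilde\varphi(A)$ on $G$, land in $\Aut(G)$ anyway. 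No nontrivial calculation is needed beyond the two displayed identities above.
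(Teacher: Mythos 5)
Your proof is correct and follows essentially the same route as the paper: the forward direction is the evident conjugation-by-$\tilde{\varphi}(a)$ argument, and for the converse you take $\tilde{G} = \Aut(G)$ with $G\cong\Inn(G)\hookrightarrow\Aut(G)$ and $\tilde{\varphi} = \varphi_*\circ h$, which is exactly the paper's construction. One caveat: your parenthetical claim that the forward direction needs no surjectivity is not borne out by your own argument, since the step $gGg^{-1} = \varphi(\alpha(F)) = \varphi(F) = G$ uses $\varphi(F) = G$; if $\varphi(F)\subsetneq G$, conjugation by $g$ only induces an automorphism of $\varphi(F)$, and such an automorphism need not extend to one of $G$ (e.g.\ the automorphism of $(\bZ/2)^2\subset\bZ/2\times\bZ/4$ swapping $(1,0)$ and $(0,2)$ extends to no automorphism of $\bZ/2\times\bZ/4$), so the $\sigma$ you produce need not lie in $\Aut(G)$. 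Since in all of the paper's applications $\varphi$ is surjective onto its codomain, this side remark does not affect the main argument.
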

\begin{proof} The forward direction is clear. For the converse, take $G\hookrightarrow \tilde{G}$ to be $G\cong\Inn(G)\hookrightarrow\Aut(G)$.	
\end{proof}

\subsection{The braid group $B_4$ and the free subgroup $F$}

Recall that the braid group $B_n$ on $n$ strands is the group generated by $\sigma_1,\ldots,\sigma_{n-1}$ subject to the relations
\begin{itemize}
\item $[\sigma_i,\sigma_j] = 1$	if $|i-j| \ge 2$, and
\item $\sigma_i\sigma_{i+1}\sigma_i = \sigma_{i+1}\sigma_i\sigma_{i+1}$ if $1\le i\le n-2$.
\end{itemize}
The center of $B_n$ is cyclic of infinite order generated by $(\sigma_1\sigma_2\cdots\sigma_{n-1})^n$, and has abelianization $\bZ$, given by the word length map $\ell : B_n\lra\bZ$ defined by $\ell(\sigma_i) = 1$ for each $i$.

Let $F_n$ denote the free group on the generators $x_1,\ldots,x_n$. There is a well known action of $B_n$ on $F_n$ given by $\sigma_i(x_1,\ldots,x_n) = (x_1,\ldots,x_{i-1},x_{i+1}, x_{i+1}^{-1}x_ix_{i+1}, x_{i+2},\ldots,x_n)$. 
In this paper we make use of a different representation, which seems to be exceptional to the case $n = 4$. From now on, we let $x,y,F$ denote the following objects inside $B_4$:
$$x := \sigma_1\sigma_3^{-1},\qquad y := \sigma_2\sigma_1\sigma_3^{-1}\sigma_2^{-1},\qquad F := \langle x,y\rangle$$
Then $x,y$ are free generators of $F$, and $F$ is a \emph{normal} subgroup of $B_4$ \cite[p406]{DFG82}. The action of $B_4$ on $F$ by conjugation defines a homomorphism
\begin{eqnarray*}
\xi : B_4 & \lra & \Aut(F) \\
 \sigma_1 & \mapsto & (x,y)\mapsto (x,yx^{-1}) \\
 \sigma_2 & \mapsto & (x,y)\mapsto (y,yx^{-1}y) \\
 \sigma_3 & \mapsto & (x,y)\mapsto (x, x^{-1}y)
\end{eqnarray*}
The image of $\xi$ is the group of automorphisms of $F$ which act with determinant 1 on the abelianization $F^\ab\cong\bZ^2$, denoted $\Aut^+(F)$, and the kernel is exactly the center $Z(B_4)$ \cite[p406]{DFG82}. Thus we have an exact sequence
\begin{equation}\label{eq_B4}
1\lra Z(B_4)\hookrightarrow B_4\stackrel{\xi}{\lra}\Aut^+(F)\lra 1	
\end{equation}

where we recall that $Z(B_4) = \langle(\sigma_1\sigma_2\sigma_3)^4\rangle\cong\bZ$. In terms of invariant quotients, we have

\begin{prop}\label{prop_Autp_invariant} Let $\varphi : B_4\ra G$ be a homomorphism of groups. Then the restriction $\varphi|_F : F\ra \varphi(F)$ is $\Aut^+(F)$-invariant.	
\end{prop}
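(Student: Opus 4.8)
The plan is to reduce the statement to the forward implication of Proposition \ref{prop_IQ}, applied with the ambient group taken to be the image $\varphi(B_4)$ itself.

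First I would unwind what "$\Aut^+(F)$-invariant" means in this situation. Put $A = B_4$; since $F \lhd B_4$, the conjugation action gives a homomorphism $h : A \to \Aut(F)$, and by construction this is exactly the map $\xi$ of \pref{eq_B4}, whose image is $\Aut^+(F)$. So $h(A) = \Aut^+(F)$, and proving that $\varphi|_F : F \to \varphi(F)$ is $\Aut^+(F)$-invariant is literally the same as proving that $\varphi|_F$ is $A$-invariant in the sense of the generalities of this section.

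Then I would set $G := \varphi(F)$, which sits inside $\tilde G := \varphi(B_4)$, and observe that the homomorphism $\varphi : A = B_4 \to \tilde G$ restricts on $F$ to $\varphi|_F$ (composed with the inclusion $\varphi(F) \hookrightarrow \tilde G$). That is precisely the hypothesis of Proposition \ref{prop_IQ}, which then immediately yields that $\varphi|_F$ is $A$-invariant, i.e.\ $\Aut^+(F)$-invariant. If one prefers an explicit argument in place of citing Proposition \ref{prop_IQ}: given $\alpha \in \Aut^+(F)$, choose $b \in B_4$ with $\xi(b) = \alpha$, so $\alpha(f) = bfb^{-1}$ for all $f \in F$; conjugation by $\varphi(b)$ sends $\varphi(F)$ to $\varphi(bFb^{-1}) = \varphi(F)$ — here one uses crucially that $F$ is \emph{normal} in $B_4$ — hence restricts to an automorphism $\sigma$ of $\varphi(F)$, and $\varphi(\alpha(f)) = \varphi(b)\varphi(f)\varphi(b)^{-1} = \sigma(\varphi(f))$, as required.

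I do not expect any genuine obstacle here: the entire content lies in two bookkeeping observations, namely that (i) the conjugation map $B_4 \to \Aut(F)$ has image exactly $\Aut^+(F)$, so that the conclusion is phrased for the correct automorphism group, and (ii) the normality of $F$ in $B_4$ is precisely what forces conjugation by elements of $\varphi(B_4)$ to preserve the subgroup $\varphi(F)$ and hence to induce honest automorphisms of it. Everything else is immediate from the definitions.
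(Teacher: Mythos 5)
Your proof is correct and is essentially the paper's own (implicit) argument: the paper states Proposition \ref{prop_Autp_invariant} as an immediate consequence of the forward direction of Proposition \ref{prop_IQ}, using exactly the facts you isolate, namely that $\xi(B_4)=\Aut^+(F)$ by \eqref{eq_B4} and that normality of $F$ in $B_4$ makes conjugation by $\varphi(b)$ restrict to an automorphism of $\varphi(F)$. Nothing is missing; your explicit verification matches what the paper leaves to the reader.
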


\subsubsection{Remarks on canonical locally constant sheaves on surfaces}\label{sss_canonical} Geometrically, Proposition \ref{prop_Autp_invariant} implies that representations of $B_4$ correspond to \emph{canonical} geometric objects on punctured elliptic curves. We first explain this for covering spaces. Let $S_g$ be a closed surface of genus $g$, and let $S_{g,n}$ be an $n$-punctured $S_g$. For $g = 1$, the fundamental group $\pi_1(S_{1,1})$ is free of rank 2, and we will identify $\pi_1(S_{1,1})$ with $F$. By covering space theory, the map $\varphi|_F : F\ra \varphi(F)$ determines a $\varphi(F)$-Galois covering space $C\ra S_{1,1}$; here, $\varphi|_F$ is the \emph{monodromy representation} attached to $C\ra S_{1,1}$. If $S_{1,1}'$ is another punctured torus, then by choosing an orientation-preserving homeomorphism $f : S_{1,1}'\ra S_{1,1}$, we can pull back $C$ to obtain a $\varphi(F)$-Galois covering $f^*C$ of $S_{1,1}'$, and the isomorphism class of the pullback $f^*C$ depends only the homotopy class of $f$. The group of homotopy classes of orientation-preserving self-homeomorphisms of $S_{1,1}$ is the \emph{mapping class group} $\Gamma(S_{1,1})$. If the isomorphism class of $C$ is invariant under pulling back by elements of $\Gamma(S_{1,1})$, then the isomorphism class of $f^*C$ is \emph{independent} of the choice of homeomorphism $f$. At the level of the homomorphism $\varphi|_F$, this is equivalent to the condition that for any $\gamma\in\Gamma(S_{1,1})$, there exists a $\sigma\in\Aut(\varphi(F))$ such that
$$\psi\circ\gamma_*\sim \sigma\circ\psi$$
where $\gamma_*$ denotes the outer automorphism of $F\cong\pi_1(S_{1,1})$ induced by $\Gamma(S_{1,1})$, and $\sim$ denotes ``is conjugate by some element of $G$''. Since the outer action of $\Gamma(S_{1,1})$ on $F$ factors through $\Out^+(F)$, Proposition \ref{prop_Autp_invariant} implies that this condition is satisfied\footnote{In fact, the outer representation $\Gamma(E^\circ)\ra\Out^+(F)$ is an \emph{isomorphism} \cite[\S8.2.7]{FM11}.}. Thus the existence of the homomorphism $\varphi : B_4\ra G$ implies that every punctured torus (in particular, every punctured elliptic curve) has a \emph{canonical} $\varphi(F)$-Galois cover.

If $G\le\GL_n(k)$ is a linear group over some ring $k$, then $\varphi|_F$ can also be viewed as a $k$-local system of rank $n$, and the same argument implies that every elliptic curve has a \emph{canonical} $k$-local system of rank $n$. The relation between the two perspectives is that covering spaces and local systems are both examples of \emph{locally constant sheaves}\footnote{In general, for a connected and locally simply connected topological space $X$, locally constant sheaves on $X$ are determined by representations of the fundamental group $\rho : \pi_1(E^\circ)\ra\Aut(S)$ for some object $S$. If $S$ is a set, $\Aut(S)$ is a permutation group, and $\rho$ corresponds to a covering space (a locally constant sheaf of sets). If $S$ is a group, $\rho$ corresponds to an $S$-local system (a locally constant sheaf of groups). See \cite[\S2.5-2.6]{Sza09}.}

In genus $g\ge 2$, the mapping class group $\Gamma(S_{g,1})$ plays the role for $S_g$ that $B_4$ did for $S_{1,1}$. In particular, there is an isomorphism $\Gamma(S_{g,1})\cong\Aut^+(\pi_1(S_g))$ \cite[p235]{FM11}, and hence representations of $\Gamma(S_{g,1})$ correspond to canonical locally constant sheaves on $S_g$. This gives a geometric interpretation to a result of Funar and Lochak \cite[Theorem 1.4]{FL18}.

\subsection{The unitary group}\label{sss_unitary_group}

Let $R$ denote the ring $\bZ[q,q^{-1}]$, and let $R_0 := \bZ[q+q^{-1}]\subset R$.

\begin{defn}[The Hermitian form $h$ {\cite[\S3]{Ven14}}]\label{def_hermitian} The ring $R = \bZ[q,q^{-1}]$ admits an involution given by the unique $\bZ$-algebra automorphism sending $q\mapsto q^{-1}$. We will often denote this involution by $r\mapsto \ol{r}$ for $r\in R$; we will also sometimes call it \emph{conjugation}. The subring of fixed points under the involution is $R_0 = \bZ[q+q^{-1}]$. Since $R \cong R_0[t]/(t^2-(q+q^{-1})t + 1)$, $R$ is a free $R_0$-module of rank 2. Let $h$ denote the sesquilinear form antilinear in the second coordinate (relative to the involution) on the free $R$-module $R^3$ given by the matrix
$$H := \thmatrix{\frac{(q+1)^2}{q}}{-q-1}{0}{-q^{-1}-1}{\frac{(q+1)^2}{q}}{-q-1}{0}{-q^{-1}-1}{\frac{(q+1)^2}{q}}$$
Thus we have $h(v,w) = v^tH\ol{w}$ for all $v,w\in R^3$. Since $H^t = \ol{H}$, we see that $h$ is a Hermitian form.
\end{defn}

The matrix $H$ has determinant \cite[Lemma 13]{Ven14}
\begin{equation}\label{eq_det_of_H}
\det(H) = \left(\frac{q+1}{q}\right)^3\frac{q^4-1}{q-1}	
\end{equation}

The unitary group for $h$ is by definition the group of isometries of the Hermitian space $(R^3,h)$. As a group scheme, this group should be understood as a subgroup of the restriction of scalars $\Res_{R/R_0}\GL_{3,R}$:
\begin{defn}\label{defn_Uh} The unitary group $\U_h$ is the group scheme over $R_0$ whose $B$-points (for any $R_0$-algebra $B$) is
$$\U_h(B) = \{g\in \GL_3(R\otimes_{R_0} B)\;|\; h(gv,gw) = h(v,w)\text{ for all $v,w\in (R\otimes_{R_0} B)^3$}\}$$
Here, the involution on $R\otimes_{R_0}B$ is induced from $R/R_0$. Let $\SU_h$ be the subgroup scheme consisting of matrices with determinant 1. If $B$ is any $R_0$ algebra, let $\U_{h,B} := \U_h\times_{R_0}B$ denote the base change of $\U_h$ to $B$. For more details on unitary groups, see \cite[\S3]{Ven14}, \cite[\S2.3.3]{PR94}, and \cite[Exercise 4.4.5]{Con14}
\end{defn}

Recall that the standard unitary group $U(n)$ can be viewed as a real form of $\GL_n$. Here is the analogous result in our case:

\begin{prop}\label{prop_split_prime} Let $k$ be an $R$-algebra which is a field, such that the images of $q,q^{-1}$ are distinct in $k$. The base change of $\U_h$ to the $R_0$-algebra $k$ is isomorphic to $\GL_{3,k}$, and the base change of $\SU_h$ to $k$ is isomorphic to $\SL_{3,k}$. In particular, this applies to $k = \bQ(q)$.
\end{prop}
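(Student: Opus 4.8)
The plan is to exploit the fact that $R\otimes_{R_0}k$ is a \emph{split} rank-$2$ étale $k$-algebra, namely $k\times k$, which forces the unitary group of $h$ to collapse to $\GL_{3,k}$. Since $q$ is a unit of $R$ with $q\cdot q^{-1}=1$, we have $R=R_0[t]/(t^2-(q+q^{-1})t+1)$ as in Definition \ref{def_hermitian}, so that
\[
R\otimes_{R_0}k \;=\; k[t]/\bigl((t-\ol{q})(t-\ol{q}^{-1})\bigr),
\]
where $\ol{q}\in k$ denotes the image of $q$ (and $\ol{q^{-1}}=\ol{q}^{-1}$ since $R\to k$ is a ring map). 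By hypothesis $\ol{q}\neq\ol{q}^{-1}$, so the two linear factors are comaximal and the Chinese Remainder Theorem gives an isomorphism $R\otimes_{R_0}k\xrightarrow{\sim}k\times k$ carrying $q\otimes 1$ to $(\ol{q},\ol{q}^{-1})$.

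First I would pin down the induced involution. Conjugation $r\mapsto\ol{r}$ on $R$ is $R_0$-linear, so it base changes to a $k$-algebra involution on $R\otimes_{R_0}k$; since it interchanges the two roots $q,q^{-1}$ of the defining polynomial, under the identification above it is exactly the flip $\tau(a,b)=(b,a)$ on $k\times k$. Hence for every $k$-algebra $B$ one has $R\otimes_{R_0}B\cong B\times B$ with the flip involution, so that $\GL_3(R\otimes_{R_0}B)\cong\GL_3(B)\times\GL_3(B)$ with conjugation acting entrywise as $(g_1,g_2)\mapsto(g_2,g_1)$. Writing the image of the Hermitian matrix $H$ as $(H_1,H_2)\in M_3(k)\times M_3(k)$, the relation $H^t=\ol{H}$ forces $H_2=H_1^{\,t}$, and $H_1\in\GL_3(k)$ because $H$ is invertible over $k$ by the determinant formula \eqref{eq_det_of_H} (here $q\neq -1$, since $q\neq q^{-1}$).

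With these identifications the isometry condition $g^tH\,\ol{g}=H$ for $g=(g_1,g_2)\in\GL_3(B)\times\GL_3(B)$ becomes the pair of equations $g_1^tH_1g_2=H_1$ and $g_2^tH_1^{\,t}g_1=H_1^{\,t}$; the second is the transpose of the first, hence redundant, and the first determines $g_2=H_1^{-1}(g_1^{-1})^tH_1$ uniquely from $g_1$. Thus $(g_1,g_2)\mapsto g_1$ is an isomorphism of group-valued functors $\U_h\xrightarrow{\sim}\GL_3$ on $k$-algebras; as both sides are affine $k$-group schemes and this assignment is implemented by explicit mutually inverse ring homomorphisms, it is an isomorphism $\U_{h,k}\cong\GL_{3,k}$ of group schemes over $k$. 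Moreover $\det g=(\det g_1,\det g_2)$ with $\det g_2=(\det g_1)^{-1}$, so $\det g=1$ in $R\otimes_{R_0}B\cong B\times B$ is equivalent to $\det g_1=1$; hence the isomorphism restricts to $\SU_{h,k}\cong\SL_{3,k}$. For the final assertion, $k=\bQ(q)$ is a field receiving $R$ with $q\neq q^{-1}$, so the general statement applies.

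I do not expect a serious obstacle: this is the standard computation that the unitary group of a Hermitian form over a split quadratic étale algebra is a general linear group. The points requiring care are transporting the $R/R_0$-conjugation to the flip on $k\times k$, bookkeeping the entrywise action of conjugation so that $H$ and the unitary equations split across the two factors, and noting that the special-unitary condition --- a priori a constraint in $B\times B$ --- reduces to a single determinant-one equation in the first factor. The only piece of genuine content is the nondegeneracy of $H$ over $k$, which is supplied by \eqref{eq_det_of_H}.
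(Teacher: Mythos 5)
Your argument is, step for step, the paper's own proof: split $R\otimes_{R_0}B\cong B\times B$ with the flip involution, note that $H$ maps to $(H_1,H_1^t)$, reduce the unitary condition to the single equation $g_1^tH_1g_2=H_1$, solve for $g_2$ in terms of $g_1$, and track determinants for the $\SU_h$ statement.

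The one place you go beyond the paper is also the one place your justification is wrong. You assert $H_1\in\GL_3(k)$ ``because $H$ is invertible over $k$ by \eqref{eq_det_of_H} (here $q\neq-1$, since $q\neq q^{-1}$).'' But $\det H=(q+1)^4(q^2+1)/q^3$, so $H_1$ is also singular whenever $q$ maps to a primitive fourth root of unity (e.g. $k=\bQ(i)$ with $q\mapsto i$, or $k=\bF_5$ with $q\mapsto 2$), a case allowed by the hypothesis $q\neq q^{-1}$ and explicitly flagged as degenerate in the Remark following the proposition. In that case the step ``$g_2=H_1^{-1}(g_1^{-1})^tH_1$'' is unavailable: $g_2$ is not determined by $g_1$, and not every $g_1\in\GL_3(B)$ even admits a companion $g_2$ (one needs $g_1^t$ to preserve the column space of $H_1$), so the first projection is no longer an isomorphism onto $\GL_3$; indeed a tangent-space count at the identity shows the group cut out by $g_1^tH_1g_2=H_1$ has dimension $10$ when $\rank H_1=2$, so this route cannot be repaired there. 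To be fair, the paper's proof writes $H^{-1}$ with no comment and has exactly the same restriction, so your argument is neither broader nor narrower than the published one; but your parenthetical gives a false reason for invertibility, and as written the proof only covers the specializations with $q+q^{-1}\neq 0$ in $k$ (equivalently $\det H\neq 0$), not the full range of the stated hypothesis. Either record that restriction explicitly or handle the $q^2=-1$ case by a separate argument.
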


\begin{remark} The assumption that $q,q^{-1}$ are distinct in $k$ is equivalent to assuming that $q\ne\pm1\in k$. In particular, the result still holds if $q$ maps to a primitive 4th root of unity, even though $h$ is degenerate in this case.
\end{remark}

\begin{proof} 
For any $R_0$-algebra $B$, $\U_{h,R_0}(B)$ is the subgroup of matrices $g\in\GL_3(B\otimes_{R_0}R)$ satisfying $g^tH\ol{g} = H$, where the conjugation on $g$ comes from the conjugation on $B\otimes_{R_0}R$ which in turn is induced from that of $R$. Similarly, the elements of $\SU_{h,R_0}(B)$ are the matrices in $\SL_3(B\otimes_{R_0}R)$ satisfying the same relation. If $B$ is any $k$-algebra and $q_B$ denotes the image of $q$ in $B$, then by our assumption on $k$, we have $q_B\ne q_B^{-1}$. It follows that $B\otimes_{R_0} R\cong B\times B$, with conjugation given by $(x,y)\mapsto (y,x)$. Writing $g\in\GL_3(B\times B) = \GL_3(B)\times\GL_3(B)$ as $(g_1,g_2)$, the relation $g^tH\ol{g} = H$ becomes $(g_1^t,g_2^t)(H,\ol{H})(g_2,g_1) = (H, \ol{H})$. This implies $g_1^tHg_2 = H$, and $g_2^t\ol{H}g_1 = \ol{H}$, but since $\ol{H} = H^t$, these two relations are the same, so we are left with the single relation $g_1^tHg_2 = H$. Thus we can take $g_1$ to be anything in $\GL_3(B)$, and then $g_2$ is forced to be $H^{-1}g_1^{-t}H$.	 This shows that $\U_{h,k}(B) = \GL_3(B)$ and $\SU_{h,k}(B) = \SL_3(B)$ as desired.
\end{proof}

\subsection{The representation}\label{ss_the_representation}

The Burau representation arises from the monodromy action on the middle homology of a certain manifold, which is isomorphic to $R^3$ as abelian groups. The Hermitian form $h$ is constructed using the intersection form on this homology group, relative to which the Burau representation acts via isometries. An explicit description is given below. See \cite{Ven14}, \cite[\S2.2]{Sto10} for more details.

\begin{defn} The Burau representation for $B_4$ is the representation
$$\rho := \rho_{B,4} : B_4\lra\U_h(R_0)\subset \GL_3(R)$$
given by
$$(\sigma_1,\sigma_2,\sigma_3)\quad\mapsto\quad\left(\thmatrix{-q}{1}{0}{0}{1}{0}{0}{0}{1},\thmatrix{1}{0}{0}{q}{-q}{1}{0}{0}{1},\thmatrix{1}{0}{0}{0}{1}{0}{0}{q}{-q}\right)$$
\end{defn}
One can easily calculate the following:
$$\rho(x) = \thmatrix{-q}{1}{0}{0}{1}{0}{0}{1}{-q^{-1}},\quad \rho(y) = \thmatrix{1-q}{-q^{-1}}{q^{-1}}{1-q^2}{-q^{-1}}{0}{1}{-q^{-1}}{0}$$
We note that $\sigma_1,\sigma_2,\sigma_3$ are all conjugate in $B_4$, and hence $x = \sigma_1\sigma_3^{-1}$ and its conjugate $y = \sigma_2\sigma_1\sigma_3^{-1}\sigma_2^{-1}$ must have determinant 1 under any representation. In particular, the Burau representation restricts to a representation
$$\rho|_F : F\lra \SU_h(R_0)$$
If $k$ is any $\bZ[q,q^{-1}]$-algebra where $q+1$ is invertible, the vectors
$$v_1 := \ctvector{1}{q+1}{q},\quad v_2 := e_1 = \ctvector{1}{0}{0},\quad v_3 := e_3 = \ctvector{0}{0}{1}$$
give an eigenbasis of $k^3$ for $\rho(x)$ with corresponding eigenvalues $1,-q,-q^{-1}$ respectively. It follows that $\rho(y)$ has the same eigenvalues, with eigenbasis $\sigma_2v_1,\sigma_2v_2,\sigma_2v_3$. Since $\rho(x)$ has distinct eigenvalues over $\bQ(q)$, $\{v_1,v_2,v_3\}$ is orthogonal for $h$ in the sense that $h(v_i,v_j) = 0$ if $i\ne j$. Let $O := [v_1\; v_2\; v_3]$, and let $k$ be any $\bZ[q,q^{-1}]$-algebra where $q+1$ is invertible. Then $\{v_1,v_2,v_3\}$ is a basis of $k^3$, and relative to this basis, the Hermitian form $h$ on $k^3$ is given by the matrix
$$D := O^tH\ol{O} = \thmatrix{(q+q^{-1})^2 + 2(q+q^{-1})}{0}{0}{0}{\frac{(q+1)^2}{q}}{0}{0}{0}{\frac{(q+1)^2}{q}}$$

\subsection{Bootstrapping $\Aut^+(F)$-invariance to $\Aut(F)$-invariance}
For an $R_0$-algebra $k_0$, let $\rho_{k_0}$ denote the composition
$$\rho_{k_0} : B_4\stackrel{\rho}{\lra} U_h(R_0)\ra U_h(k_0)$$
where $\rho$ is the Burau representation. Via Proposition \ref{prop_Autp_invariant}, for every $R_0$-algebra $k_0$, $\rho_{k_0}$ gives rise to the $\Aut^+(F)$-invariant map
$$\rho_{k_0}|_F : F\lra \SU_h(k_0)$$
We will show that if $q+1$ is invertible in $k_0\otimes_{R_0}R$, then $\rho_{k_0}|_F$ is in fact characteristic, see Proposition \ref{prop_bootstrap} below. Since the unique nontrivial coset of $\Aut(F)/\Aut^+(F)$ consists of automorphisms of determinant $-1$, it suffices to show that for the automorphism $\alpha\in\Aut(F)$ given by
$$\alpha(x) = x^{-1},\quad\alpha(y) = y,$$
there exists an automorphism $\ol{\alpha}\in\Aut(\SU(k_0))$ such that
$$\rho_{k_0}|_F\circ\alpha = \ol{\alpha}\circ\rho_{k_0}|_F$$
If $X := \rho_{k_0}(x)$ and $Y := \rho_{k_0}(y)$, then we wish to find an automorphism $\ol{\alpha}$ of $\SU_h(k_0)$ sending $(X,Y)\mapsto (X^{-1},Y)$.

The easiest situation is if one can take $\ol{\alpha}$ to be inner, but it turns out this is impossible, because $\alpha$ fails to preserve traces\footnote{The shortest word $w\in F$ we could find for which $\tr\rho(w)\ne\tr\rho(\alpha(w))$ is $w = xyx^{-2}y^2$. In this case $\tr(\rho(xyx^{-2}y^2)) = q^5-2q^4+q^3+q^2-4q+4-2q^{-1}-q^{-2}+2q^{-3}-q^{-4}$ but
$\tr(\rho(x^{-1}yx^{2}y^2)) = -q^4+2q^3-q^2-2q+4-4q^{-1}+q^{-2}+q^{-3}-2q^{-4}+q^{-5}$.}; thus we are led to consider non-inner $\ol{\alpha}$. Over $\bC$, $\Out(\SU_h(\bC))\cong\bZ/2$, the nontrivial class being represented by ``complex conjugation with respect to a real structure''.\footnote{Since $\SU_h$ is simple and simply connected, its outer automorphisms come from automorphisms of Dynkin diagram. Since its Dynkin diagram is a line, we have $\Out(\SU_h)\cong\bZ/2$.} To describe the desired automorphism $\ol{\alpha}\in\SU_h(k_0)$, we begin by describing the arithmetic version of this conjugation.

Let $k := k_0\otimes_{R_0} R$, so $k$ is a free $k_0$-module of rank 2, and $k_0\subset k$ is the subalgebra fixed by the involution. We make the assumption that \emph{$q+1$ is invertible in $k$.} Then the vectors $\{v_1,v_2,v_3\}$ described in \S\ref{ss_the_representation} give an eigenbasis in $k^3$ for $X$. This is an orthogonal basis of $k^3$ relative to the Hermitian form $h$. Let $I := q-q^{-1}\in k$, and let $V_0$ be the $k_0$-span of $\{v_1,v_2,v_3\}$; this is a real structure in the sense that $h$ takes values in $k_0$ when restricted to $V_0$. Thus $k^3 = V_0\oplus IV_0$. Define $J$ to be the $k_0$-linear automorphism of $k^3$ given by
\begin{equation}\label{eq_J}
J := \id_{V_0}\oplus \left(-\id_{IV_0}\right)
\end{equation}
In particular, $Jv_i = v_i$ and $JIv_i = -Iv_i$ for all $1\le i\le 3$. Thus, $J$ is the analog of ``complex conjugation'' relative to the real structure $V_0$. Define $\sigma_J\in\Aut(U_h(k_0))$ by
$$\sigma_J(B) = JBJ^{-1} = JBJ$$
Then $\sigma_J$ represents a nontrivial class of $\Out(U_h(k_0))$.\footnote{More generally, every real structure $V_0$ on the Hermitian space $(k^3,h)$ determines, via this construction, an automorphism $\sigma_J$ of $\SU_h(k_0)$.} Since $v_1,v_2,v_3$ is an eigenbasis of $X$ with eigenvalues $1,-q,-q^{-1}$ respectively, we find that $v_1,v_2,v_3$ are eigenvectors of $JXJ$ with eigenvalues $1,-q^{-1},-q$, so $\sigma_J(X) = JXJ = X^{-1}$. It is not quite true that $\sigma_J(Y) = Y$, but it turns out this can be fixed by composing by the inner automorphism given by a suitable matrix centralizing $X$.

\begin{prop}\label{prop_bootstrap} Let $k_0$ be any $R_0$-algebra such that $q+1$ is invertible in $k := k_0\otimes_{R_0}R$. Let $J$ be the $k_0$-linear endomorphism of $k^3$ described in \eqref{eq_J}. Let $\delta$ be the matrix
$$\delta = \thmatrix{q^{-1}}{-q^{-2}}{0}{0}{-q^{-2}}{0}{0}{-q^{-2}}{q^{-3}}$$
Let $\ol{\alpha}$ be the automorphism of $\U_h(k_0)\subset\GL_3(k)$ given by
$$\ol{\alpha}(B) = \delta^{-1}JBJ\delta\quad\text{for $B\in \U_h(k_0)$}$$
Then $\ol{\alpha}(X) = X^{-1}$ and $\ol{\alpha}(Y) = Y$. In particular, $\rho_{k_0}|_F\circ\alpha = \ol{\alpha}\circ\rho_{k_0}|_F$, and $\rho_{k_0}|_F : F\ra\SU(k_0)$ is characteristic.
\end{prop}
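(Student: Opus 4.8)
The plan is to reduce everything to checking the two matrix identities $\ol{\alpha}(X)=X^{-1}$ and $\ol{\alpha}(Y)=Y$ in $\GL_3(k)$, and then to read off the remaining assertions formally. Granting these identities: since $x,y$ freely generate $F$ and $\rho_{k_0}|_F$ is a homomorphism, the two homomorphisms $\rho_{k_0}|_F\circ\alpha$ and $\ol{\alpha}\circ\rho_{k_0}|_F$ from $F$ to $\SU_h(k_0)$ agree on $x$ (both send it to $X^{-1}$) and on $y$ (both send it to $Y$), hence coincide; this is the equality $\rho_{k_0}|_F\circ\alpha=\ol{\alpha}\circ\rho_{k_0}|_F$. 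Moreover $\alpha$ induces a determinant $-1$ automorphism of $F^\ab\cong\bZ^2$, so it lies outside the index-$2$ subgroup $\Aut^+(F)=\xi(B_4)$; hence $\alpha$ together with $\Aut^+(F)$ generates all of $\Aut(F)$. Combining the $\alpha$-invariance just obtained with the $\Aut^+(F)$-invariance coming from Proposition~\ref{prop_Autp_invariant}, we get that $\rho_{k_0}|_F$ is $\Aut(F)$-invariant, i.e.\ $\ker(\rho_{k_0}|_F)$ is characteristic in $F$, i.e.\ $\rho_{k_0}|_F$ is a characteristic homomorphism.

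The observation that makes $\ol{\alpha}$ tractable is that $\delta=-q^{-2}\rho(x)$, so $\delta$ is a unit scalar times $X=\rho_{k_0}(x)\in\SU_h(k_0)$. Consequently conjugation by $\delta$ is the inner automorphism $\Ad(X)$, and $\ol{\alpha}=\Ad(X^{-1})\circ\sigma_J$; in particular $\ol{\alpha}$ is a genuine automorphism of $\U_h(k_0)$ (the composite of the automorphism $\sigma_J$ from the discussion preceding the statement with an inner one), and it restricts to $\SU_h(k_0)$ since $\det\sigma_J(B)=\ol{\det B}$. The first identity is then immediate: $\delta$ centralizes $X$, so $\ol{\alpha}(X)=\delta^{-1}\sigma_J(X)\delta=\sigma_J(X)$, and $\sigma_J(X)=JXJ=X^{-1}$ was already verified before the statement using the $X$-eigenbasis $v_1,v_2,v_3\in V_0$: these vectors are $J$-fixed, so $\sigma_J(X)$ has them as eigenvectors with the conjugated eigenvalues $1,-q^{-1},-q$, which are exactly the eigenvalues of $X^{-1}$, and the $v_i$ form a basis of $k^3$.

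The real content is the identity $\ol{\alpha}(Y)=Y$, equivalently — using $\delta=-q^{-2}X$ — the identity $\sigma_J(Y)=XYX^{-1}$ in $\GL_3(k)$. Here I would first make $\sigma_J(Y)=JYJ$ explicit. Since $J$ is conjugate-linear and fixes each $v_i$, in the basis $\{v_1,v_2,v_3\}$ the operator $JYJ$ is represented by the entrywise conjugate of the matrix of $Y$ in that basis; equivalently, with $O:=[v_1\;v_2\;v_3]$ and $P:=O\ol{O}^{-1}$ (so that $\ol{P}=P^{-1}$), one has $\sigma_J(Y)=P\,\ol{Y}\,P^{-1}$, where $\ol{Y}$ is obtained from $Y=\rho(y)$ by the substitution $q\leftrightarrow q^{-1}$. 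The identity to be checked thus becomes $(X^{-1}P)\,\ol{Y}=Y\,(X^{-1}P)$, a system of polynomial identities over $\bZ[q,q^{-1},(q+1)^{-1}]$ that I would verify by a direct $3\times 3$ matrix computation from the explicit $X=\rho(x)$, $Y=\rho(y)$, and $O$ recorded in \S\ref{ss_the_representation}; the case of a general $R_0$-algebra $k_0$ in which $q+1$ is invertible then follows by base change, since every matrix in sight is the image of the corresponding matrix over $\bZ[q,q^{-1},(q+1)^{-1}]$. This computation is the only real obstacle, and it is precisely where the entries of $\delta$ enter: the correction term $X^{-1}$ (equivalently $\delta$), which lies in the centralizer of $X$, is exactly what one obtains by solving the constraint $\sigma_J(Y)=XYX^{-1}$ for an element centralizing $X$.
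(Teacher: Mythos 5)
Your proposal is correct, and at bottom it is the same proof as the paper's: everything reduces to the two matrix identities $\ol{\alpha}(X)=X^{-1}$, $\ol{\alpha}(Y)=Y$, with the formal deduction of $\Aut(F)$-invariance (freeness of $F$ on $x,y$, plus the fact that $\alpha$ has determinant $-1$ on $F^{\ab}$ and so generates $\Aut(F)$ together with $\Aut^+(F)$) exactly as intended; the paper's own proof simply asserts that $\delta$ centralizes $X$ and "satisfies the advertised properties," omitting the computation. What you add, and what the paper does not say, is the observation that $\delta=-q^{-2}X$, so that $B\mapsto\delta^{-1}B\delta$ is conjugation by $X$ up to a central unit and $\ol{\alpha}=\Ad(X^{-1})\circ\sigma_J$; this makes $\ol{\alpha}(X)=X^{-1}$ immediate from $\sigma_J(X)=X^{-1}$ and collapses the whole proposition to the single identity $\sigma_J(Y)=XYX^{-1}$, i.e.\ $X^{-1}P\,\ol{Y}=Y\,X^{-1}P$ with $P=O\ol{O}^{-1}$ (your formula $\sigma_J(B)=P\ol{B}P^{-1}$, $\ol{P}=P^{-1}$, is right). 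That remaining identity does hold: one computes
$$P=\thmatrix{1}{0}{0}{0}{q}{0}{0}{q-1}{1},\qquad X^{-1}P=\thmatrix{-q^{-1}}{1}{0}{0}{q}{0}{0}{q}{-q},$$
and both $X^{-1}P\,\ol{Y}$ and $Y\,X^{-1}P$ equal $\thmatrix{1-q^{-1}}{1-q}{-1}{q-q^{-1}}{-q^2}{0}{-q^{-1}}{0}{0}$, an identity over $\bZ[q^{\pm1},(q+1)^{-1}]$ which, as you say, specializes to any $k_0$ with $q+1$ invertible in $k$. So the only step you defer is the one the paper also omits, and your framing has the advantage of explaining where $\delta$ comes from (up to scalar it is $X$ itself, the element of the centralizer of $X$ correcting $\sigma_J$ on $Y$), whereas the paper just exhibits $\delta$ and its eigenvalues; the one point to flag explicitly is that $\ol{\alpha}$ being an automorphism of $\U_h(k_0)$ preserving $\SU_h(k_0)$ rests on the earlier assertion that $\sigma_J\in\Aut(\U_h(k_0))$, which is part of the setup preceding the statement, as you note.
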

\begin{proof} The key point is to find the appropriate matrix $\delta$. One can check that our $\delta$ centralizes $X$, and satisfies the advertised properties. We also note that
$$\delta v_1 = -q^{-2}v_1\qquad \delta v_2 = q^{-1}v_2\qquad \delta v_3 = q^{-3}v_3$$
so $\delta$ is invertible for any $R_0$-algebra $k_0$. The process of finding $\delta$ is a somewhat laborious calculation, which we omit.
\end{proof}

\begin{remark} The existence of the matrix $\delta$ can also be checked by showing that the traces of $\rho_{k_0}|_F(w)$ and $(\rho_{k_0}|_F\circ\alpha)(w)$ agree for sufficiently long words $w\in F$ \cite[Theorems 3.3, 3.4]{Pro76}, though this does not shed any light on \emph{why} it should exist. A better understanding of this would give guidance on when one can expect a general representation of $B_4$ to lead to characteristic quotients of $F$.



\end{remark}

\section{Characteristic quotients of $F$ via strong approximation}\label{section_characteristic}

The Burau representation gives an $\Aut^+(F)$-invariant map $\rho_{k_0}|_F : F\ra \SU_h(k_0)$ for any $R_0$-algebra $k_0$. In Proposition \ref{prop_bootstrap} we showed that this map is moreover characteristic. To obtain finite simple characteristic quotients of $F$, it remains to understand the image of $\rho_{k_0}|_F$. We will show that for suitable choices of finite fields $k_0$, the map is surjective onto $\SU_h(k_0)$. Taking the quotient by the center, we obtain simple characteristic quotients of $F$.

In this section we present a rather general method using the strong approximation theorems of Weisfeiler and Pink. For most number fields $K$, this establishes surjectivity for all but finitely many residue fields $k_0$ of $K$. This gives us infinitely many simple characteristic quotients, thus proving Theorem \ref{thm_initial}, but at the cost of a lack of specificity due to the mysterious finite collection of primes that must be excluded at every application of the strong approximation theorem.

In section \S\ref{section_tiep}, we describe a more direct method: By cleverly choosing specializations of $q$ to generators of various finite fields and exploiting the known structure of the maximal subgroups of $\SL_3(\bF_q)$ and $\SU_3(\bF_q)$, we are able to prove the more general Theorem \ref{thm_main}.

\subsection{Algebraic groups and strong approximation}

We recall some basic definitions of reductive group schemes. An algebraic group $G$ over a field $k$ is a smooth affine group scheme over $k$. It is reductive if $G_{\ol{k}}$ has trivial unipotent radical, semisimple if $G_{\ol{k}}$ has trivial radical, simple if $G_{\ol{k}}$ has no proper nontrivial connected normal subgroups, and simply connected if any central isogeny from a connected algebraic group to $G$ is trivial. When discussing reductive group schemes, we follow the conventions of Conrad \cite[Definition 3.1.1]{Con14}:

\begin{defn}[{Algebraic group schemes}] Let $S$ be a scheme. An algebraic group scheme over $S$ is a smooth group scheme affine over $S$ (sometimes we will say a ``smooth affine $S$-group scheme''). It is reductive if its geometric fibers are connected reductive algebraic groups in the usual sense. It is simple (resp. semisimple, simply connected) if its geometric fibers are simple (respectively semisimple, simply connected).
\end{defn}

If $v$ is a place of a number field $K$, let $K_v$ denote the completion of $K$ at $v$, and if $v$ is a finite place, let $\cO_v$ (resp. $k_v$) be the ring of integers (resp. residue field) of $K_v$. Recall that for a set of places $S$, $\bA_K^S$ denotes the ring of adeles away from $S$: the restricted product $\prod'_{v\notin S}K_v$. Given Proposition \ref{prop_IQ}, our general method for producing finite simple characteristic quotients is based on the following strong approximation theorem due to Weisfeiler and Pink:

\begin{thm}[Strong approximation for linear groups]\label{thm_SA} Let $K$ be a number field, let $G$ be a simple, simply connected linear algebraic group over $K$, and let $\Lambda\subset G(K)$ be a finitely generated Zariski-dense subgroup. Let $S$ denote a set of places $v$ of $K$ for which the image of $\Lambda$ in $G(\bA_K^S)$ is not discrete. Suppose $K$ is generated over $\bQ$ by the traces $\tr\Ad\gamma$. Then the closure of $\Lambda$ in $G(\bA_K^S)$ is open in $G(\bA_K^S)$.
\end{thm}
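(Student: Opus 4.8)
The plan is to deduce the statement from the Nori--Weisfeiler--Pink strong approximation machinery: this is essentially a form of Pink's strong approximation theorem (refining Weisfeiler's, which in turn builds on Nori), so the real work is to recall the main steps and to check that the hypotheses here are exactly the ones those theorems need --- in particular, to identify the set $S$ correctly.

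First I would spread out. Fixing a finite generating set of $\Lambda$ and an embedding $G\hookrightarrow\GL_{N}$, one produces a finite set of places $S_0$, a reductive model $\cG$ of $G$ over $\cO_{K,S_0}$ with fiberwise simple and simply connected fibers, and an inclusion $\Lambda\subset\cG(\cO_{K,S_0})$; after enlarging $S_0$, Zariski density of $\Lambda$ in $G$ forces the reduction $\ol{\Lambda}_v\le\cG(k_v)$ to be Zariski dense in $\cG_{k_v}$ for every finite $v\notin S_0$. At each such $v$ whose residue characteristic $p$ is large relative to $N$ (so for all but finitely many $v$), Nori's theorem applies: $\ol{\Lambda}_v$ has a normal subgroup $\ol{\Lambda}_v^{+}$, generated by its elements of order $p$, of the form $\tilde{G}(k_v)^{+}$ for a connected algebraic subgroup $\tilde{G}\le\cG_{k_v}$, with $|\ol{\Lambda}_v/\ol{\Lambda}_v^{+}|$ bounded independently of $v$. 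Since $\ol{\Lambda}_v$ is Zariski dense it normalizes $\tilde{G}$, so $\tilde{G}$ is normal in the simple group $\cG_{k_v}$; it cannot be trivial (a bounded finite group has $0$-dimensional Zariski closure), hence $\tilde{G}=\cG_{k_v}$, and $\cG(k_v)^{+}=\cG(k_v)$ because $\cG_{k_v}$ is simply connected over a finite field (Steinberg/Lang). Thus $\ol{\Lambda}_v=\cG(k_v)$; lifting through the congruence filtration by Hensel's lemma, the closure of $\Lambda$ in $\cG(\cO_v)$ equals $\cG(\cO_v)$, which is open in $G(K_v)$, for all finite $v$ outside a finite set that we fold into $S_0$.

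The decisive step will be the passage from openness place by place to openness of the closure $L$ of $\Lambda$ in the restricted product $G(\bA_K^{S})$. Each finite local slice of $L$ is now open, and at archimedean or exceptional places it is bounded, so the only way $L$ can fail to be open is a \emph{diagonal} phenomenon: on some finite set $T$ of places whose local groups are pairwise abstractly isomorphic, the image of $L$ in $\prod_{v\in T}\cG(\cO_v)$ is the graph of a system of field- and Dynkin-diagram automorphisms relating those factors. This is precisely what the hypothesis $K=\bQ(\tr\Ad\gamma:\gamma\in\Lambda)$ excludes: if some $\tau\in\Gal(\ol{\bQ}/\bQ)$ acting nontrivially on $K$ carried $\Lambda$ to a conjugate of itself, then $\tau$ would fix the set $\{\tr\Ad\gamma:\gamma\in\Lambda\}$, hence fix $K$ pointwise, a contradiction; so $\Lambda$ descends to no proper subfield and admits no such diagonal relation. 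Plugging this into Pink's theorem --- which, unlike Weisfeiler's original, is valid over any global field and avoids the classification of finite simple groups --- yields that $L$ is open in $G(\bA_K^{S'})$, where $S'$ is the set of places at which $\Lambda$ has bounded image. Finally one checks that the image of $\Lambda$ in $G(\bA_K^{S})$ is non-discrete precisely when $S\supseteq S'$ (retaining a place of $S'$ keeps the diagonal image locally bounded, hence discrete), so the hypothesis on $S$ in the statement is exactly $S\supseteq S'$, and since projecting away the coordinates in $S\setminus S'$ carries the open subgroup to an open subgroup, the conclusion follows.

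I expect the independence-of-places step to be the main obstacle: the delicate points are (i) that the adjoint trace field hypothesis really rules out \emph{every} diagonal correlation among local images, including those built from outer automorphisms and from deeper congruences, and (ii) the bookkeeping reconciling the set $S$ produced by the general theorem with the non-discreteness condition stated here. By contrast, the spreading-out reduction, the appeal to Nori's theorem, and the Steinberg/Lang input are standard and essentially local.
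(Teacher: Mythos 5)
The paper gives no proof of this theorem at all: it is quoted as a special case of Weisfeiler's and Pink's strong approximation results (Pink, Theorem 0.3), with a footnote noting that the hypothesis that $K$ is generated by the traces $\tr\Ad\gamma$ is there precisely to guarantee Pink's Assumption 0.1 (that $\Lambda$ does not descend to a proper subfield); since your argument also bottoms out in an appeal to Pink's theorem at the decisive adelic step, it is essentially the same approach. The surrounding spreading-out/Nori/Steinberg--Lang material just re-sketches the internals of the cited results and is unnecessary once Pink is invoked (and should not be leaned on as written: Nori's theorem is stated for prime residue fields, fixing the \emph{set} of adjoint traces does not by itself fix $K$ pointwise, and the claimed equivalence ``non-discrete precisely when $S\supseteq S'$'' with $S'$ the set of places of bounded image is not correct as phrased).
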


This theorem is a special case of the results of Weisfeiler \cite{Weis84} and Pink \cite[Theorem 0.3]{Pink00}.\footnote{Also see \cite[Window 9, Theorem 2]{LS03} for a discussion, and \cite{MVW84} for a somewhat weaker result.} The condition that $K$ is generated by the traces $\tr\Ad\gamma$ exists essentially to guarantee that $\Lambda$ is not a subgroup of $G(L)$ for some subfield $L\subset K$.\footnote{This ensures that ``Assumption 0.1'' of \cite{Pink00} is satisfied.}

Keeping the notation and assumptions as Theorem \ref{thm_SA}, let us further choose an embedding of $G$ into $\GL_{n,K}$. Relative to this embedding, define $G(\cO_v) := G(K)\cap\GL_n(\cO_v)$, and define $G(k_v)$ to be its image in $\GL_n(k_v)$. For varying $v$, these groups are related as follows: if $\cG$ denotes the schematic closure of $G$ inside $\GL_{n,\cO_K}$, then $\cG$ admits a unique structure of an affine flat group scheme over $\cO_K$ with generic fiber $G$ and $\cG(\cO_K) = G(K)\cap\GL_n(\cO_K)$ (see Proposition \ref{prop_schematic_closure} in the appendix). In particular for varying places $v$, the groups $G(\cO_v)$ (resp. $G(k_v)$) are all obtained as the $\cO_v$ (resp. $k_v$) valued points of the single group scheme $\cG$ over $\cO_K$. Moreover, there exists $b\in\cO_K$ such that the geometric fibers of $\cG_{\cO_K[1/b]}$ over $\cO_K[1/b]$ are connected reductive with identical root data.\footnote{More precisely, since the smooth locus is open and $G$ is smooth, $\cG$ is smooth over a dense open of $\Spec\cO_K$. Next, since the generic fiber is reductive, passing to a possibly smaller dense open, we may assume that the identity components of the fibers are all reductive \cite[Proposition 3.1.9]{Con14}, which further implies that all fibers are connected \cite[Proposition 3.1.12]{Con14}. Since $\cG$ \'{e}tale locally admits a split maximal torus, we see that the root datum of geometric fibers of $\cG$ are locally constant.} In particular, since $G$ is simple and simply connected, so is $\cG_{\cO_K[1/b]}$.

\begin{cor} For all but finitely many places $v$ of $K$, $\Lambda$ is contained in $G(\cO_v)$, and the reduction map $G(\cO_v)\ra G(k_v)$ maps $\Lambda$ surjectively onto $G(k_v)$.
\end{cor}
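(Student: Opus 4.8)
The plan is to deduce this from Theorem \ref{thm_SA} by combining three ingredients: finite generation of $\Lambda$ (which will force $\Lambda\subseteq G(\cO_v)$ for all but finitely many $v$), the openness of the closure $\overline{\Lambda}$ in the adelic group (which will upgrade this to the statement that $\Lambda$ is \emph{dense} in $G(\cO_v)$ for all but finitely many $v$), and smoothness of $\cG$ over $\cO_v$ (so that the reduction $G(\cO_v)\to G(k_v)$ is a continuous surjection with open kernel, and density passes to surjectivity onto the finite group $G(k_v)$).

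First I would bound denominators. A finite generating set for $\Lambda$ consists of finitely many matrices in $\GL_n(K)$, and the entries of these matrices and of their inverses have denominators divisible by only finitely many primes of $\cO_K$. Hence for all $v$ outside a finite set $\Sigma_0$ of finite places, every generator of $\Lambda$, and therefore all of $\Lambda$, lies in $\GL_n(\cO_v)$, so $\Lambda\subseteq G(K)\cap\GL_n(\cO_v)=G(\cO_v)$. I then enlarge $\Sigma_0$ to a finite set $\Sigma$ also containing the (finite) set $S$ and every place dividing $b$; for $v\notin\Sigma$, $\cG_{\cO_v}$ is smooth with reductive special fibre, the group $G(\cO_v)$ is a compact open subgroup of $G(K_v)$, and the profinite topology on it coincides with the subspace topology inherited from $\GL_n(\cO_v)$.

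Next I would invoke Theorem \ref{thm_SA}: the closure $\overline{\Lambda}$ of $\Lambda$ in $G(\bA_K^S)$, the restricted product of the groups $G(K_v)$ ($v\notin S$) with respect to the compact open subgroups $G(\cO_v)$, is open. By the definition of the restricted-product topology, an open subgroup contains, for some finite set $T$ of places disjoint from $S$ (and I may take $T\supseteq\Sigma\setminus S$), the subgroup $N_T$ of tuples that are trivial at every place of $T$ and arbitrary in $G(\cO_v)$ for $v\notin S\cup T$. Fix $v\notin S\cup T$ and let $p_v\colon G(\bA_K^S)\to G(K_v)$ be the projection. On one hand $p_v(\overline{\Lambda})\supseteq p_v(N_T)=G(\cO_v)$; on the other hand $p_v$ is continuous and restricts to the diagonal inclusion $G(K)\hookrightarrow G(K_v)$ on $\Lambda$, so $p_v(\overline{\Lambda})$ is contained in the closure of $\Lambda$ taken inside $G(K_v)$, which lies in the closed subgroup $G(\cO_v)$ because $v\notin\Sigma$. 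Hence $\Lambda$ is dense in $G(\cO_v)$.

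Finally, for $v\notin\Sigma$ the reduction map $r_v\colon G(\cO_v)\to G(k_v)$ is surjective — by construction of $G(k_v)$ as the image, or equivalently via formal smoothness of $\cG_{\cO_v}$ over the henselian ring $\cO_v$ — and it is continuous with open kernel (the first congruence subgroup) since $G(k_v)$ is finite and discrete. Then $r_v^{-1}(r_v(\Lambda))$ is an open, hence closed, subgroup of $G(\cO_v)$ containing the dense subgroup $\Lambda$, so it is all of $G(\cO_v)$, whence $r_v(\Lambda)=G(k_v)$. This holds for every $v$ outside the finite set $S\cup T$, which proves the corollary. I expect the only delicate point to be the third paragraph — extracting, from the openness of $\overline{\Lambda}$ in the adeles, the local density of $\Lambda$ in each $G(\cO_v)$; this rests on the fact that an open subgroup of a restricted product captures the entire local factor at all but finitely many places, together with careful bookkeeping of exactly which finite set of places is discarded. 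The denominator-bounding step and the final ``dense subgroup of a profinite group surjects onto every finite continuous quotient'' step are routine.
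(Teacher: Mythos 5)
Your proposal follows essentially the same route as the paper: bound the denominators of a finite generating set, apply Theorem \ref{thm_SA} to get openness of the adelic closure, extract density of $\Lambda$ in $G(\cO_v)$ at all but finitely many places from the restricted-product topology, and conclude surjectivity onto $G(k_v)$ from smoothness/Hensel together with the open congruence kernel. The only hypothesis you invoke without checking is that your chosen finite set $S$ actually satisfies the non-discreteness condition in Theorem \ref{thm_SA}; as in the paper, this is immediate because $\Lambda$ is infinite (being Zariski-dense in the positive-dimensional group $G$) and is contained in the compact subgroup $G\bigl(\prod_{v\notin S}\cO_v\bigr)$ of $G(\bA_K^S)$, so its image there cannot be discrete.
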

\begin{proof} Choose generators $\lambda_1,\ldots,\lambda_r$ for $\Lambda$. There is a finite set of places $S_1$, containing all infinite places, such that the images of $\lambda_i$ have $v$-integral coordinates in $\GL_n$ for every $v\notin S_1$. In particular $\Lambda$ is contained in the compact group 
$$G\Big(\prod_{v\notin S_1}\cO_v\Big)\subset G(\bA_K^{S_1}).$$
Here, this containment should be viewed as occuring inside $\GL_n(\bA_K^{S_1})$. Thus, the infinitude of $\Lambda$ implies that its image in $G(\bA_K^{S_1})$ is not discrete. Applying the strong approximation theorem with $S := S_1$, we deduce that $\Lambda$ is dense in an open subgroup of $G(\bA_K^{S_1})$, and hence there is a finite set of places $S_2$ such that $\Lambda$ is dense in $G(\cO_v)$ for all $v\notin S_1\cup S_2$. In particular, possibly excluding the places above $b$ (see preceding paragraph), Hensel's lemma implies that $\Lambda$ surjects onto the finite group $G(k_v)$ \cite[Theorem 3.5.62]{Poo17}.
\end{proof}

Combining Theorem \ref{thm_SA} and Proposition \ref{prop_IQ}, we arrive at a rather general method for producing invariant quotients:

\begin{prop}\label{prop_main} Let $A$ be a finitely generated group, and $F\unlhd A$ a finitely generated normal subgroup. Let $K$ be a number field, and let $G$ be a connected, simply connected, simple algebraic group over $K$ embedded inside an algebraic group $H$. Let
$$\rho : A\ra H(\ol{K})$$
be a homomorphism. Assume that
\begin{enumerate}
\item\label{cond_generation} $K$ is generated over $\bQ$ by the traces $\tr\Ad\gamma$ for $\gamma\in F$.
\item\label{cond_containment} $G\subset\ol{\rho(A)}^\zar$,
\item\label{cond_rationality} $\rho(F)\subset G(K)$, and
\item\label{cond_noncentrality} $\rho(F)\not\subset Z(G)(K)$.
\end{enumerate}
Then for all but finitely many places $v$ of $K$, $G(k_v)$, and hence the simple group $G(k_v)/Z(G(k_v))$ is an $A$-invariant quotient of $F$.
\end{prop}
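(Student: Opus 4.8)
The plan is to deduce the statement from the strong approximation corollary following Theorem~\ref{thm_SA}, applied to $\Lambda := \rho(F)$, and then to upgrade the surjectivity of the resulting reduction maps to $A$-invariance by means of Proposition~\ref{prop_IQ}.

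First I would verify that $\rho(F)$ is Zariski-dense in $G$, since this is the hypothesis that feeds into strong approximation. Because $F \unlhd A$, every $\rho(a)$ ($a\in A$) normalizes $\rho(F)$ and hence its Zariski closure $\ol{\rho(F)}^\zar$ (conjugation by $\rho(a)$ is an automorphism of $H$, so it commutes with taking Zariski closure); thus $\rho(A)$, and therefore $\ol{\rho(A)}^\zar$, lies in $N_H(\ol{\rho(F)}^\zar)$. By the containment hypothesis~\ref{cond_containment}, $G \subseteq \ol{\rho(A)}^\zar$, so $G$ normalizes $\ol{\rho(F)}^\zar$; together with $\ol{\rho(F)}^\zar \subseteq G$ (the rationality hypothesis~\ref{cond_rationality}) this exhibits $\ol{\rho(F)}^\zar$ as a normal algebraic subgroup of the connected simple group $G$. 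Being smooth (as $K$ has characteristic zero), it is therefore either contained in $Z(G)$ or equal to $G$, and the first alternative would give $\rho(F) \subseteq Z(G)(K)$, contradicting hypothesis~\ref{cond_noncentrality}; so $\ol{\rho(F)}^\zar = G$. Moreover $\rho(F)$ is finitely generated because $F$ is, it lies in $G(K)$ by hypothesis~\ref{cond_rationality}, and hypothesis~\ref{cond_generation} is precisely the condition that $K$ be generated over $\bQ$ by the traces $\tr\Ad\gamma$, $\gamma\in\Lambda$. Hence Theorem~\ref{thm_SA} (where the simple connectedness of $G$ enters) and its corollary apply to $\Lambda$.

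Next I would arrange reductions modulo primes compatibly on all of $A$. Fix a faithful representation $H \hookrightarrow \GL_{m,K}$, restrict it to an embedding $G \hookrightarrow \GL_{m,K}$, and use this embedding in the corollary, so that $G(k_v)$ is the image of $G(\cO_v) = G(K)\cap\GL_m(\cO_v)$ under reduction mod $v$. Since $A$ is finitely generated there is a number field $L \supseteq K$ with $\rho(A) \subseteq \GL_m(L)$; for all but finitely many places $w$ of $L$ the generators of $\rho(A)$ and their inverses are $w$-integral, giving a reduction homomorphism $\rho_w : A \to \GL_m(k_w)$. For such a $w$ lying over a place $v$ of $K$, the restriction $\rho_w|_F$ reduces a $K$-rational matrix, so it takes values in $\GL_m(k_v)\subseteq\GL_m(k_w)$, is independent of the choice of $w\mid v$, and coincides with the reduction mod $v$ of $\rho|_F$; denote this map $\varphi_v : F \to G(k_v)$. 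The corollary then gives that for all but finitely many $v$ the map $\varphi_v$ is surjective onto $G(k_v)$; discarding also the finitely many $v$ lying below a bad $w$, one obtains for all but finitely many $v$ a surjection $\varphi_v : F \twoheadrightarrow G(k_v)$ which extends to the homomorphism $\rho_w : A \to \GL_m(k_w) =: \tilde G$ for any good $w\mid v$.

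Finally, the forward direction of Proposition~\ref{prop_IQ}, applied with $G(k_v)\hookrightarrow\tilde G$, $\varphi = \varphi_v$, $\tilde\varphi = \rho_w$, shows $\varphi_v$ is $A$-invariant: for $a\in A$, writing $h(a)\in\Aut(F)$ for conjugation by $a$, one has $\varphi_v\circ h(a) = \sigma_a\circ\varphi_v$ with $\sigma_a$ conjugation by $\rho_w(a)$, and $\sigma_a$ preserves $G(k_v) = \varphi_v(F)$ because $\rho_w(a)\,\varphi_v(F)\,\rho_w(a)^{-1} = \varphi_v(aFa^{-1}) = \varphi_v(F)$, using normality of $F$ and surjectivity of $\varphi_v$. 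Thus $G(k_v)$ is an $A$-invariant quotient of $F$; composing with $G(k_v) \twoheadrightarrow G(k_v)/Z(G(k_v))$ and noting that each $\sigma_a$ preserves the center and so descends, the simple group $G(k_v)/Z(G(k_v))$ is $A$-invariant as well, for all but finitely many $v$. The step I expect to demand the most care is the reduction bookkeeping of the previous paragraph — matching the map $\varphi_v$ produced by strong approximation with the restriction to $F$ of a genuine mod-$v$ reduction of $\rho$ on all of $A$, while controlling the various finite sets of excluded primes of $K$ and of $L$; the algebraic-group input above and the invariance conclusion here are routine once Proposition~\ref{prop_IQ} and Theorem~\ref{thm_SA} are in hand.
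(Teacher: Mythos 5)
Your proposal is correct and follows essentially the same route as the paper: Zariski-density of $\rho(F)$ in $G$ via normality of $\ol{\rho(F)}^\zar$ in $G$ together with simplicity and the noncentrality hypothesis, then the corollary to the strong approximation theorem to get $\rho(F)\subset G(\cO_v)$ surjecting onto $G(k_v)$ for almost all $v$, and finally Proposition \ref{prop_IQ} applied to an integral extension of $\rho$ to all of $A$ (your extension to $\GL_m(k_w)$ over a number field $L$ containing the matrix entries of $\rho(A)$ is a harmless, and in fact more carefully justified, variant of the paper's terser extension to $\ol{\rho(A)}^\zar(k_v)$). The only point you assert without justification is that $G(k_v)/Z(G(k_v))$ is indeed simple for all but finitely many $v$, which is part of the conclusion and which the paper handles with a citation to \cite[Theorem 24.17]{MT11}.
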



\begin{proof} 
Since $G$ is simple, finite normal subgroups of $G$ are central, so the noncentrality hypothesis on $\rho(F)$ implies that it is infinite, and \ref{cond_containment} implies that $\ol{\rho(F)}^\zar$ is normal inside $G$. In particular, $\ol{\rho(F)}^\zar$ is a positive dimensional normal subgroup of $G$, and hence it is the entirety of $G$, so $\rho(F)$ is a finitely generated Zariski-dense subgroup of $G(K)$. Fixing an embedding $G\hookrightarrow\GL_{n,K}$, strong approximation implies that for all but finitely many places $v$ of $K$, $\rho(F)\subset G(\cO_v)$ and surjects onto $G(k_v)$, as desired. Possibly excluding finitely many additional primes, $\rho|_F : F\ra G(k_v)$ extends to a map $A\rightarrow \ol{\rho(A)}^\zar(k_v)$. It follows from \ref{prop_IQ} that $\rho|_F : F\ra G(k_v)$ is $A$-invariant, and so is its composition with the characteristic quotient $G(k_v)\ra G(k_v)/Z(G(k_v))$. Finally, we note for all but finitely many places $v$, $G(k_v)/Z(G(k_v))$ is simple \cite[Theorem 24.17]{MT11}.
\end{proof}

\begin{remark}\label{remark_FL} Let $S_g$ be a closed surface of genus $g\ge 2$, and let $S_{g,1}$ be a once-punctured $S_g$. Funar and Lochak's Theorem 1.4 \cite{FL18} can be understood as applying the above proposition to certain quantum representations of the mapping class group $\Gamma_{g,1}$ of $S_{g,1}$. Using the isomorphism $\Gamma_{g,1}\cong\Aut^+(\pi_1(S_g))$ \cite[p235]{FM11}, they obtain infinitely many finite simple characteristic quotients of $\pi_1(S_g)$ for every $g\ge 2$.
\end{remark}

\subsection{Specializations of the Burau representation at roots of unity}\label{ss_burau_roots}
For a nonzero complex number $t\in\bC^\times$, let $\rho_t, h_t, H_t,D_t$ denote the specializations of $\rho, h,H,D$ by simply setting $q = t$. If $t\in\bC^\times - \{\pm1\}$, we will write $\U_{h,t}$ for the base change $\U_{h,\bZ[t+t^{-1}]} = \U_h\times_{R_0}\bZ[t+t^{-1}]$, where $\bZ[t+t^{-1}]\subset\bC$ is given the $R_0$-algebra structure sending $q+q^{-1}\mapsto t+t^{-1}$. Similarly, $\SU_{h,t}$ is the subgroup of matrices of determinant 1.
When $t$ lies on the complex unit circle, the behavior of $h_t$ can be summarized as follows:
\begin{prop} If $t$ is a complex number with absolute value $|t| = 1$. Then $h_t$ is a Hermitian form on $\bC^3$ relative to complex conjugation. We view $\bR$ as a $\bZ[q+q^{-1}]$-algebra via the map $q+q^{-1}\mapsto t+t^{-1}$. Then we have
\begin{enumerate}[label=(\alph*)]
\item $h_t$ is nondegenerate if and only if $t\notin\{-1,i,-i\}$.
\item If $0 < \Rp t < 1$, then $h_t$ is positive definite, and $\U_{h,t}(\bR)\cong \U(3)$ is compact.
\item If $-1 < \Rp t < 0$ then $h_t$ is of indefinite type $(2,1)$, and $\SU_{h,t}(\bR)\cong \SU(2,1)$ is noncompact.
\end{enumerate}
\end{prop}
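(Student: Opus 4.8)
The plan is to reduce everything to the determinant formula \eqref{eq_det_of_H} and the explicit diagonalization of $h$ recorded in \S\ref{ss_the_representation}. First I would fix the basic identifications available when $|t| = 1$: since then $t^{-1} = \ol t$, the involution $q\mapsto q^{-1}$ specializes to complex conjugation, and the relation $H^t = \ol H$ becomes $H_t^* = H_t$, so that $h_t(v,w) = v^t H_t\ol w$ is genuinely a Hermitian form on $\bC^3$. Moreover, for $t\ne\pm1$ the $R_0$-algebra $R\otimes_{R_0}\bR$ (with $q+q^{-1}\mapsto t+t^{-1} = 2\Rp t$) is $\bR[s]/(s^2 - (t+t^{-1})s + 1)$, whose two roots $t, t^{-1} = \ol t$ are distinct and non-real, so it is $\cong\bC$ with the induced involution equal to complex conjugation; hence $\U_{h,t}(\bR)$ and $\SU_{h,t}(\bR)$ are precisely the unitary and special unitary groups of $h_t$ on $\bC^3$.

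For part (a), specializing \eqref{eq_det_of_H} gives $\det H_t = \left(\tfrac{t+1}{t}\right)^{3}\tfrac{t^4-1}{t-1} = \tfrac{(t+1)^4(t^2+1)}{t^3}$, which vanishes exactly when $t\in\{-1,i,-i\}$; since $h_t$ is nondegenerate if and only if $\det H_t\ne 0$, this proves (a). For parts (b) and (c), I would assume $t\ne -1$, so that $q+1$ is invertible in $\bC$ and the matrix $O := [v_1\ v_2\ v_3]$ of \S\ref{ss_the_representation} is invertible; then $h_t$ is congruent to the diagonal form $D_t = O^t H_t\ol O$, whose entries I would evaluate on the unit circle. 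Writing $c = \Rp t = \tfrac12(t+t^{-1})$, one has $\tfrac{(t+1)^2}{t} = (t+t^{-1})+2 = 2(1+c)$ and $(t+t^{-1})^2 + 2(t+t^{-1}) = 4c^2 + 4c = 4c(1+c)$, so the diagonal of $D_t$ is $\bigl(4c(1+c),\,2(1+c),\,2(1+c)\bigr)$. Since $t\ne -1$ forces $1+c>0$, the last two entries are strictly positive and the signature of $h_t$ is controlled entirely by the sign of $c = \Rp t$: for $0<\Rp t<1$ all three entries are positive, so $h_t$ is positive definite and $\U_{h,t}(\bR)\cong\U(3)$ is compact, giving (b); for $-1<\Rp t<0$ the signature is $(2,1)$, so $\SU_{h,t}(\bR)\cong\SU(2,1)$ is noncompact, giving (c).

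I do not expect a genuine obstacle: the entire content is carried by \eqref{eq_det_of_H} and the diagonalization of \S\ref{ss_the_representation}. The only points needing a little care are the identification $R\otimes_{R_0}\bR\cong\bC$ (valid precisely because $t\ne\pm1$, which is where the degenerate values $t\in\{-1,i,-i\}$ would otherwise intrude) and tracking which values of $t$ are admissible in each item; the compactness of $\U(3)$ and the noncompactness of $\SU(2,1)$ are standard.
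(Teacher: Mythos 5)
Your proposal is correct and follows essentially the same route as the paper, which likewise deduces everything from the determinant formula \eqref{eq_det_of_H} and the diagonal form $D$ of \S\ref{ss_the_representation}; you have simply carried out explicitly the evaluations on the unit circle that the paper leaves to the reader. The only cosmetic difference is that the paper verifies the noncompactness of $\SU(2,1)$ by exhibiting an explicit unbounded one-parameter family of matrices, whereas you cite it as standard, which is fine.
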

\begin{proof} The noncompactness of $\SU(2,1)$ can be seen directly from the observation that it contains every matrix of the form
$$\thmatrix{\sqrt{x^2+1}}{0}{x}{0}{1}{0}{x}{0}{\sqrt{x^2+1}}\qquad x\in\bR$$
and hence it is unbounded, hence noncompact. Everything else follows from the formulas for $\det(H)$ and $D$ given in \S\ref{ss_the_representation}.
\end{proof}

\begin{remark} The notation $U_{h,t}$ is intuitive, but can be misleading. For example, if $t = 1$, $h_t = h_1$ is a nondegenerate hermitian form on $\bC^3$, but $\U_{h,t}(\bR)$ is not the unitary group of $h_1$. The issue is that in this case $\U_{h,1}(\bR)$ is defined to be a subgroup of $3\times 3$ matrices with coefficients in the non-\'{e}tale algebra $\bR\otimes_{R_0}R\cong \bR[T]/(T-1)^2$. One can show that this is not even a reductive group (see \ref{prop_finite_groups}(c) below).
\end{remark}

\begin{remark} The element $t+t^{-1}$ may not be integral. In particular, after specialization, the group scheme $U_{h,t}$ may not surject onto $\Spec\bZ$.
\end{remark}

In \cite{Ven14}, Venkataramana studied the images of the specializations of the Burau representation (for various braid groups) when $q$ is a root of unity. Let $\zeta_d := e^{2\pi i/d}$. Then the groups $\U_{h,\zeta_d}, \SU_{h,\zeta_d},\PU_{h,\zeta_d}$ are defined over $\bZ[\zeta_d+\zeta_d^{-1}]$. In particular, for $d = 3,4,6$, these groups are defined over $\bZ$.

\begin{thm}[Venkataramana, McMullen, Deligne-Mostow]\label{thm_arithmetic} For $d = 3,4,6$, the image of $\rho_{\zeta_d}$ has finite index in $\U_{h,\zeta_d}(\bZ)$.
\end{thm}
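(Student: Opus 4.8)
The plan is to dispatch the three values $d=6$, $d=4$, $d=3$ separately, since the real groups $\U_{h,\zeta_d}(\bR)$ are of genuinely different types, and in each case to identify $\rho_{\zeta_d}(B_4)$ with an arithmetic monodromy group whose structure is already understood. The case $d=6$ is essentially formal: by the proposition of \S\ref{ss_burau_roots}, $\Rp\zeta_6=\tfrac12\in(0,1)$, so $h_{\zeta_6}$ is positive definite and $\U_{h,\zeta_6}(\bR)\cong\U(3)$ is compact; since $\bQ(\zeta_6+\zeta_6^{-1})=\bQ$ has a unique archimedean place, $\U_{h,\zeta_6}(\bZ)$ embeds discretely in this compact group and is therefore finite, so every one of its subgroups has finite index and there is nothing to prove. (Equivalently, $\rho_{\zeta_6}$ factors through a finite quotient of $B_4$.)

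For $d=4$ the form $h_{\zeta_4}$ is degenerate: from the diagonalization $D$ of \S\ref{ss_the_representation}, at $q=i$ the vector $v_1$ spans a one-dimensional radical $N$ of $h_{\zeta_4}$, and the form induced on $(\bZ[i])^3/N$ is, up to finite index, the positive-definite rank-two Hermitian form $\diag(2,2)$ over $\bZ[i]$. Hence $\U_{h,\zeta_4}$ is not reductive, and $\U_{h,\zeta_4}(\bZ)$ is an extension of a finite group by the $\bZ[i]$-points of the unipotent radical, hence virtually free abelian of finite rank. I would first observe that $\rho_{\zeta_4}(B_4)$ preserves $N$; it then remains to verify the purely computational assertion that the matrices $\rho_{\zeta_4}(\sigma_1),\rho_{\zeta_4}(\sigma_2),\rho_{\zeta_4}(\sigma_3)$ generate a finite-index sublattice of this virtually abelian group --- a finite linear-algebra computation over $\bZ[i]$, also recovered from McMullen's analysis of the Burau representation at roots of unity.

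The substantive case is $d=3$, where $\Rp\zeta_3=-\tfrac12\in(-1,0)$, so $\SU_{h,\zeta_3}(\bR)\cong\SU(2,1)$, while $R=\bZ[\zeta_3]=\cO_{\bQ(\sqrt{-3})}$ and $\U_{h,\zeta_3}(\bZ)$ is an arithmetic lattice in $\U(2,1)$ commensurable with the Picard modular group of $\bQ(\sqrt{-3})$. I would argue in three steps. First, I would establish that $\rho_{\zeta_3}(B_4)$ is Zariski dense in $\SU_{h,\zeta_3}$: since the generators $\rho_{\zeta_3}(\sigma_i)$ are complex reflections, it suffices to check that the image acts irreducibly and is not virtually solvable, which one reads off from the explicit matrices (or from the Zariski closure of the unspecialized Burau representation together with Proposition \ref{prop_split_prime}). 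Second, I would recognize $\rho_{\zeta_3}(B_4)$, via the classical dictionary between the reduced Burau representation of $B_4$ and the monodromy of the family of cyclic triple covers of $\bP^1$ branched at four points (equivalently, of an Appell--Lauricella hypergeometric system), as a Deligne--Mostow monodromy group; for this weight datum the Deligne--Mostow $\Sigma\mathrm{INT}$ condition holds, so the group is an arithmetic lattice in $\U(2,1)$ by the work of Deligne--Mostow and Mostow.

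The third step --- upgrading ``arithmetic lattice, i.e.\ commensurable with $\U_{h,\zeta_3}(\bZ)$'' to the sharp statement ``finite index in $\U_{h,\zeta_3}(\bZ)$'' --- is the one I expect to be the real obstacle. Since $\SU(2,1)$ has real and $\bQ$-rank one, the congruence subgroup property is unavailable, so one cannot simply argue that a Zariski-dense subgroup containing a congruence subgroup has finite index. Instead the plan is to produce explicit unipotent elements of $\rho_{\zeta_3}(B_4)$ --- transvections $u\mapsto u+\lambda\,h(u,w)\,w$ attached to isotropic vectors $w$, arising from suitable words in the $\sigma_i$ --- and then to run a generation-by-unipotents argument of $\SL_2(\bZ)$-amalgam type, concluding that these transvections already generate a finite-index subgroup of $\U_{h,\zeta_3}(\bZ)$; alternatively one quotes McMullen's explicit determination of the commensurability class and index of this lattice. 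The precise execution of these last two steps is the content of Venkataramana's work \cite{Ven14}, building on Deligne--Mostow and McMullen, to which I would refer for the complete argument.
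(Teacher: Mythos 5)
Your proposal is correct and in substance coincides with the paper's proof, which consists solely of the citation to \cite{Ven14}*{Theorem 3} (with the $d=3$ case attributed to McMullen \cite{Mcm13} and Deligne--Mostow \cite{DM86}); your treatment of $d=6$ (finiteness of $\U_{h,\zeta_6}(\bZ)$ by compactness) and of the degenerate $d=4$ case matches the paper's own remarks, and for $d=3$ you ultimately defer to the same references. One small simplification: your worried ``third step'' is automatic, since once Deligne--Mostow/Mostow gives that the image is a lattice in $\U(2,1)$, a lattice contained in the discrete group $\U_{h,\zeta_3}(\bZ)$ has finite index there by comparing covolumes, with no need for a unipotent-generation argument or any congruence-subgroup considerations.
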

\begin{proof} This is \cite[Theorem 3]{Ven14}, though in the case $d = 3$, this is due to McMullen \cite[Theorem 10.3]{Mcm13}, and Deligne-Mostow \cite{DM86}.
\end{proof}

For other values of $d$, the image of $\rho_{\zeta_d}$ can fail to be arithmetic see \cite[Corollary 11.8]{Mcm13} for the case $d = 18$. Note that $h_{\zeta_4}$ is degenerate and $\SU_{h,\zeta_6}(\bZ)$ is finite, since it is a lattice inside the compact group $\SU_{h,\zeta_6}(\bR)$. Thus for the purpose of constructing finite simple characteristic quotients, we are led to consider the specialization at $\zeta_3$.

\begin{prop}\label{prop_density} The Zariski closure of $\rho_{\zeta_3}(B_4)$ contains $\SU_{h,\zeta_3}$.
\end{prop}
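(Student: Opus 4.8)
The plan is to deduce this from the arithmeticity of the Burau image at $\zeta_3$ (Theorem~\ref{thm_arithmetic}) together with the Borel density theorem. First I would record the relevant features of the specialization at $\zeta_3$: since $\zeta_3+\zeta_3^{-1}=-1$ we have $\bZ[\zeta_3+\zeta_3^{-1}]=\bZ$, so $\rho_{\zeta_3}$ is a homomorphism $B_4\to\U_{h,\zeta_3}(\bZ)\subset\GL_3(\bZ[\zeta_3])$; and since $\Rp\zeta_3=-1/2$ lies in the open interval $(-1,0)$, the preceding proposition tells us that $h_{\zeta_3}$ is nondegenerate of signature $(2,1)$, so that $\SU_{h,\zeta_3}(\bR)\cong\SU(2,1)$ is a connected simple noncompact Lie group. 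By Proposition~\ref{prop_split_prime} (applicable since $\zeta_3\ne\zeta_3^{-1}$ in $\ol{\bQ}$), $\SU_{h,\zeta_3}$ becomes $\SL_3$ over $\ol{\bQ}$, hence is an absolutely almost simple, simply connected $\bQ$-group of type $A_2$.

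By Theorem~\ref{thm_arithmetic}, $\rho_{\zeta_3}(B_4)$ has finite index in $\U_{h,\zeta_3}(\bZ)$. On the other hand, the determinant restricts to a homomorphism $\U_{h,\zeta_3}(\bZ)\to\bZ[\zeta_3]^\times$ with kernel $\SU_{h,\zeta_3}(\bZ)$, and $\bZ[\zeta_3]^\times=\mu_6$ is finite, so $\SU_{h,\zeta_3}(\bZ)$ also has finite index in $\U_{h,\zeta_3}(\bZ)$. Consequently $\Gamma:=\rho_{\zeta_3}(B_4)\cap\SU_{h,\zeta_3}(\bZ)$, being the intersection of two finite-index subgroups of $\U_{h,\zeta_3}(\bZ)$, has finite index in $\SU_{h,\zeta_3}(\bZ)$.

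Now $\SU_{h,\zeta_3}(\bZ)$ is an arithmetic subgroup of the semisimple $\bQ$-group $\SU_{h,\zeta_3}$, hence a lattice in $\SU_{h,\zeta_3}(\bR)\cong\SU(2,1)$ by Borel--Harish-Chandra, and therefore its finite-index subgroup $\Gamma$ is a lattice as well. Since $\SU(2,1)$ is a connected semisimple Lie group with no compact factors, the Borel density theorem implies that $\Gamma$ is Zariski dense in $\SU_{h,\zeta_3}$. As $\Gamma\subseteq\rho_{\zeta_3}(B_4)$, this yields $\ol{\rho_{\zeta_3}(B_4)}^{\zar}\supseteq\ol{\Gamma}^{\zar}=\SU_{h,\zeta_3}$, which is the assertion.

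The only genuinely nontrivial ingredient is Theorem~\ref{thm_arithmetic}, the arithmeticity of $\rho_{\zeta_3}(B_4)$ in $\U_{h,\zeta_3}(\bZ)$ (Deligne--Mostow, McMullen, Venkataramana), which we take as given; everything else is soft. The only two points that merit a sentence of justification are the finiteness of the index $[\U_{h,\zeta_3}(\bZ):\SU_{h,\zeta_3}(\bZ)]$, which follows from the finiteness of $\bZ[\zeta_3]^\times$, and the verification of the hypotheses of Borel density, namely that $\SU(2,1)$ has no compact factors and that $\Gamma$ is a lattice.
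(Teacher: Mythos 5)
Your proof is correct and follows essentially the same route as the paper: both use the arithmeticity result (Theorem~\ref{thm_arithmetic}) together with the finiteness of $\bZ[\zeta_3]^\times$ to produce a finite-index (hence lattice) subgroup of $\SU_{h,\zeta_3}(\bZ)$ inside the Burau image, and then conclude by Borel density using the noncompactness of $\SU(2,1)$. The only cosmetic difference is that you intersect the image with $\SU_{h,\zeta_3}(\bZ)$ while the paper works with the preimage $H=\rho_{\zeta_3}^{-1}(\SU_{h,\zeta_3}(\bZ))$ in $B_4$, which yields the same subgroup.
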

\begin{proof} Since $U_{h,\zeta_3}(\bZ)\subset\GL_3(\bZ[\zeta_3])$, the image of the determinant is contained in the finite group $\bZ[\zeta_3]^\times$. It follows that $H := \rho_{\zeta_3}^{-1}(\SU_{h,\zeta_3}(\bZ))$ has finite index in $B_4$, and hence the image of $H$ is a lattice in $\SU_{h,\zeta_3}(\bZ)$. Since $\SU_{h,\zeta_3}(\bR)\cong \SU(2,1)$ is noncompact and simple, the Borel density theorem implies that $\rho_{\zeta_3}(H)$ is Zariski dense in $\SU_{h,\zeta_3}$ \cite[Corollary 4.5.6]{Mor15}. 
\end{proof}

Since $\rho(F)$ is not central in $\SU_{h,\zeta_3}$, applying strong approximation, we find:

\begin{thm}[Characteristic quotients of degree 1]\label{thm_degree_1} For all but finitely many primes $p$, $\SU_{h,\zeta_3}(\bF_p)$ is a characteristic quotient of $F_2$.	
\end{thm}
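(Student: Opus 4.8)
The plan is to apply Proposition \ref{prop_main} with $A = B_4$, $F = \langle x, y\rangle \unlhd B_4$, $K = \bQ(\zeta_3 + \zeta_3^{-1}) = \bQ$, the ambient group $H = \U_{h,\zeta_3}$ (base changed appropriately to make it an algebraic group over $\bQ$), and the simple simply connected group $G = \SU_{h,\zeta_3}$, with $\rho = \rho_{\zeta_3}$ restricted so that $\rho_{\zeta_3}(F) \subset \SU_{h,\zeta_3}(\bZ) \subset \SU_{h,\zeta_3}(\bQ)$. First I would verify the hypotheses of Proposition \ref{prop_main}. Condition \ref{cond_containment}, that $G = \SU_{h,\zeta_3} \subset \overline{\rho_{\zeta_3}(B_4)}^\zar$, is exactly Proposition \ref{prop_density}. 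Condition \ref{cond_rationality}, that $\rho_{\zeta_3}(F) \subset \SU_{h,\zeta_3}(\bQ)$, follows since $\rho|_F : F \to \SU_h(R_0)$ (noted in \S\ref{ss_the_representation}, using that $x, y$ are conjugate to $\sigma_1\sigma_3^{-1}$ and hence have determinant $1$) specializes at $q = \zeta_3$ to land in $\SU_{h,\zeta_3}(\bZ[\zeta_3 + \zeta_3^{-1}]) = \SU_{h,\zeta_3}(\bZ) \subset \SU_{h,\zeta_3}(\bQ)$.

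Next comes condition \ref{cond_noncentrality}: $\rho_{\zeta_3}(F) \not\subset Z(\SU_{h,\zeta_3})$. This is easy — $Z(\SU_{h,\zeta_3})$ consists of scalar matrices (the cube roots of unity times the identity), whereas $\rho_{\zeta_3}(x)$ is visibly not scalar (it has distinct eigenvalues $1, -\zeta_3, -\zeta_3^{-1}$, which are distinct since $\zeta_3 \neq \pm 1$). Condition \ref{cond_generation} — that $K = \bQ$ is generated over $\bQ$ by the traces $\tr\Ad\gamma$ for $\gamma \in F$ — is automatic here because $K = \bQ$ already, so there is nothing to generate; the base field being $\bQ$ itself trivially satisfies this. (Implicitly one should note that $\SU_{h,\zeta_3}$, being the special unitary group of a Hermitian form over the quadratic extension $\bQ(\zeta_3)/\bQ$, is indeed a connected, simply connected, absolutely simple algebraic group over $\bQ$ — this is standard for special unitary groups of nondegenerate Hermitian forms, and $h_{\zeta_3}$ is nondegenerate since $\zeta_3 \notin \{-1, i, -i\}$.)

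With all four hypotheses checked, Proposition \ref{prop_main} immediately gives that for all but finitely many primes $p$, the group $\SU_{h,\zeta_3}(\bF_p)$ and the simple group $\SU_{h,\zeta_3}(\bF_p)/Z(\SU_{h,\zeta_3}(\bF_p))$ are $B_4$-invariant quotients of $F$. By Proposition \ref{prop_Autp_invariant}, a $B_4$-invariant quotient of $F$ is precisely an $\Aut^+(F)$-invariant quotient, and by Proposition \ref{prop_bootstrap} the map $\rho_{\zeta_3}|_F$ is in fact characteristic (this requires $q + 1 = \zeta_3 + 1$ invertible in $k = \bF_p \otimes_{R_0} R$, which holds for all $p$ except possibly those dividing the norm of $\zeta_3 + 1$, i.e. finitely many — another excludable set). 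Since $F \cong F_2$, this yields that $\SU_{h,\zeta_3}(\bF_p)$ is a characteristic quotient of $F_2$ for all but finitely many $p$.

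I do not expect a serious obstacle here: the theorem is essentially an application of the machinery already set up, with Proposition \ref{prop_density} (Borel density plus Venkataramana's arithmeticity) doing the substantive work of establishing Zariski-density. The only point requiring a little care — and the closest thing to a genuine step — is bookkeeping the finitely many excluded primes coming from several independent sources: the places where strong approximation fails, the places above the "$b$" in the reductive-model discussion, the places where $G(k_v)/Z(G(k_v))$ fails to be simple, and the places dividing $\mathrm{Nm}_{\bQ(\zeta_3)/\bQ}(\zeta_3 + 1)$ where the characteristic-quotient upgrade of Proposition \ref{prop_bootstrap} might break down. Each of these is a finite set, so their union is finite, and the conclusion "for all but finitely many primes $p$" absorbs them all.
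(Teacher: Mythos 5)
Your proposal is correct and follows essentially the same route as the paper: applying Proposition \ref{prop_main} to the specialization $\rho_{\zeta_3}$ with $H = \U_{h,\zeta_3,\bQ}$, $G = \SU_{h,\zeta_3,\bQ}$, using Proposition \ref{prop_density} for Zariski density (condition \ref{cond_containment}), noncentrality of $\rho_{\zeta_3}(F)$, and then Proposition \ref{prop_bootstrap} to upgrade the $\Aut^+(F)$-invariant quotient to a characteristic one. You simply spell out the hypothesis checks (trace field, rationality, simplicity/simple-connectedness via Proposition \ref{prop_split_prime}-type reasoning, and the bookkeeping of excluded primes) that the paper's proof leaves implicit.
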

\begin{proof} The groups $\U_{h,\zeta_3},\SU_{h,\zeta_3}$ are group schemes over $\bZ[\zeta_3+\zeta_3^{-1}] = \bZ$. Applying Proposition \ref{prop_main}, to the Burau representation, taking $H = \U_{h,\zeta_3,\bQ}, G = \SU_{h,\zeta_3,\bQ}$, shows that $\SU_{h,\zeta_3}(\bF_p)$ is an $\Aut^+(F)$-invariant quotient. Proposition \ref{prop_bootstrap} shows that this is in fact a characteristic quotient.
\end{proof}

The finite group $\SU_{h,\zeta_3}(\bF_p)$ can be described in more familiar terms as follows. Recall that over a finite field $\bF_q$, up to isometry there is a unique Hermitian form on $(\bF_{q^2})^3$ relative to the extension $\bF_{q^2}/\bF_q$ \cite[Corollary 10.4]{Grove02}. Let $U_3(\bF_q)\subset\GL_3(\bF_{q^2})$ denote the corresponding group of isometries, and let $\SU_3(\bF_q)$ be the subgroup of determinant 1 matrices. The following proposition completes the proof of Theorem \ref{thm_initial}.

\begin{prop}\label{prop_finite_groups} For a prime $p$, the finite group $\SU_{h,\zeta_3}(\bF_p)$ is described as follows
	\begin{enumerate}
	\item If $p\equiv 1\mod 3$, then $\U_{h,\zeta_3}(\bF_p)\cong \GL_3(\bF_p)$, and $\SU_{h,\zeta_3}(\bF_p)\cong\SL_3(\bF_p)$
	\item If $p\equiv 2\mod 3$, then $\U_{h,\zeta_3}(\bF_p)\cong \U_3(\bF_{p})$, and $\SU_{h,\zeta_3}(\bF_p)\cong\SU_3(\bF_{p})\subset\SL_3(\bF_{p^2})$
	\item If $p = 3$, then $U_{h,\zeta_3}(\bF_3)$ is an extension of the orthogonal group $\Or_3(\bF_3)$ by the additive group $\bF_3^6$.
\end{enumerate}
\end{prop}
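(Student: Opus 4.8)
The plan is to identify, for each residue class of $p$, the finite ring over which the reduced unitary group is defined, and then to read off its points. Since $\zeta_3+\zeta_3^{-1}=-1$, the map $R_0=\bZ[q+q^{-1}]\to\bZ$ underlying the specialization at $\zeta_3$ sends $q+q^{-1}\mapsto -1$, so $R\otimes_{R_0}\bZ=\bZ[\zeta_3]$ and, after reduction, $R\otimes_{R_0}\bF_p\cong\bF_p[q]/(q^2+q+1)$. Evaluating \eqref{eq_det_of_H} at $q=\zeta_3$ (using $1+\zeta_3+\zeta_3^2=0$ and $\zeta_3^3=1$) gives $\det H_{\zeta_3}=-1$, so $H_{\zeta_3}$ is invertible in each of these reductions. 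It therefore remains to analyse $\bF_p[q]/(q^2+q+1)$ in three cases, according to whether $q^2+q+1$ splits over $\bF_p$, is irreducible, or is a square.

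For $p\equiv 1\pmod 3$: choose a primitive cube root of unity $\omega\in\bF_p$, so $\omega\ne\omega^{-1}$ (as $p\ne 3$), and regard $\bF_p$ as an $R$-algebra via $q\mapsto\omega$; the induced $R_0$-algebra structure is $q+q^{-1}\mapsto -1$. Then $\U_{h,\zeta_3}(\bF_p)=\U_h(\bF_p)$ is the value at $\bF_p$ of the base change considered in Proposition \ref{prop_split_prime}, whence $\U_{h,\zeta_3}(\bF_p)\cong\GL_3(\bF_p)$ and $\SU_{h,\zeta_3}(\bF_p)\cong\SL_3(\bF_p)$. For $p\equiv 2\pmod 3$: $q^2+q+1$ is irreducible over $\bF_p$, so $R\otimes_{R_0}\bF_p\cong\bF_{p^2}$, and since the image $\omega$ of $q$ has order $3$ with $\omega^p=\omega^2=\omega^{-1}$, the conjugation $q\mapsto q^{-1}$ is the nontrivial element of $\Gal(\bF_{p^2}/\bF_p)$. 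By Definition \ref{defn_Uh}, $\U_{h,\zeta_3}(\bF_p)$ is then the isometry group of the nondegenerate Hermitian form on $\bF_{p^2}^3$ with Gram matrix $H_{\zeta_3}$; by uniqueness of nondegenerate Hermitian forms over finite fields \cite[Corollary 10.4]{Grove02}, this is isomorphic to $\U_3(\bF_p)$, and restricting to determinant $1$ yields $\SU_3(\bF_p)\subset\SL_3(\bF_{p^2})$.

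For $p=3$: here $q^2+q+1=(q-1)^2$ in $\bF_3$, so with $\epsilon:=q-1$ we have $R\otimes_{R_0}\bF_3\cong\bF_3[\epsilon]/(\epsilon^2)$, a non-reduced ring whose involution sends $\epsilon\mapsto-\epsilon$ and whose fixed subring is $\bF_3$. Writing the reduction of $H_{\zeta_3}$ as $H_0+\epsilon H_1$, the Hermitian condition $H_{\zeta_3}^t=\overline{H_{\zeta_3}}$ forces $H_0=H_0^t$ and $H_1=-H_1^t$ over $\bF_3$, and $\det H_0=-1$ by the first paragraph, so $(\bF_3^3,H_0)$ is a nondegenerate orthogonal space; denote its orthogonal group by $\Or_3(\bF_3)$. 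Now write a general element of $\GL_3(\bF_3[\epsilon]/(\epsilon^2))$ as $g_0+\epsilon g_1$ with $g_0\in\GL_3(\bF_3)$ and $g_1\in M_3(\bF_3)$; expanding $(g_0+\epsilon g_1)^t H_{\zeta_3}(g_0-\epsilon g_1)=H_{\zeta_3}$ and separating the $\epsilon^0$- and $\epsilon^1$-parts gives
\begin{align*}
g_0^t H_0 g_0 &= H_0,\\
g_1^t H_0 g_0 - g_0^t H_0 g_1 &= H_1 - g_0^t H_1 g_0 .
\end{align*}
The first equation says $g_0\in\Or_3(\bF_3)$. In the second, both sides are antisymmetric, and for each fixed $g_0\in\Or_3(\bF_3)$ the map $g_1\mapsto g_1^t H_0 g_0 - g_0^t H_0 g_1$ factors as $g_1\mapsto M:=(H_0 g_0)^t g_1\mapsto M^t-M$, the composition of a linear bijection of $M_3(\bF_3)$ with the surjection onto the antisymmetric matrices whose kernel is the symmetric matrices; hence it is a surjection of $M_3(\bF_3)$ onto the antisymmetric matrices with a $6$-dimensional kernel, so the second equation always has a solution and its solution set is a coset of that $6$-dimensional space. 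Consequently $g_0+\epsilon g_1\mapsto g_0$ is a surjective homomorphism $\U_{h,\zeta_3}(\bF_3)\to\Or_3(\bF_3)$ whose kernel $\{\,I+\epsilon g_1 : H_0 g_1\text{ symmetric}\,\}$ is abelian — the multiplication being $(I+\epsilon g_1)(I+\epsilon g_1')=I+\epsilon(g_1+g_1')$ — hence isomorphic to $(\bF_3^6,+)$. This realises $\U_{h,\zeta_3}(\bF_3)$ as the asserted extension.

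The only place where genuine work is required is the case $p=3$: parts (a) and (b) are immediate consequences of Proposition \ref{prop_split_prime} and of the classification of Hermitian forms over finite fields, whereas (c) needs the dual-number expansion above together with the linear-algebra argument showing that the first-order unitarity equation is always solvable and that its solution space has dimension exactly $6$.
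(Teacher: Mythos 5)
Your proof is correct and takes essentially the same route as the paper: split into the three cases according to how $q^2+q+1$ behaves mod $p$, invoke Proposition \ref{prop_split_prime} in the split case, the uniqueness of nondegenerate Hermitian forms over finite fields in the inert case, and the dual-number expansion $g=g_0+\epsilon g_1$ at $p=3$. The only (harmless) divergence is at $p=3$: the paper first passes to the eigenbasis so that the Gram matrix becomes the constant diagonal matrix $D=\diag(-1,1,1)$, making the first-order condition homogeneous and surjectivity onto $\Or_3(\bF_3)$ immediate, whereas you keep $H=H_0+\epsilon H_1$ in the original basis and instead supply the (correct) extra argument that the resulting inhomogeneous linear equation is always solvable with a $6$-dimensional solution space.
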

\begin{proof}

At primes $p\equiv 1\mod 3$, the statement follows from Proposition \ref{prop_split_prime} where $k = \bF_p$, with $R$-algebra structure given by sending $q$ to an element of order 3.

At primes $p\equiv 2\mod 3$, $\bZ[\zeta_3]\times_\bZ\bF_p\cong\bF_{p^2}$, so $U_{h,\zeta_3,\bF_p}$ is the usual unitary group relative to the extension $\bF_{p^2}/\bF_p$.

At $p = 3$, $\bZ[\zeta_3]\otimes_\bZ\bF_3 = \bF_3[x]/(x^2+x+1)$, where $x$ is the image of $\zeta_3\otimes 1$. Let $\epsilon := x-1$, then this becomes the ring of dual numbers $\bF_3[\epsilon]$ (where $\epsilon^2 = 0$), with conjugation given by $\epsilon\mapsto -\epsilon$. To simplify calculations we may use the orthogonal basis $\{v_1,v_2,v_3\}$ given in \S\ref{ss_the_representation} to replace $H$ with the diagonal matrix $D = D_{\zeta_3} = \diag(-1,1,1)$. For any field $k$, write $g\in\GL_3(k[\epsilon])$ as $g_0+\epsilon g_1$, where $g_0\in\GL_3(k)$ and $g_1\in M_3(k)$. The relation $g^tD\ol{g} = D$ becomes
$$g_0^tDg_0 + \epsilon(g_1^tDg_0 - g_0^tDg_1) = D$$
Thus we have $g_0^tDg_0 = D$, and $g_1^tDg_0 = g_0^tDg_1$. The first relation asserts that $g_0$ lies in the orthogonal group for $D$, and the second asserts that $g_0^tDg_1$ is symmetric. For fixed $g_0$, this is a linear equation in the coordinates of $g_1$, hence defines an affine space. For $g_0 = I_3$, this defines an additive group of dimension 6, so the map $\U_{h,\zeta_3,\bF_3}\lra \Or_{3,\bF_3}$ sending $g\mapsto g_0$ realizes $\U_{h,\zeta_3}(\bF_3)$ as an extension of $\Or_{3}(\bF_3)$ by $\bF_3^6$.
\end{proof}

\begin{remark} By the discussion in \S\ref{sss_canonical}, the Burau representation $\rho$ corresponds to a \emph{canonical} rank 3 $R$-local system on punctured elliptic curves with infinite monodromy (i.e., $\rho|_F$ has infinite image). This infinitude was critical in the proof of Theorem \ref{thm_degree_1}. We note that in general, low rank canonical local systems with infinite monodromy seem to be, in some sense, quite rare: in a recent preprint of Landesman and Litt \cite{LL23}, they show that if $r < \sqrt{g+1}$, then a canonical $\bC$-local system of rank $r$ on $S_{g,n}$ has \emph{finite monodromy} (i.e., the associated representation of $\pi_1(S_{g,n})$ has \emph{finite image} in $\GL_r(\bC)$).

\end{remark}

\subsection{Specialization at points on the unit circle which are not roots of unity}


In the previous section we showed that infinitely many groups of the form $\SU_3(\bF_p)$ and $\SL_3(\bF_p)$ are characteristic quotients of $F_2$. In \S\ref{section_tiep}, we will show the much stronger fact (Theorem \ref{thm_tiep}) that implies that for every $q\ge 7$, both $\SU_3(\bF_q)$ and $\SL_3(\bF_q)$ are finite simple characteristic quotients of $F_2$. However, because strong approximation forces us to discard an indeterminate finite set of primes, we cannot hope to prove such a result using strong approximation. Nonetheless, the method of strong approximation provides a general framework for understanding why something like Theorem \ref{thm_tiep} could be true. In this section we illustrate this with an intermediate result (Theorem \ref{thm_arbitrary_degree} below), obtained by considering more general specializations of the Burau representation.

\begin{thm}[{\cite[Theorem 1]{Sto10}}]\label{thm_sto} For any complex number $t$ which lies on the unit circle, is not a root of unity, and is sufficiently close to 1, $\rho_t(B_4)$ is dense in the Euclidean topology on $U_{h,t}(\bR)\subset\GL_3(\bC)$.
\end{thm}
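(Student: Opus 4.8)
The plan is to exploit that for $t$ close to $1$ on the unit circle we are working inside a \emph{compact} group: since $t\ne 1$ we have $0<\Rp t<1$, so $h_t$ is positive definite, $\U_{h,t}(\bR)$ is (conjugate to) the standard $\U(3)\subset\GL_3(\bC)$ — compact and connected — and $\SU_{h,t}(\bR)$ is a copy of $\SU(3)$, namely the kernel of $\det\colon\U_{h,t}(\bR)\twoheadrightarrow\U(1)$. Recall that $\rho_t|_F$ lands in $\SU_{h,t}(\bR)$, that $\det\rho_t(\sigma_1)=-t$, and that $X:=\rho_t(x)$ and $Y:=\rho_t(y)$ each have the three distinct eigenvalues $1,-t,-t^{-1}$ (distinct because $t\ne\pm1$), so in particular are non-scalar. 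It suffices to show that $\rho_t(F)$ is dense in $\SU_{h,t}(\bR)$: given this, its Euclidean closure satisfies $\ol{\rho_t(F)}=\SU_{h,t}(\bR)\subseteq\ol{\rho_t(B_4)}$, while $\det\bigl(\ol{\rho_t(B_4)}\bigr)$ is a closed subgroup of $\U(1)$ containing $\langle-t\rangle$, which is dense since $-t$ is not a root of unity; hence $\det\bigl(\ol{\rho_t(B_4)}\bigr)=\U(1)$, and a closed subgroup of $\U_{h,t}(\bR)$ containing $\ker\det$ and surjecting onto $\U(1)$ must be all of $\U_{h,t}(\bR)$.

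To prove density of $\rho_t(F)$ in $\SU_{h,t}(\bR)\cong\SU(3)$, I would first establish Zariski density: $\ol{\rho_t(F)}^\zar=\SL_{3,\bC}$. At $q=\zeta_3$ this follows from Proposition \ref{prop_density} and $F\unlhd B_4$: the Zariski closure $N$ of $\rho_{\zeta_3}(F)$ is a closed subgroup of $\SU_{h,\zeta_3}$ that is normalized by $\SU_{h,\zeta_3}$ (as $\ol{\rho_{\zeta_3}(B_4)}^\zar\supseteq\SU_{h,\zeta_3}$), hence normal in it; $\SU_{h,\zeta_3}$ is simple modulo its finite center and $\rho_{\zeta_3}(x)$ is non-scalar, so $N$ is not central, forcing $N=\SU_{h,\zeta_3}$. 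Since Zariski density of a finitely generated subgroup of $\GL_3$ persists under all but finitely many specializations in the one-parameter family $q=t$ (alternatively one may cite the Zariski density of the Burau image of $B_4$ over $\bQ(q)$), we get $\ol{\rho_t(F)}^\zar=\SL_{3,\bC}$ for all $t$ outside a finite set, in particular for every $t$ in a suitable punctured neighbourhood of $1$. Fix such a $t$ and let $G:=\ol{\rho_t(F)}$ be the Euclidean closure, a compact subgroup of $\SU_{h,t}(\bR)$ with $\ol{G}^\zar=\SL_{3,\bC}$. Its identity component $G^0$ has finite index in $G$, so $\ol{G^0}^\zar$ is a connected finite-index algebraic subgroup of $\SL_3$, hence equals $\SL_3$; comparing dimensions, $\dim_\bR G^0=\dim_\bC\SL_3=8=\dim_\bR\SU(3)$, so $G^0=\SU_{h,t}(\bR)$ and therefore $G=\SU_{h,t}(\bR)$. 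With the first paragraph, this yields $\ol{\rho_t(B_4)}=\U_{h,t}(\bR)$.

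The one non-formal ingredient is the persistence of Zariski density under specialization used above (equivalently, the bigness of the generic Burau image), which is where the braid relations genuinely enter. A more computational alternative — in the spirit of \S\ref{section_tiep} — avoids it by analyzing the Lie algebra $\mathfrak{g}$ of $G\le\SU(3)$: it is reductive and contains the two distinct one-dimensional tori $\ol{\langle X\rangle}$ and $\ol{\langle Y\rangle}$; since the only maximal proper reductive subalgebras of $\mathfrak{su}(3)$ are, up to conjugacy, $\mathfrak{u}(2)$ and $\mathfrak{so}(3)$, and since $X$ and $Y$ do not commute, share no eigenvector, and preserve no common nondegenerate symmetric bilinear form — each a Zariski-open condition on $t$, verifiable at $q=\zeta_3$ or by direct computation over $\bQ(q)$ — one concludes $\mathfrak{g}=\mathfrak{su}(3)$, hence $G=\SU_{h,t}(\bR)$ as before. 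In either approach the hypothesis that $t$ be close to $1$ serves only to make $h_t$ positive definite, so that we work in a compact group, and to keep $t$ out of the finite exceptional locus.
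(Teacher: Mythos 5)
First, a point of comparison: the paper does not prove this statement at all --- Theorem \ref{thm_sto} is imported verbatim from Stoimenow \cite{Sto10} --- so your attempt has to stand on its own. Its architecture is sensible: for $t$ near $1$ the form $h_t$ is positive definite, so you are inside the compact group $\U(3)$; the determinant reduction using that $-t=\det\rho_t(\sigma_1)$ has infinite order is correct; and the principle ``a compact subgroup of $\SU(3)$ whose Zariski closure is $\SL_3(\bC)$ equals $\SU(3)$'' is valid, though your ``comparing dimensions'' silently invokes Chevalley's theorem that the complex Zariski closure of a compact linear group is its complexification, of complex dimension $\dim_\bR$ of the group.

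The genuine gap is the step you dispose of in one sentence: that $\rho_t(F)$ is Zariski dense in $\SL_3(\bC)$ for \emph{every} $t$ in a punctured neighbourhood of $1$. Neither of your justifications suffices as stated. Zariski density of the generic Burau image over $\bQ(q)$ does not by itself control specializations: the soft specialization arguments only show the non-dense locus is a countable union of proper closed subsets of the $t$-line, i.e.\ a countable set, and a countable set can accumulate at $1$, which would kill the ``sufficiently close to $1$'' conclusion. What you need is that the non-dense locus is Zariski closed (hence finite), equivalently that generating a Zariski-dense subgroup is an \emph{open} condition on pairs in $\SL_3(\bC)^2$; this is true in characteristic zero, but it is a genuine theorem resting on the classification of maximal closed subgroups of $\SL_3$ (reducible, imprimitive, form-preserving, plus finitely many finite primitive groups of bounded order) --- essentially the same input as the statement you are proving, so it cannot be waved through as ``standard persistence''. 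Your third paragraph contains the right ingredients for an honest completion: work directly with the compact closure $G\le\SU(3)$, note that $X$ and $Y$ lie in $G^0$ because each lies in the closure of the cyclic group it generates, which is a connected circle subgroup since $-t$ has infinite order; hence if $G\ne\SU(3)$ then $X,Y$ lie in a conjugate of $\U(2)$ or $\SO(3)$, the maximal connected proper subgroups. But instead of ``Zariski-open, checked at $\zeta_3$'' (which again only yields an unspecified finite exceptional set), you should quote the paper's own computations in \S\ref{section_tiep}: step (a4) shows a common $X,Y$-invariant line forces $t^4=1$, and step (a6) shows a common invariant nondegenerate symmetric bilinear form forces $t^2=1$, so both degeneracies are impossible for every non-root-of-unity $t$. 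With those points made explicit your proof closes (and in fact yields density on the whole arc $0<\Rp t<1$, the hypothesis ``close to $1$'' serving only to ensure definiteness); as written, the main line rests on an unproved, and via the $\bQ(q)$ fallback insufficient, specialization claim.
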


Since $\rho_t(B_4)$ is dense in the Euclidean topology on $U_{h,t}(\bR)$, it is also Zariski dense. This leads to the following generalization of Theorem \ref{thm_degree_1}.

\begin{thm}[$\Aut^+(F)$-invariant quotients of arbitrary degree]\label{thm_arbitrary_degree} Let $d\ge 1$ be an integer. Then
\begin{enumerate}
\item For infinitely many primes $p$, $\SL_3(\bF_{p^d})$ is a characteristic quotient of $F$, and
\item For infinitely many primes $p$, $\SU_3(\bF_{p^d})$ is a characteristic quotient of $F$.
\end{enumerate}
\end{thm}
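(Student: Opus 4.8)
The plan is to run the strong-approximation argument of Proposition \ref{prop_main} on a well-chosen specialization $\rho_t$ of the Burau representation, to bootstrap $\Aut^+(F)$-invariance to $\Aut(F)$-invariance via Proposition \ref{prop_bootstrap} exactly as in the proof of Theorem \ref{thm_degree_1}, and to add one new ingredient: a Chebotarev argument forcing the residue fields that occur in strong approximation to be exactly $\bF_{p^d}$, the split/inert behaviour of a quadratic extension distinguishing the $\SL_3$ case from the $\SU_3$ case.

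First I would fix an algebraic number $t\in\bC$ on the unit circle that is not a root of unity and is close enough to $1$ for Theorem \ref{thm_sto} to apply, and set $K_0:=\bQ(t+t^{-1})$ and $K:=\bQ(t)$, so that $K/K_0$ is the quadratic extension carrying the conjugation $q\mapsto q^{-1}$. The traces $\tr\Ad\rho_t(\gamma)$ for $\gamma\in F$ generate $K_0$ over $\bQ$ automatically: since $\tr\Ad g=(\tr g)\overline{\tr g}-1$ on $\SU_h$ they lie in $\bZ[t+t^{-1}]$, and from $\tr\rho_t(x^n)=1+(-1)^n(t^n+t^{-n})$ one checks that $\tr\Ad\rho_t(x)$ and $\tr\Ad\rho_t(x^2)$ already recover $t+t^{-1}$ (using $t+t^{-1}\ne1$, which holds as $t$ is near $1$). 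Beyond this I would only require that $K/\bQ$ have the shape needed in the last step; since $t+t^{-1}$ is algebraic exactly when $t$ is, and the norm-one elements of a CM field lie on the unit circle, algebraic $t$ near $1$ realizing any prescribed such $K$ exist, and I would merely record their existence rather than exhibit one.

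With $t$ fixed, put $H:=\U_{h,t,K_0}$ and $G:=\SU_{h,t,K_0}$; the latter is connected, simple and simply connected, being a $K_0$-form of $\SL_3$ by Proposition \ref{prop_split_prime}. The Burau specialization $\rho_t:B_4\to\U_{h,t}(\bZ[t+t^{-1}])\subset H(K_0)$ satisfies $\rho_t(F)\subset\SU_{h,t}(\bZ[t+t^{-1}])\subset G(K_0)$, and $\rho_t(x)$ has the three distinct eigenvalues $1,-t,-t^{-1}$, hence is not scalar, so $\rho_t(F)\not\subset Z(G)(K_0)$. By Theorem \ref{thm_sto}, $\rho_t(B_4)$ is dense in $\U_{h,t}(\bR)$ for the Euclidean topology, hence Zariski dense, and since $\SU_{h,t}(\bR)$ is Zariski dense in $G$ the $K_0$-subgroup $\overline{\rho_t(B_4)}^{\zar}$ contains $G$ (as in Proposition \ref{prop_density}). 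Thus hypotheses \ref{cond_generation}--\ref{cond_noncentrality} of Proposition \ref{prop_main} hold with $A=B_4$, and we conclude that for all but finitely many places $v$ of $K_0$ the finite group $G(k_v)=\SU_{h,t}(k_v)$ is a $B_4$-invariant, hence $\Aut^+(F)$-invariant, quotient of $F$. Discarding also the finitely many $v$ at which $q+1$ fails to be invertible in $k_v\otimes_{R_0}R$, Proposition \ref{prop_bootstrap} (with $k_0=k_v$) makes each such quotient characteristic: $\rho_{k_v}|_F:F\twoheadrightarrow\SU_{h,t}(k_v)$.

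It remains to identify $\SU_{h,t}(k_v)$ and to exhibit infinitely many admissible $v$ with $k_v\cong\bF_{p^d}$. For a place $v$ of $K_0$, $R\otimes_{R_0}k_v=k_v[X]/(X^2-(t+t^{-1})X+1)$ is $k_v\times k_v$ when $v$ splits in $K/K_0$ and the quadratic extension of $k_v$ when $v$ is inert, so by Proposition \ref{prop_split_prime} one has $\SU_{h,t}(k_v)\cong\SL_3(k_v)$ in the split case and $\SU_{h,t}(k_v)\cong\SU_3(k_v)$ in the inert case. So it is enough to pick $t$ so that $K_0$ has infinitely many primes of residue degree exactly $d$ that split in $K/K_0$ (for part (a)), respectively that are inert in $K/K_0$ (for part (b)); by the Chebotarev density theorem applied to the Galois closure of $K/\bQ$, such primes exist once the Galois group contains a Frobenius element with the required cycle type and the required behaviour in $K/K_0$, which is secured by the choice of $K$ in the first step — for instance, for part (b) it suffices that $K$ be cyclic of degree $2d$ over $\bQ$. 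The real obstacle is this combinatorial juggling of the single parameter $t$: simultaneously close enough to $1$ for Theorem \ref{thm_sto}, not a root of unity, and with $K=\bQ(t)$ of the Galois-theoretic shape Chebotarev needs. The argument is soft — unlike \S\ref{section_tiep} it pins down no individual prime $p$ — but through strong approximation it makes transparent why $\SL_3(\bF_{p^d})$ and $\SU_3(\bF_{p^d})$ must appear.
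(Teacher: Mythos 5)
Your overall route is the paper's: specialize Burau at an algebraic $t$ on the unit circle, not a root of unity, close to $1$ so that Theorem \ref{thm_sto} applies, feed this into Proposition \ref{prop_main}, upgrade to characteristic via Proposition \ref{prop_bootstrap}, and use Chebotarev together with the split/inert behaviour of $\bQ(t)/\bQ(t+t^{-1})$ to prescribe the residue fields. One sub-step of yours is genuinely better than the paper's: your claim that the trace field is all of $\bQ(t+t^{-1})$ is correct and sharper than Lemmas \ref{lemma_trace_field} and \ref{lemma_nonzero_trace}. Writing $a=\tr\Ad\rho_t(x)=s^2-2s$ and $b=\tr\Ad\rho_t(x^2)=s^4-2s^2$ with $s=t+t^{-1}$, one gets $s=\bigl(b-a^2-2a\bigr)/\bigl(4(a+1)\bigr)$ whenever $s\ne 1$, so the trace field equals $\bQ(t+t^{-1})$ outright; this removes the paper's need for $\bQ(t+t^{-1})$ to be Galois with nonzero trace and answers a point the authors explicitly leave open.

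The gap is where you "merely record existence": the paper's proof of Theorem \ref{thm_arbitrary_degree} consists precisely of exhibiting the parameter $t$ (namely $t_{\zeta,k}=(\zeta+k)/(\zeta^{-1}+k)$ with $\zeta$ a primitive $3p$-th root of unity, $p\equiv 1\bmod 2d$, $k\gg 0$), verifying simultaneously that it is on the unit circle, tends to $1$, is not a root of unity, generates the intended field, and that the Galois group of $\bQ(t)/\bQ$ contains Frobenius classes producing degree-$d$ primes of $\bQ(t+t^{-1})$ that are inert, respectively split, in $\bQ(t)$. Your proposal asserts that norm-one elements of a prescribed CM field near $1$ with $\bQ(t)=K$ exist, but gives no construction or argument (one is needed: e.g. $t=(c+\sqrt{-\delta})/(c-\sqrt{-\delta})$ with $c\in K^+$ chosen so that $c^2/\delta$ generates $K^+$, then let $c\to\infty$; avoiding roots of unity and proper subfields requires a short argument). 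Moreover, for part (a) you never specify the Galois shape of $K$, and the one shape you do propose (cyclic of degree $2d$, offered for part (b)) cannot work for part (a) when $d$ is even: in a cyclic group of order $2d$ with $d$ even, no element has order $d$ in the full group while its image in the degree-$d$ quotient still has order $d$, so there are no degree-$d$ primes of $K_0$ split in $K$; for (a) one must take a different shape, e.g. $K=K_0\cdot\bQ(i)$ with $K_0$ real cyclic of degree $d$ and $p\equiv 1\bmod 4$. A last small point: in the inert case the identification $\SU_{h,t}(k_v)\cong\SU_3(k_v)$ is not Proposition \ref{prop_split_prime} but the lemma of \S\ref{section_tiep} (or the argument of Proposition \ref{prop_finite_groups}), and needs $t^2+1\not\equiv 0$ and uniqueness of Hermitian forms over finite fields — harmless, since only finitely many places are affected, but it should be said.
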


Unlike the specialization at $\zeta_3$, where we work over the explicit ring $\bZ[\zeta_3+\zeta_3^{-1}] = \bZ$, here, for every $p^d$, we must effectively describe an infinite sequence of algebraic numbers $t$ on the unit circle converging to $1$ for which the associated number fields have a prime of inertia degree $d$ over $p$. Moreover, there is also the added complexity of dealing with the trace field $\bQ(\tr\Ad\rho_t(F))$, which in the $t = \zeta_3$ case was just $\bQ$; ideally this trace field is just $\bQ(t+t^{-1})$, but we do not know if this is true. However it is not difficult to show that it contains $\bQ(t^3+t^{-3})$, and that for appropriately chosen $t$, we have $\bQ(t+t^{-1}) = \bQ(t^3+t^{-3})$. We begin with a few lemmas.

\begin{lemma}\label{lemma_trace_field} The trace field $\bQ(\tr\Ad\rho(\gamma)\;|\;\gamma\in F)$ contains $\bQ(q^3+q^{-3})$.	
\end{lemma}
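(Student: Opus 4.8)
The plan is to exhibit a specific element $\gamma\in F$ whose adjoint trace generates a field containing $\bQ(q^3+q^{-3})$, and then argue that this forces the full trace field to contain $\bQ(q^3+q^{-3})$. The natural first move is to look at the central element of $B_4$: recall that $Z(B_4)=\langle(\sigma_1\sigma_2\sigma_3)^4\rangle$, and it is classical that $\xi((\sigma_1\sigma_2\sigma_3)^4)=\id$, so $(\sigma_1\sigma_2\sigma_3)^4$ acts trivially by conjugation on $F$. However, $(\sigma_1\sigma_2\sigma_3)^4$ is \emph{not} in $F$, so this observation is only a hint; instead one wants an element of $F$ whose image under $\rho$ is a scalar (or close to it). A cleaner candidate: since $\det\rho(\sigma_i)=-q$ and $\ell(\sigma_1\sigma_2\sigma_3)=3$, the element $(\sigma_1\sigma_2\sigma_3)^4$ has $\rho$-determinant $q^{12}$, and being central in $B_4$ its image $\rho((\sigma_1\sigma_2\sigma_3)^4)$ commutes with the Zariski-dense (over $\bQ(q)$) image $\rho(B_4)$ in $\GL_3(\bQ(q))$, hence is a scalar matrix $\lambda I_3$ with $\lambda^3=q^{12}$, so $\lambda=q^4$ (choosing the consistent root). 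Now I would write $(\sigma_1\sigma_2\sigma_3)^4$ as a word $w$ in $x=\sigma_1\sigma_3^{-1}$ and $y=\sigma_2\sigma_1\sigma_3^{-1}\sigma_2^{-1}$ times a power of a central-in-$B_4$ element — more carefully, find $w\in F$ with $\rho(w)=q^4 I_3$ (equivalently $\rho(w)$ central), which should be possible since $F$ has finite index... no: $F$ is normal but of infinite index. Let me reconsider.

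The correct approach: since $F\lhd B_4$ with $B_4/FZ(B_4)$ finite (indeed $B_4/F\cong\bZ$ via $\ell$, and $Z(B_4)\cap F$ has finite index in $Z(B_4)$), some power $(\sigma_1\sigma_2\sigma_3)^{4m}$ lies in $F$; call it $w_0\in F$. Its image $\rho(w_0)=q^{4m}I_3$ is scalar, so $\tr\Ad\rho(w_0)=\dim\SL_3=8$, which is useless. So scalars are the wrong target. Instead I would use a different structural fact: the element $\sigma_1\sigma_2^{-1}\in F$? Check: is $\sigma_1\sigma_2^{-1}\in\langle x,y\rangle$? We have $x=\sigma_1\sigma_3^{-1}$, and by the braid relations one can likely express short words. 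The genuinely robust route, which I expect the authors take, is: compute $\rho(x)$ and $\rho(y)$ explicitly (given in the excerpt), observe $\tr\rho(x)=\tr\rho(y)=-q-q^{-1}+1$ (indeed $\rho(x)$ has eigenvalues $1,-q,-q^{-1}$), and then compute $\tr\Ad\rho(x)$. Since $\Ad\rho(x)$ on $\mathfrak{sl}_3$ has eigenvalues $\mu_i/\mu_j$ for the eigenvalues $\mu_i\in\{1,-q,-q^{-1}\}$ of $\rho(x)$, we get eigenvalues $1,1,-q,-q^{-1},-q,-q^{-1},q^2,q^{-2}$ minus... wait, $\Ad$ on $\mathfrak{gl}_3$ has eigenvalues $\mu_i\mu_j^{-1}$ for all $i,j$ (nine of them), and on $\mathfrak{sl}_3$ we drop one copy of $1$, giving $\{1,1,-q,-q^{-1},-q,-q^{-1},q^2,q^{-2}\}$; hence $\tr\Ad\rho(x)=2-2q-2q^{-1}+q^2+q^{-2}=(q+q^{-1})^2-2(q+q^{-1})$. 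This lies in $\bQ(q+q^{-1})$ but not obviously in $\bQ(q^3+q^{-3})$. So a single generator is not enough — consistent with the lemma only claiming the field \emph{contains} $\bQ(q^3+q^{-3})$, a weaker assertion.

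Given that, here is the plan I would actually execute. Step 1: pick any $\gamma\in F$ whose $\rho$-image has all distinct eigenvalues and includes both $q$ and $q^{-1}$ "nontrivially", so that $\tr\Ad\rho(\gamma)$ is a genuine Laurent polynomial in $q$; for instance take a word like $\gamma=xyx^{-1}y^{-1}$ or one of the explicit examples in the footnotes. Step 2: show that the trace field $L:=\bQ(\tr\Ad\rho(\gamma):\gamma\in F)$ is stable under the Galois involution $q\mapsto q^{-1}$ (clear, since $H^t=\bar H$ makes the whole representation "conjugation-symmetric", so traces are fixed by $q\mapsto q^{-1}$), hence $L\subseteq\bQ(q+q^{-1})$ automatically and $L$ is a subfield of the rank-infinity... no, $L\subseteq\bQ(q)$ and $L$ is $q\mapsto q^{-1}$-invariant so $L\subseteq\bQ(q)^{\langle q\mapsto q^{-1}\rangle}=\bQ(q+q^{-1})$. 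Step 3 (the crux): show $q^3+q^{-3}\in L$. For this I would exhibit one explicit word $\gamma_0\in F$ with $\tr\Ad\rho(\gamma_0)$ a Laurent polynomial in $q$ of degree exactly $3$ (or $6$), symmetric under $q\mapsto q^{-1}$; then $\tr\Ad\rho(\gamma_0)\in\bQ[q^3+q^{-3},q+q^{-1}]$ and by combining with lower-degree traces (like $\tr\Ad\rho(x)\in\bQ[q+q^{-1}]$ computed above) we solve for $q^3+q^{-3}$. Concretely, the central-ish element $y\sigma$-conjugates or the element $x^2$ might give degree $\le 4$; I'd want degree-$3$ monomials $q^3,q^{-3}$ to appear, which happens once $\rho(\gamma_0)$ has an eigenvalue whose ratio to another is $q^3$ or $q^{-3}$. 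Since $\rho(\sigma_1\sigma_2\sigma_3)$ should have eigenvalues roughly scaling like powers of $q$, a word like $(\sigma_1\sigma_2\sigma_3)^{3}\cdot(\text{correction to land in }F)$ could have eigenvalue ratios $q^{\pm3}$; alternatively, the explicit word from the footnote, $w=xyx^{-2}y^2$, has $\tr\rho(w)$ of degree $5$, so $\tr\Ad\rho(w)$ has degree $10$ and will certainly involve $q^3+q^{-3}$ after subtracting off the $\bQ[q+q^{-1}]$-part coming from lower-degree data. Step 4: assemble — $L$ contains $\bQ$, is $q\mapsto q^{-1}$-invariant, and contains the element produced in Step 3, so $\bQ(q^3+q^{-3})\subseteq L$, which is the claim.

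\textbf{Main obstacle.} The hard part is Step 3: pinning down a single explicit word $\gamma_0\in F$ for which the Laurent polynomial $\tr\Ad\rho(\gamma_0)$ provably contains the monomials $q^{\pm3}$ (not just $q^{\pm2}$ or lower) with a coefficient that, after clearing the contributions of lower-degree traces known to lie in $\bQ[q+q^{-1}]$, lets one isolate $q^3+q^{-3}$ over $\bQ$. This is essentially a finite but slightly delicate computation with $3\times3$ matrices over $\bZ[q,q^{-1}]$; I expect the authors simply write down such a $\gamma_0$ (likely a short word built from $x,y$ whose $\rho$-image they already computed) and verify the leading behavior. The symmetry under $q\mapsto q^{-1}$ and the elementary algebra of adjoint eigenvalues ($\mu_i\mu_j^{-1}$) are routine; everything reduces to making one good explicit choice.
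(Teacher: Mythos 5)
Your setup is right (the adjoint eigenvalues $\mu_i\mu_j^{-1}$ on $\mathfrak{sl}_3$, and your value $\tr\Ad\rho(x)=(q+q^{-1})^2-2(q+q^{-1})$ agrees with the paper), but the crux — actually producing $q^3+q^{-3}$ inside the trace field — is left unexecuted, and you flag it yourself as the main obstacle. Moreover, the route you propose for closing it points in the wrong direction: you look for a word $\gamma_0\in F$ whose $\rho$-image has an eigenvalue ratio $q^{\pm3}$, so that $q^{\pm3}$ monomials appear in a single adjoint trace. No such word is needed, and none is used in the paper. The paper uses only $x$ and $x^2$: since $\rho(x^2)$ has eigenvalues $1,q^2,q^{-2}$, one computes $\tr\Ad\rho(x^2)=q^4+2q^2+2+2q^{-2}+q^{-4}$, and then the explicit identity
$$(\tr\Ad\rho(x))^2 - \tr\Ad\rho(x^2) - 6\,\tr\Ad\rho(x) = -4(q^3+q^{-3})$$
finishes the proof. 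The point you miss is that the trace \emph{field} is closed under multiplication, so the $q^{\pm3}$ terms can (and do) come from \emph{squaring} $\tr\Ad\rho(x)$ — the cross term $(q^2+q^{-2})(q+q^{-1})$ — rather than from any new group element; you even considered $x^2$ but dismissed it precisely because it lacks $q^{\pm3}$ eigenvalue ratios. So the gap is genuine: the missing ingredient is this polynomial identity (or any equivalent observation that the subfield generated by $\tr\Ad\rho(x)$ and $\tr\Ad\rho(x^2)$, both polynomials in $q+q^{-1}$, already contains $q^3+q^{-3}=(q+q^{-1})^3-3(q+q^{-1})$), and without it your Step 3 remains an unverified search for a more complicated word such as $xyx^{-2}y^2$, whose adjoint trace you would still have to compute and massage. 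Your Step 2 (Galois invariance under $q\mapsto q^{-1}$) is correct but not needed for the containment claimed in the lemma.
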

\begin{proof} The trace of the adjoint action of $\gamma\in \SU_h(\bQ(q+q^{-1}))$ on the Lie algebra of $\SU_{h,\bQ(q+q^{-1})}$ can be calculated after base change to $\bQ(q)$, in which case it can be identified with the trace of the image of $\gamma$ in $\SU_h(\bQ(q))\cong\SL_3(\bQ(q))$ acting on the Lie algebra of $\SL_{3,\bQ(q)}$. This latter Lie algebra consists of the trace 0 matrices, and the adjoint action of $\gamma$ on a trace zero matrix $X$ is simply given by conjugation by $\gamma$. An explicit calculation shows that:
\begin{eqnarray*}
\tr\Ad\rho(x) = q^2-2q+ 2-2q^{-1}+q^{-2} \\
\tr\Ad\rho(x^2) = q^4+2q^2+2+2q^{-2}+q^{-4}
\end{eqnarray*}
From this, one calculates that
$$(\tr\Ad\rho(x))^2 - \tr\Ad\rho(x^2) - 6\tr\Ad\rho(x) = -4q^3 - 4q^{-3}$$
It follows that the trace field contains $\bQ(q^3+q^{-3})$.
\end{proof}

\begin{lemma}\label{lemma_nonzero_trace} Let $t\in\Qbar$ satisfy $|t| = 1$. Suppose $\bQ(t+t^{-1})$ is Galois over $\bQ$ and that $t+t^{-1}$ has nonzero trace over $\bQ$. Then $\bQ(t+t^{-1}) = \bQ(t^3+t^{-3})$.
\end{lemma}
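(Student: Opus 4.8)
The plan is to rule out every value of $d:=[\bQ(t+t^{-1}):\bQ(t^3+t^{-3})]$ except $1$, using the two hypotheses. Write $\alpha:=t+t^{-1}$ and $\beta:=t^3+t^{-3}$. Expanding $(t+t^{-1})^3$ gives the identity $\beta=\alpha^3-3\alpha$, so $\beta\in\bQ(\alpha)$ and $\alpha$ is a root of $f(X):=X^3-3X-\beta\in\bQ(\beta)[X]$; hence $\bQ(\beta)\subseteq\bQ(\alpha)$ and $d\in\{1,2,3\}$, and we must show $d=1$. Suppose for contradiction that $d\ge2$, so $\alpha\notin\bQ(\beta)$, in particular $\alpha\notin\bQ$. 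If $\beta=\pm2$ then $f$ factors completely over $\bQ$ (it has the rational root $\mp1$), forcing $\alpha\in\bQ$, a contradiction; so $\beta\neq\pm2$, and since $\disc f=27(4-\beta^2)\neq0$, $f$ is separable. A direct computation (using $\omega^3=1$) identifies the three distinct roots of $f$ as $\alpha$, $\alpha_1:=\omega t+\omega^{-1}t^{-1}$ and $\alpha_2:=\omega^{-1}t+\omega t^{-1}$, where $\omega$ is a primitive cube root of unity; in particular $\alpha+\alpha_1+\alpha_2=0$.

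If $d=3$, then $f$ is the minimal polynomial of $\alpha$ over $\bQ(\beta)$; since its $X^2$-coefficient vanishes, $\Tr_{\bQ(\alpha)/\bQ(\beta)}(\alpha)=0$, and applying $\Tr_{\bQ(\beta)/\bQ}$ together with transitivity of the trace gives $\Tr_{\bQ(\alpha)/\bQ}(\alpha)=0$, contradicting the nonvanishing hypothesis. (This case uses only the trace hypothesis, not the Galois one.)

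The remaining case $d=2$ is the heart of the matter. Here $f$ factors over $\bQ(\beta)$ as a linear factor times an irreducible quadratic; since $\alpha\notin\bQ(\beta)$, after relabeling $\alpha_1,\alpha_2$ we may take the linear factor to be $X-\alpha_1$ with $\alpha_1\in\bQ(\beta)$, and then $\bQ(\alpha_1)=\bQ(\beta)$ because $\beta=\alpha_1^3-3\alpha_1\in\bQ(\alpha_1)$, while $\alpha$ and $\alpha_2$ are the roots of the quadratic factor. Consequently $\Tr_{\bQ(\alpha)/\bQ(\beta)}(\alpha)=\alpha+\alpha_2=-\alpha_1$, so by transitivity $\Tr_{\bQ(\alpha)/\bQ}(\alpha)=-\Tr_{\bQ(\beta)/\bQ}(\alpha_1)$; it therefore suffices to show $\Tr_{\bQ(\beta)/\bQ}(\alpha_1)=0$, which would again contradict the hypothesis. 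This is precisely where the assumption that $\bQ(\alpha)/\bQ$ is Galois enters: $\bQ(\alpha)$ then contains $\alpha_1$ and all of its $\bQ$-conjugates, and each $\sigma\in\Gal(\bQ(\alpha)/\bQ)$ sends $f$ to $X^3-3X-\sigma(\beta)$ and thus permutes the triples of the form $\{z+z^{-1},\,\omega z+\omega^{-1}z^{-1},\,\omega^{-1}z+\omega z^{-1}\}$; unwinding this action on the conjugates of $\alpha_1$ — and checking that it does force the trace to vanish — is the step I expect to be the main obstacle. The cases $d=1,3$ and all of the reductions above, by contrast, are routine.
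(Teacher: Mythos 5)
Your reduction to $d:=[\bQ(\alpha):\bQ(\beta)]\in\{1,2,3\}$ and your treatment of $d=3$ are correct (and in fact cleaner than the paper's argument for that case, which rules out irreducibility of $f$ by a linear-independence computation with powers of $\beta$). But the step you flag in the case $d=2$ is not merely the main obstacle: the claim you would need, $\Tr_{\bQ(\beta)/\bQ}(\alpha_1)=0$, is false, and the lemma as stated fails exactly in this case. Take $\alpha_1=\tfrac12$, so $\beta=\alpha_1^3-3\alpha_1=-\tfrac{11}{8}$ and $f(X)=X^3-3X+\tfrac{11}{8}=(X-\tfrac12)(X^2+\tfrac12X-\tfrac{11}{4})$. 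Let $\alpha:=\tfrac{-1+3\sqrt5}{4}$, a root of the quadratic factor; since $|\alpha|<2$ there is $t\in\Qbar$ with $|t|=1$ and $t+t^{-1}=\alpha$. Then $\bQ(t+t^{-1})=\bQ(\sqrt5)$ is Galois over $\bQ$, $\Tr_{\bQ(\sqrt5)/\bQ}(\alpha)=-\tfrac12\neq0$, yet $t^3+t^{-3}=\alpha^3-3\alpha=-\tfrac{11}{8}$, so $\bQ(t^3+t^{-3})=\bQ\subsetneq\bQ(t+t^{-1})$. Here $\Tr_{\bQ(\beta)/\bQ}(\alpha_1)=\tfrac12\neq0$, so no amount of unwinding the Galois action can force the vanishing you hoped for: the Galois hypothesis only makes $\bQ(\alpha)/\bQ(\beta)$ Galois, and a quadratic extension is always Galois, so nothing prevents the "linear times irreducible quadratic" factorization.

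For comparison, the paper's proof passes over precisely this case by asserting that, since $\bQ(\alpha)/\bQ(\beta)$ is Galois, $f$ is either irreducible or totally split over $\bQ(\beta)$; that dichotomy is unjustified for the same reason, and the example above contradicts it. So your instinct that $d=2$ is the heart of the matter is exactly right, and your honesty in flagging it is warranted: the gap cannot be closed without strengthening the hypotheses (for instance, excluding the possibility that $\omega t+\omega^{-1}t^{-1}$ or $\omega^{-1}t+\omega t^{-1}$ lies in $\bQ(t^3+t^{-3})$, or imposing degree conditions on $\bQ(\alpha)$ that rule out a quadratic drop), and the lemma's use later in the paper (in the proof of Theorem \ref{thm_arbitrary_degree}) would need a corresponding repair in the degenerate case.
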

\begin{proof} The result is clear if $t = \pm1$, so we may assume $t\ne\pm1$. Let $s_i := t^i + t^{-i}$. Let $d := [\bQ(s_1):\bQ]$. We have the relation $s_3 = s_1^3 - 3s_1$, or equivalently, $s_1$ is a root of the polynomial $f(X) := X^3 - 3X - s_3\in\bQ(s_3)[X]$. Since $\bQ(s_1)/\bQ(s_3)$ is Galois, either $f(X)$ is irreducible over $\bQ(s_3)$, or $f(X)$ is totally split. Moreover, it is irreducible if and only if $\deg_\bQ(s_3) = d/3$. We will show that the trace condition on $s_1$ implies that this is impossible, and hence $f(X)$ must be totally split, which is to say that $\bQ(s_1) = \bQ(s_3)$.

If $d$ is not divisible by 3, then $f(X)$ cannot be irreducible, so we may assume $d = 3m$ for some integer $m$. In this case $f(X)$ is irreducible if and only if $\deg_\bQ(s_3) = m$, in which case the minimal polynomial of $s_3$ gives a $\bQ$-linear dependence relation between $1,s_3,s_3^2,\ldots,s_3^m$. Using the relation $s_3 = s_1^3 - 3s_1$, we find that for any $k\ge 1$,
\begin{equation}\label{eq_s3k}
s_3^k = (s_1^3-3s_1)^k = s_1^{3k} + \binom{k}{1}(-3)s_1^{3k-2} + \binom{k}{2}(-3)^2s_1^{3k-4} + \cdots + \binom{k}{k}(-3)^ks_1^k	
\end{equation}

We will show that $1,s_3,\ldots,s_3^m$ are $\bQ$-linearly independent. Since $1,s_1,\ldots,s_1^{d-1}$ is a $\bQ$-basis of $\bQ(s_1)$, clearly $1,s_3,\ldots,s_3^{m-1}$ are $\bQ$-linearly independent. Using the minimal polynomial of $s_1$ to write $s_1^{3m} = s_1^d$ as a linear combination of $1,s_1,\ldots,s_1^{d-1}$, the coefficient of $s_1^{d-1}$ is equal to $\tr_{\bQ(s_1)/\bQ}(s_1)$, and hence nonzero. Using \eqref{eq_s3k}, this implies that the coefficient of $s_1^{d-1}$ in $s_3^m$ is nonzero, and hence $1,s_3,s_3^2,\ldots,s_3^m$ is $\bQ$-linearly independent. Thus $f(X)$ is totally split, and hence $\bQ(s_1) = \bQ(s_3)$.
\end{proof}

\begin{proof}[Proof of Theorem \ref{thm_arbitrary_degree}] For integers $n,k\ge 1$, let $\zeta$ be a primitive $n$th root of unity, and let
$$t_{\zeta,k} := \frac{\zeta+k}{\zeta^{-1}+k}$$
Clearly $t_{\zeta,k}\in\bQ(\zeta_n)$, $|t_{\zeta,k}| = 1$, $\arg(t_{\zeta,k}) = 2\arg(\zeta+k)$, and $t_{\zeta,k}\ra 1$ as $k\ra\infty$. Moreover, for all $k$ sufficiently large, we have
\begin{enumerate}
\item $t_{\zeta,k}$ generates the cyclotomic field $\bQ(\zeta_n)$

Indeed, the map $\zeta\mapsto t_{\zeta,k}$ is $\Gal(\Qbar/\bQ)$-equivariant, and for large $k$ the map $\zeta\mapsto t_{\zeta,k}$ is injective on primitive $n$th roots of unity. This shows that the Galois orbit of $t_{\zeta,k}$ has size $\phi(n) = [\bQ(\zeta_n):\bQ]$. 

\item $s_{\zeta,k} := t_{\zeta,k} + t_{\zeta,k}^{-1}$ generates the maximal totally real subfield of $\bQ(\zeta_n)$, and has nonzero trace over $\bQ$.

Indeed, $\tr_\bQ(s_{\zeta,k}) = 2\tr_\bQ t_{\zeta,k}$, which is nonzero since for $k > 2$, each Galois conjugate of $t_{\zeta,k}$ has positive real part. If $n \le 2$, the claim is trivial, so assume $n\ge 3$. Then as $\zeta$ runs over all $\frac{\phi(n)}{2}$ primitive $n$th roots of unity with positive imaginary part, for sufficiently\footnote{Let $S$ denote the set of primitive $n$th roots of 1 with positive imaginary part. Here we should choose $k$ so that every line passing through $\zeta+k,\zeta'+k$ for any pair $\zeta,\zeta'\in S$ does not pass through $0\in\bC$. This can always be done for $k$ large enough relative to $n$.} large $k$, the real parts of $t_{\zeta,k}$ are all distinct, and hence the elements $s_{\zeta,k}$ are all distinct. Thus $s_{\zeta,k}$ is a real number of degree $\ge \frac{\phi(n)}{2}$, and hence must generate the maximal totally real subfield of $\bQ(\zeta_n)$. 

\end{enumerate}
Thus for fixed $n$, let $\zeta_n := e^{2\pi i/n}$. Then choosing $k$ large enough, we may apply Stoimenow's theorem \ref{thm_sto} to deduce that the representation
\begin{eqnarray*}
\rho_{t_{\zeta_n,k}} : B_4 & \lra & U_{h,t_{\zeta,k}}(\bQ(s_{\zeta_n,k}))
\end{eqnarray*}
is Zariski dense. Since $\bQ(s_{\zeta_n,k})$ is Galois over $\bQ$ with nonzero trace, Lemmas \ref{lemma_trace_field}, \ref{lemma_nonzero_trace} imply that the trace field of $\rho_{t_{\zeta_n,k}}$ is equal to $\bQ(s_{\zeta_n,k})$. Propositions \ref{prop_main} and \ref{prop_bootstrap} then imply that for all but finitely many primes $\mf{p}$ of $\bQ(s_{\zeta_n,k})$ with residue field $\kappa$, the finite group $\SU_{h,t_{\zeta_n,k}}(\kappa)$ is a characteristic quotient of $F$.

To complete the proof, we want to show that we can choose $n$ (and $k\gg 0$) such that amongst the primes $\mf{p}$ of $\bQ(s_{\zeta_n,k})$ with residue field of degree $d$ over the prime field, infinitely many split (resp. remain inert). To do this, let $p > 3$ be a prime satisfying $p\equiv 1\mod 2d$, let $n := 3p$, and $k$ sufficiently large as above. The extension $\bQ(t_{\zeta_n,k})/\bQ(s_{\zeta_n,k})$ induces the following surjection of Galois groups
$$\Gal(\bQ(t_{\zeta_n,k})/\bQ)\cong(\bZ/n)^\times\lra (\bZ/n)^\times/\{\pm1\}\cong \Gal(\bQ(s_{\zeta_n,k})/\bQ)$$
Note $(\bZ/n)^\times\cong(\bZ/p)^\times\times(\bZ/3)^\times$. Let $x\in(\bZ/p)^\times$ be an element of order $2d$, and let $\alpha\in\Gal(\bQ(t_{\zeta_n,k})/\bQ)\cong(\bZ/n)^\times$ be the element corresponding to $(x,-1)\in (\bZ/p)^\times\times(\bZ/3)^\times$. Then $\alpha$ has order $2d$ but maps to an element of order $d$ in $\Gal(\bQ(s_{\zeta_n,k})/\bQ)\cong(\bZ/n)^\times/\{\pm1\}$. By Chebotarev density, infinitely many primes $\mf{p}$ of $\bQ(s_{\zeta_n,k})$ have Frobenius element $\alpha$; such primes have residue degree $d$ and remain inert in $\bQ(t_{\zeta_n,k})$. Using similar arguments as in Proposition \ref{prop_finite_groups}, for all but finitely many such primes, we have $\SU_{h,t_{\zeta_n,k}}(\kappa)\cong \SU_3(\bF_{p^d})$, where $\kappa\cong\bF_{p^d}$ denotes the residue field of $\mf{p}$.


To obtain quotients of type $\SL_3$, the element $\alpha^2$ of $\Gal(\bQ(t_{\zeta_n,k})/\bQ)\cong(\bZ/n)^\times$ (corresponding to $(x^2,1)$) has order $d$ in $\Gal(\bQ(t_{\zeta_n,k})$ and its image in $\Gal(\bQ(s_{\zeta_n,k})$ also has order $d$. Again by Chebotarev density, we obtain infinitely many primes with Frobenius element $\alpha^2$. For such primes, we have $\SU_{h,t_{\zeta_n,k}}(\kappa)\cong \SL_3(\bF_{p^d})$.
\end{proof}

\section{Proof of Theorem \ref{thm_main}}\label{section_tiep}
Recall that $R_0 := \bZ[q+q^{-1}]$ is the subring of $R := \bZ[q^{\pm1}]$ fixed by the involution sending $q\mapsto q^{-1}$. The Burau representation
$$\rho : B_4\lra U_h(R_0)\subset\GL_3(R)$$
sends $x,y\in F := \langle x,y\rangle\subset B_4$ to the matrices
$$X := \rho(x) = \thmatrix{-q}{1}{0}{0}{1}{0}{0}{1}{-q^{-1}},\qquad Y := \rho(y) = \thmatrix{1-q}{-q^{-1}}{q^{-1}}{1-q^2}{-q^{-1}}{0}{1}{-q^{-1}}{0}$$
These matrices have eigenvalues $1,-q,-q^{-1}$, and $Y = S_2XS_2^{-1}$, where
$$S_2 := \rho(\sigma_2) = \thmatrix{1}{0}{0}{q}{-q}{1}{0}{0}{1}$$
To prove Theorem \ref{thm_main}, we will show how to obtain, for all sufficiently large finite fields $k_0$, suitable specializations of the Burau representation which give $\SL_3(k_0)$ and $\SU_3(k_0)$ as characteristic quotients of $F$. In our formalism, such specializations should be understood as a base change. We recall the setup:

Let $r := q+q^{-1}\in R_0$. Thus $R$ is free of rank 2 over $R_0 = \bZ[r]$, generated by a root of the polynomial $f(T) := T^2 - rT + 1\in R_0[T]$. Let $k_0$ be a finite field of characteristic $p$. For $s\in k_0$, let
$$\SU_{h_s} := \SU_h\times_{R_0} k_0$$
be the base change of $\SU_h$ to the $R_0$-algebra $k_0$ with $R_0$-algebra structure given by the map sending $r\mapsto s$. Let $k$ be the quadratic $k_0$-algebra $k := R\otimes_{R_0}k_0 = k_0[T]/f(T)_{r:=s}$, with involution inherited from $R/R_0$. Let $t\in k$ be a fixed root of $f(T)_{r := s}$; this is guaranteed to be a unit, and the other root is $t^{-1}$. Thus $t,s$ satisfy the relation
$$s = t+t^{-1}$$
To prove Theorem \ref{thm_main}, we first need to understand for which values of $s$ do the finite groups $\SU_{h_s}(k_0)$ give groups of type $\SL_3(\bF_q)$ or $\SU_3(\bF_q)$.

\begin{lemma} Let $k_0$ be a finite field. Then
$$\SU_{h_s}(k_0)\cong \left\{\begin{array}{rl}
	\SL_3(k_0) & \text{if $T^2-sT+1$ splits into coprime factors over $k_0$} \\
	\SU_3(k_0) & \text{if $T^2-sT+1$ is irreducible over $k_0$ and $t^2+1\ne 0$}
\end{array}\right.$$
If $k_0$ has odd characteristic, $T^2-sT+1$ splits into coprime factors if and only if $s^2-4$ is a square in $k_0^\times$, and is irreducible if $s^2-4$ is a nonsquare. If $k_0$ has characteristic 2, $T^2-sT+1$ splits into coprime factors over $k_0$ if and only if $s \ne 0$ and $\Tr_{k_0/\bF_2}(s^{-1}) = 0$, and is irreducible if and only if $s\ne 0$ and $\Tr_{k_0/\bF_2}(s^{-1}) = 1$.
\end{lemma}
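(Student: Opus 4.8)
The plan is to read $\SU_{h_s}(k_0)$ off directly from the structure of the quadratic $k_0$-algebra $k := R\otimes_{R_0}k_0 = k_0[T]/(T^2-sT+1)$, equipped with the involution $t\mapsto t^{-1} = s-t$, while keeping track of the reduction $H_s\in M_3(k)$ of the Gram matrix $H$. Two preliminary observations carry most of the weight. First, simplifying \eqref{eq_det_of_H} (or expanding the near-circulant matrix $H$ directly, using $(-q-1)(-q^{-1}-1) = q+2+q^{-1}$) gives the clean identity $\det H = (q+q^{-1})(q+2+q^{-1})^2 = r(r+2)^2$, hence $\det H_s = s(s+2)^2\in k_0$. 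Second, $s^2-4 = (t-t^{-1})^2$. A monic quadratic over a field $k_0$ behaves in exactly one of three ways — a product of coprime linear factors, a perfect square, or irreducible — and the first and third will correspond to $\SU_{h_s}(k_0)$ being of type $\SL_3$ and of type $\SU_3$ respectively.

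Suppose first that $f_s(T):=T^2-sT+1$ splits into coprime factors over $k_0$, so its roots $t,t^{-1}$ are distinct elements of $k_0$; then $s\ne -2$ (otherwise $f_s = (T+1)^2$), so $\det H_s = s(s+2)^2$ is a unit as soon as $s\ne 0$. By the Chinese Remainder Theorem $k\cong k_0\times k_0$, and the involution becomes the coordinate swap. Writing $H_s = (A,A^t)$ and $g = (g_1,g_2)$ with $g_i\in\GL_3(k_0)$, the isometry relation $g^tH_s\ol{g} = H_s$ collapses — exactly as in the proof of Proposition \ref{prop_split_prime} — to the single equation $g_1^tAg_2 = A$; hence $g_2 = A^{-1}g_1^{-t}A$ is forced by an arbitrary $g_1\in\GL_3(k_0)$, giving $\U_{h_s}(k_0)\cong\GL_3(k_0)$. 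Since then $\det g_2 = (\det g_1)^{-1}$, imposing $\det g = 1$ amounts to $g_1\in\SL_3(k_0)$, so $\SU_{h_s}(k_0)\cong\SL_3(k_0)$.

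Suppose instead that $f_s$ is irreducible over $k_0$ and $t^2+1\ne 0$. Then $k$ is the quadratic field extension $\bF_{|k_0|^2}$, and $t\mapsto s-t$, being a nontrivial $k_0$-automorphism of $k$, is the Frobenius $x\mapsto x^{|k_0|}$; thus $h_s(v,w) = v^tH_s\ol{w}$ is a Hermitian form on $k^3$ relative to $k/k_0$. It is nondegenerate because $\det H_s = s(s+2)^2\ne 0$: here $s=-2$ is excluded by irreducibility, and $s=0$ would force $t^2=-1$, i.e.\ $t^2+1=0$, which is excluded by hypothesis. By Definition \ref{defn_Uh}, $\U_{h_s}(k_0)$ is then precisely the isometry group of the nondegenerate Hermitian space $(k^3,h_s)$ and $\SU_{h_s}(k_0)$ its determinant-$1$ subgroup; since nondegenerate Hermitian forms on $k^3$ relative to $k/k_0$ are unique up to isometry \cite[Corollary 10.4]{Grove02}, we conclude $\U_{h_s}(k_0)\cong\U_3(k_0)$ and $\SU_{h_s}(k_0)\cong\SU_3(k_0)$.

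It remains to translate the conditions on $f_s$ into conditions on $s$. In odd characteristic the discriminant is $s^2-4 = (t-t^{-1})^2$, and $f_s$ has two distinct roots in $k_0$ (coprime factors) iff $s^2-4$ is a nonzero square, a repeated root iff $s^2-4 = 0$, and is irreducible iff $s^2-4$ is a nonsquare. In characteristic $2$, $f_s = T^2+sT+1$; if $s=0$ then $f_s = (T+1)^2$, while for $s\ne 0$ the substitution $T=sU$ rewrites $f_s$ as $s^2(U^2+U+s^{-2})$, and since $U^2+U+c$ has a root in $k_0$ iff $\Tr_{k_0/\bF_2}(c)=0$, $f_s$ is irreducible iff $\Tr_{k_0/\bF_2}(s^{-2})=1$, equivalently $\Tr_{k_0/\bF_2}(s^{-1})=1$ because the Frobenius fixes the trace; otherwise it splits, and into coprime factors, as $s\ne 0$ makes $f_s$ separable. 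The one point genuinely requiring care is the nondegeneracy of $h_s$: it is exactly the hypotheses "coprime factors"/"irreducible" (forcing $s\ne-2$) together with the explicit clause "$t^2+1\ne 0$" that keep $\det H_s=s(s+2)^2$ nonzero, and the lone borderline situation — odd characteristic with $s=0$ and $-1$ a square in $k_0$, where $f_s=T^2+1$ splits but $h_s$ degenerates — lies outside this trichotomy and would have to be treated separately. Everything else is routine manipulation with the Chinese Remainder Theorem and the classification of Hermitian forms over finite fields.
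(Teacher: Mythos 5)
Your proof takes essentially the same route as the paper's: the split case is the Chinese--Remainder computation of Proposition \ref{prop_split_prime}, the irreducible case is nondegeneracy of $h_s$ (your identity $\det H = r(r+2)^2$ is a correct rewriting of \eqref{eq_det_of_H}, equivalent to the paper's condition $(t+1)(t^2+1)\ne 0$) combined with the uniqueness of nondegenerate Hermitian forms over finite fields, and the discriminant and Artin--Schreier criteria are the same computations the paper does.

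The one genuine gap is in your split case. By forcing $g_2=A^{-1}g_1^{-t}A$ you need $H_s$ invertible, hence $s\ne 0$; and your closing assertion that the situation ``odd characteristic, $s=0$, $-1$ a square in $k_0$'' lies outside the trichotomy is wrong: there $T^2+1$ does split into coprime factors, so that case sits squarely inside the first branch of the lemma, and your argument does not prove it. The paper covers it by citing Proposition \ref{prop_split_prime}, whose hypothesis is only that $t\ne t^{-1}$ (no nondegeneracy), together with the remark following it; be aware, though, that the written proof of that proposition also inverts $H$, and in the degenerate split case the identification really does break down: if $H_sw=0$ and $c^tw=0$, then $(I_3,\,I_3+wc^t)$ lies in $\SU_{h_s}(k_0)$, so the projection $(g_1,g_2)\mapsto g_1$ acquires a nontrivial unipotent kernel of order $|k_0|^2$ and $\SU_{h_s}(k_0)\not\cong\SL_3(k_0)$ there. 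So your instinct to flag the nondegeneracy issue is sound, but the correct resolution is to exclude $s=0$ explicitly from the first branch (harmless for the applications, since Theorem \ref{thm_tiep} imposes $t^4\ne 1$), not to declare that case outside the lemma's hypotheses while leaving the stated first branch unproved for it.
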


\begin{remark} 
If $T^2-sT+1$ is a square, $k\cong k_0[\epsilon]/\epsilon^2$, in which case by arguments similar to that used in Proposition \ref{prop_finite_groups}, we find that $\SU_{h_s}$ is nonreductive.	
\end{remark}

\begin{proof} If $f(T)_{r := s} = T^2-sT+1$ splits into distinct factors, then $k\cong k_0\times k_0$ and by Proposition \ref{prop_split_prime}, we have $\SU_{h_s}\cong\SL_{3,k_0}$. If $T^2-sT+1$ is irreducible, then $k = k_0(t)$ is a field, and $\SU_{h_s}$ is the special unitary group of the hermitian space $(k^3,h_s)$. From \eqref{eq_det_of_H}, we find that $h_s$ is nondegenerate if and only if $(t+1)(t^2+1)\ne 0$. Since $t$ generates the extension $k/k_0$, $t\ne -1$, so as long as $t^2+1\ne 0$, $h_s$ is nondegenerate, in which case $\SU_{h_s}(k_0)\cong \SU_3(k_0)$.

In odd characteristic, the discriminant of $T^2-sT+1$ is $s^2-4$, and hence it splits into coprime factors if and only if $s^2-4$ is a square in $k_0^\times$, and is irreducible if $s^2-4$ is a nonsquare. In characteristic 2, $T^2-sT+1$ is a square if and only if $s = 0$. If $s\ne 0$, then by the change of variable $X = T/s$, $T^2-sT+1$ is split over $k_0$ if and only if $g(X) := X^2 + X + s^{-2}$ is split, which happens if and only if $s^{-2}$ can be written as $a^2+a$ for some $a \in k_0$. The map $k_0\ra k_0$ sending $x\mapsto x^2+x$ is $\bF_2$-linear, and its image is precisely the kernel of $\Tr_{k_0/\bF_2} : k_0\ra \bF_2$. Thus we find that $T^2-sT+1$ splits into coprime factors over $k_0$ if and only $s \ne 0$ and $\Tr_{k_0/\bF_2}(s^{-1}) = 0$, and irreducible if $s\ne 0$ and $\Tr_{k_0/\bF_2}(s^{-1}) = 1$.
\end{proof}

Our main theorem \ref{thm_main} is implied by the following two results.

\begin{thm}\label{thm_tiep} Let $k_0 := \bF_{p^d}$ be a finite field such that $p^d\ge 9$. Then there exist elements $s,s'\in k_0$ satisfying $t^4 \ne 1$ where $s = t+t^{-1}$, and $(t')^4\ne 1$ where $s' = t'+t'^{-1}$, such that
\begin{enumerate}
	\item $T^2-sT+1$ splits into coprime factors over $k_0$ and $\SU_{h_s}(k_0)\cong \SL_3(k_0)$ is generated by the images of $X,Y$, and
	\item $T^2-s'T+1$ is irreducible over $k_0$ and $\SU_{h_{s'}}(k_0)\cong\SU_3(k_0)$ is generated by the images of $X,Y$.
\end{enumerate}
Moreover, (a) is also attainable for $k_0 = \bF_{8}$, and (b) is attainable for $k_0 = \bF_4$.
\end{thm}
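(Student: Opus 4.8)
The plan is to reduce Theorem~\ref{thm_tiep} to a pure generation statement and then, using the classification‑free Mitchell--Hartley description of the subgroup lattices of $\SL_3$ and $\SU_3$, to rule out one class at a time every maximal subgroup of $\SU_{h_s}(k_0)$ that could contain the image of $F$. Fix $k_0=\bF_{p^d}$. For $s\in k_0$ with $s\neq 0,\pm2$ (so $T^2-sT+1$ is separable) put $k:=k_0[T]/(T^2-sT+1)$, let $t\in k$ be a root, and let $\bar X_s,\bar Y_s\in\SU_{h_s}(k_0)\subseteq\GL_3(k)$ be the images of $X,Y$ under $q+q^{-1}=s$; thus $\bar X_s$ has eigenvalues $1,-t,-t^{-1}$, $\bar Y_s=S_2\bar X_sS_2^{-1}$, and we set $H_s:=\langle\bar X_s,\bar Y_s\rangle$. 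By the lemma above, $\SU_{h_s}(k_0)$ is $\SL_3(k_0)$ when $T^2-sT+1$ splits and $\SU_3(k_0)$ when it is irreducible (note $t^2+1\neq0$ as $s\neq0$), and ``$t^4\neq1$'' is precisely ``$s\notin\{0,\pm2\}$''. In each field occurring in the statement these groups are the universal central extensions of the simple groups $\PSL_3(k_0)$ and $\PSU_3(k_0)$, so any subgroup surjecting onto the simple quotient is necessarily the whole group (the universal extension admits no complement to its centre). Hence it suffices to produce, for each admissible $k_0$, an $s$ (resp.\ $s'$) with the prescribed splitting behaviour, with $t^4\neq1$, and with $H_s$ not contained in any maximal subgroup.

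Throughout one uses that $\bar X_s$ and $\bar Y_s$ are conjugate with characteristic polynomial $(T-1)(T+t)(T+t^{-1})$, that $\tr\bar X_s=1-s$, and that $\bar X_s$ has order $\ord(-t)$; one also records $\tr(\bar X_s\bar Y_s)$, $\tr(\bar Y_s^{-1}\bar X_s^{-1})$ and $\tr[\bar X_s,\bar Y_s]$ as explicit Laurent polynomials in $t$. I would then impose on $s$ the following finitely many conditions. \emph{(Parabolic/reducible subgroups.)} Demand that $\bar X_s,\bar Y_s$ share no eigenvector and no invariant plane; comparing the eigenvectors $v_1,e_1,e_3$ of $\bar X_s$ with $S_2v_1,S_2e_1,S_2e_3$ of $\bar Y_s$ (and dually) shows the bad $t$ form a finite set, and combined with the next condition this forces $H_s$ to be absolutely irreducible. \emph{(The monomial subgroup and the field‑extension subgroups $C_{|k_0|^2\pm|k_0|+1}{:}3$.)} Demand $s\neq1$ and $[\bar X_s,\bar Y_s]\neq1$. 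Each such overgroup has a normal abelian subgroup $C$ with quotient inside $S_3$ acting by permuting three eigenlines; an element inducing a $3$‑cycle there has trace $0$, and one inducing a transposition forces two of the eigenvalues $1,-t,-t^{-1}$ to be negatives of each other, i.e.\ $t^4=1$. Since $\tr\bar X_s=1-s\neq0$ and $t^4\neq1$, both $\bar X_s$ and its conjugate $\bar Y_s$ lie in $C$, contradicting $[\bar X_s,\bar Y_s]\neq1$.

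\emph{(Subfield subgroups.)} Demand that $s$ generate $k_0$ over $\bF_p$. If $H_s$ were conjugate into $\SL_3(\bF_{p^e})$ with $e<d$ then $\tr\bar X_s=1-s\in\bF_{p^e}$, impossible; if it were conjugate into a unitary subfield subgroup $\SU_3(\bF_{p^e})$ then $\tr(g^{-1})=\tr(g)^{p^e}$ for all its elements, and since $\bar X_s^{-1}$ has the same eigenvalues as $\bar X_s$ this again forces $1-s\in\bF_{p^e}$. \emph{(The orthogonal subgroup $\SO_3(k_0)\cong\PGL_2(k_0)$, present for $p$ odd.)} Any subgroup of it satisfies $\tr(g)=\tr(g^{-1})$ for all its elements, so demand $\tr(\bar X_s\bar Y_s)\neq\tr(\bar Y_s^{-1}\bar X_s^{-1})$, which fails for only finitely many $s$ because these are distinct Laurent polynomials in $t$ (as a direct check shows). \emph{(The extraspecial ``Hessian'' normalizers, when the relevant one of $|k_0|\pm1$ is divisible by $3$, and the near‑simple subgroups $\cong A_6,A_7,\PSL_2(7),A_5$.)} These have order, hence element orders, bounded by an absolute constant; demand that $\ord(-t)$ exceed this constant. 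Together with the Mitchell--Hartley classification this pins $H_s$ down to the full group $\SU_{h_s}(k_0)$.

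It remains to check that all these conditions can be met simultaneously. Taking $t$ to be a primitive root of $k_0$ in case~(a), resp.\ of $k$ with $t\notin k_0$ in case~(b), secures ``$s$ generates $k_0$'' and the large‑order requirement once $|k_0|$ exceeds an explicit bound; the split/irreducible requirement is the condition that $s^2-4$ be a nonzero square/a nonsquare in $k_0$ (or the corresponding trace condition of the lemma in characteristic~$2$), cutting $k_0$ into two halves of size $\approx|k_0|/2$, and the remaining constraints (a bounded number of polynomial conditions, and in odd characteristic the nonsquareness of $2-s$) delete a proper subset of each half; an elementary inclusion--exclusion estimate — or a Weil bound for the relevant character sum — then produces a valid $s$, and the bookkeeping yields the threshold $p^d\ge 9$. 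The finitely many prime powers below the threshold for which a Hessian or near‑simple subgroup can occur, together with $\bF_8$ for~(a) and $\bF_4$ for~(b), form a finite list; for each of these I would exhibit an explicit admissible $s$ and verify the generation by direct computation (e.g.\ with GAP~\cite{GAP4}). The step I expect to be the genuine obstacle is exactly this last one: making the simultaneous choice of $s$ go through \emph{within the sharp range $p^d\ge 9$}, which forces the counting to be essentially optimal and turns the remaining small fields into a real (though finite) computation — and, upstream of it, handling the extraspecial and exceptional near‑simple subgroups uniformly, squeezing everything one needs out of the Mitchell--Hartley tables without appealing to the classification of finite simple groups.
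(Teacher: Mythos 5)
Your overall skeleton — specialize $X,Y$, then rule out every maximal overgroup using the Mitchell--Hartley/Bray--Holt--Roney-Dougal description of the subgroup structure of $\SL_3$ and $\SU_3$ — is the same as the paper's, and several of your local exclusions (trace $0$ for elements inducing a $3$-cycle, ``two eigenvalues are negatives'' forcing $t^4=1$) are fine and even a bit slicker than the paper's for the classes $\cC_2,\cC_3$. But there are genuine gaps. First, your construction in case (b) is impossible as stated: if $T^2-s'T+1$ is irreducible over $k_0=\bF_Q$, its roots $t',t'^{-1}$ are swapped by the Frobenius $x\mapsto x^Q$, so $(t')^{Q+1}=1$ and $\ord(t')$ divides $Q+1$; hence $t'$ can never be a primitive root of $k=\bF_{Q^2}$. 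The paper instead arranges for $-t'$ to have order exactly $Q+1$ (the maximum available), and all of your order-based bookkeeping for the unitary case has to be recalibrated to orders dividing $Q+1$.

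Second, your plan for the extraspecial normalizers and the class-$\cS$ subgroups (``their element orders are bounded by an absolute constant, so demand $\ord(-t)$ exceed it'') cannot reach the sharp threshold $p^d\ge 9$: those subgroups contain elements of order up to $12$ (times a central $3$), respectively up to roughly $21$, so your uniform argument only starts once $Q\mp1$ exceeds about $36$, and every field with $9\le Q\lesssim 37$ would be relegated to computations you have not performed — far more than the finitely many exceptional fields the statement permits. The paper gets the sharp range by exact arithmetic: $Q\pm1\nmid 3(Q^2\mp Q+1)$ for $\cC_3$, and for $\cC_6$ the constraint that $|X|$ divides $9$ or $12$ forces $Q\in\{11,13\}$, which are then killed by the observation that no element of order $12$ in $3^{1+2}\rtimes 2\mathsf{A}_4$ has eigenvalue $1$ in dimension $3$. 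Third, your trace-based exclusions of the subfield and $\SO_3$-type subgroups ignore the central factor these maximal subgroups carry: they have the shape $C_a\times\SO_3(Q)$, $C_a\times\SU_3(Q_0)$, $\SL_3(Q_0).a$ with $a\mid\gcd(3,Q\mp1)$, so an element of them may be a scalar $z$ of order $3$ times an element of the smaller group; then $\tr X=z\,\tr X_1$ need not lie in the subfield, and $\tr(g)=\tr(g^{-1})$ can fail when $z\ne 1$, so neither of your criteria rules out the full maximal subgroup. The paper handles exactly this by first proving the scalar part is trivial (the spectrum argument in step (a6)) or by passing to the powers $X^e$, $X^f$ with $e,f=\gcd(3,\cdot)$ and then running a maximal-torus/eigenvalue analysis. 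Smaller omissions: you never verify that your choice of $t$ makes $s$ generate $k_0$ (true, but it needs the observation that $s$ lying in a proper subfield would force $t^{p^{d/2}}\in\{t,t^{-1}\}$), nor that $[X,Y]\ne 1$.
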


The remaining cases of Theorem \ref{thm_main} were checked using GAP \cite{GAP4}.
\begin{prop} For a prime power $q$, let $Z_q$ denote a generator of the multiplicative group $\bF_q^\times$.
\begin{enumerate}
\item Let $t := Z_{64}^7$, and $s := t+t^{-1}\in\bF_8$. Then $\SU_{h_s}(\bF_8)\cong\SU_3(\bF_8)$ is generated by the images of $X,Y$.
\item Let $t := Z_{49}^6$, and $s := t+t^{-1}\in\bF_7$. Then $\SU_{h_s}(\bF_7)\cong\SU_3(\bF_7)$ is generated by the images of $X,Y$.
\item Let $t := Z_7^2$, and $s := t+t^{-1}\in\bF_7$. Then $\SU_{h_s}(\bF_7)\cong\SL_3(\bF_7)$ is generated by the images of $X,Y$.
\item Let $t := Z_{25}^8$, and $s := t+t^{-1}\in\bF_5$. Then $\SU_{h_s}(\bF_5)\cong\SU_3(\bF_5)$ is generated by the images of $X,Y$.
\item The groups $\PSL_3(\bF_2),\PSL_3(\bF_3),\PSL_3(\bF_4),\PSL_3(\bF_5),\PSU_3(\bF_2),\PSU_3(\bF_3)$ are not characteristic quotients of $F_2$.
\end{enumerate}
\end{prop}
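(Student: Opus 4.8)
Everything here can be verified by a finite computation; there is no conceptual difficulty, so I would simply carry that computation out, organizing it as follows.

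For (a)--(d) I would fix the stated element $t$ and put $s:=t+t^{-1}$, which lies in the prime-power field $k_0\in\{\bF_5,\bF_7,\bF_8\}$ named in each case; then $k:=R\otimes_{R_0}k_0\cong k_0[T]/(T^2-sT+1)$ with $t$ the image of $T$. First I would read off the isomorphism type of $\SU_{h_s}(k_0)$ from the factorization of $T^2-sT+1$ over $k_0$ (as in the Lemma above): in (a), (b), (d) the roots $t,t^{-1}$ have multiplicative order not dividing $|k_0^\times|$, so $T^2-sT+1$ is irreducible over $k_0$ and $t^2+1\ne 0$, giving $\SU_{h_s}(k_0)\cong\SU_3(k_0)$; in (c) one has $t\in k_0^\times$ with $t^2\ne 1$, so $T^2-sT+1$ splits into coprime factors and $\SU_{h_s}(k_0)\cong\SL_3(k_0)$. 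Next I would substitute $q=t$ into the matrices $X=\rho(x)$, $Y=\rho(y)$ recalled at the start of \S\ref{section_tiep}, obtaining explicit elements of $\GL_3(k)$; since $\rho$ carries $F$ into $\SU_h(R_0)$ and $t\in k^\times$, these automatically lie in $\SU_{h_s}(k_0)$. Finally, the assertion $\langle X,Y\rangle=\SU_{h_s}(k_0)$ I would verify in GAP by computing the order of the matrix group $\langle X,Y\rangle\le\GL_3(k)$ and comparing it with $|\SL_3(k_0)|=|k_0|^3(|k_0|^3-1)(|k_0|^2-1)$, respectively $|\SU_3(k_0)|=|k_0|^3(|k_0|^3+1)(|k_0|^2-1)$. (One could instead adapt the maximal-subgroup analysis of Theorem \ref{thm_tiep}, but over such small fields the direct computation is quicker.)

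For (e) I would use that $\Inn(F_2)$ fixes every normal subgroup of $F_2$, so that $G$ is a characteristic quotient of $F_2$ precisely when the induced action of $\Out(F_2)\cong\GL_2(\bZ)$ on the finite set $\cN_2(G)=\Epi(F_2,G)/\Aut(G)$ has a fixed point --- equivalently, when some $\Aut(G)$-orbit of generating pairs of $G$ is stable under all of $\Aut(F_2)$. Since $\Aut(F_2)$ is generated by the Nielsen transformations $(x,y)\mapsto(y,x)$, $(x,y)\mapsto(x^{-1},y)$, $(x,y)\mapsto(xy,y)$, whose images $\spmatrix{0}{1}{1}{0}$, $\spmatrix{-1}{0}{0}{1}$, $\spmatrix{1}{1}{0}{1}$ generate $\GL_2(\bZ)$, it suffices for each of the six groups $G$ to enumerate the finitely many $\Aut(G)$-orbits of generating pairs and confirm that none is fixed by all three transformations. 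Every $G$ in the list has order at most $|\SL_3(\bF_5)|=372000$, so this is a routine GAP computation, and in every case it returns no fixed orbit; hence none of these six groups is a characteristic quotient of $F_2$. (One may also observe that $\PSL_3(\bF_2)\cong\PSL_2(\bF_7)$, so this particular case is additionally covered by the results of \S\ref{section_markoff}.)

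The only real issue is practical rather than conceptual: $\cN_2(G)$ is fairly large for the biggest groups in (e), so the enumeration should be organized reasonably --- for instance one can prune using the classical Nielsen invariant that the conjugacy class of $[\varphi(x),\varphi(y)]$ up to inversion is $\Aut(F_2)$-invariant, so a fixed orbit would need commutator class stabilized by $\Out(G)$ --- but even a brute-force search finishes quickly for all the orders involved here.
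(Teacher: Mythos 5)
Your proposal is correct and takes essentially the same route as the paper: parts (a)--(d) are settled by specializing $q=t$ and verifying generation by $X,Y$ in GAP, and part (e) by exhaustively checking in GAP that every class in $\Epi(F_2,G)/\Aut(G)$ has a proper $\Aut(F_2)$-stabilizer. Your extra observations (reading off the isomorphism type of $\SU_{h_s}(k_0)$ from the factorization of $T^2-sT+1$, and noting $\PSL_3(\bF_2)\cong\PSL_2(\bF_7)$ so that case also follows from Corollary \ref{cor_SL2}) are sound refinements of the same computational argument.
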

\begin{proof} The specializations (a)-(d) were checked explicitly in GAP \cite{GAP4}. For each group $G$ in (e), we checked that for every surjection $\varphi : F_2\ra G$, the class of $\varphi\mod\Aut(G)$ has a proper stabilizer in $\Aut(F_2)$.
\end{proof}

\begin{proof}[Proof of Theorem \ref{thm_tiep}] For $a,d\in\bZ_{\ge 2}$, we say that a prime divisor $\ell\mid a^d-1$ is \emph{primitive} if $\ell$ does not divide $\prod_{i=1}^{d-1}(a^i-1)$. By a result of Zsigmondy \cite{Zsi92}, for any $a,d\in\bZ_{\ge 2}$, $a^d-1$ always admits a primitive prime divisor unless $(a,d) = (2,6)$ or $d = 2$ and $a+1$ is a power of 2. Any such prime divisor $\ell$ satisfies $\ell\ge d+1$. We will use this result repeatedly without mention.

We will also often use $X,Y$ to denote their images in $\SU_{h_s}(k_0)$ via the relevant specialization, and similarly we will use $v_1 = (1,q+1,q) ,v_2 = e_1, v_3 = e_3$ to denote the eigenbasis for $X$ defined in \S\ref{ss_the_representation}, with respective eigenvalues $1,-q,-q^{-1}$.

We begin with (a). Here we assume that
\begin{equation}\label{su10a}
 Q:= p^d \neq 2,3,5,8,
\end{equation}  

and set
$$\kappa:=\gcd(2,p-1).$$

\smallskip

(a1) We can find $t' \in \bF_{p^{2d}}^\times$ such that $-q=-t'$ has order $Q+1$. 
With this choice,
$$|X|=|Y|=Q+1 \geq 5.$$
The spectrum of $X$ in fact implies that
\begin{equation}\label{su10b}
 \mbox{ the smallest } j \geq 1 \mbox{ such that }X^j \mbox{ acts as a scalar is }Q+1.
\end{equation}

If $p=2$, then $t'=q=-q$ has order $Q+1 \geq 5$ by \eqref{su10a}. If $p > 2$ and $Q \equiv 1 \pmod{4}$, then 
$t'=q$ has order $(Q+1)/2 \geq 5$ by \eqref{su10a}. If $p > 2$ but $Q \equiv 3 \pmod{4}$, then 
$t=q'$ has order $Q+1 \geq 8$ by \eqref{su10a}. Thus in all cases we 
\begin{equation}\label{su11}
(t')^4 \neq 1,~(t')^{Q-1} \neq 1.
\end{equation}
We also have $q^{-1}=q^Q$, and so 
$$s'=t'+(t')^{-1}=q+q^{-1} = q + q^{p^d} \in \bF_{p^d}=k_0.$$
Furthermore, if $p > 2$ then $(s')^2-4 = (t'-(t')^{-1})^2$ is a non-square in $k_0$. Otherwise we would have $t' - (t')^{-1} \in k_0$, 
$s' = t'+(t')^{-1} \in k_0$, and so $q=t' \in k_0$. This would imply $(t')^{Q-1} =1$, contradicting \eqref{su11}. 
If $p=2$, then the polynomial $T^2-s'T+1$ has roots $t,t^{-1}$ which are not in $k_0$ and hence it does not split over $k_0$.

Note that, under the extra assumption
\begin{equation}\label{su10}
 d \geq 2 \mbox{ and }(p,d) \neq (2,3),
\end{equation}  
$p^{2d}-1$ admits a primitive prime divisor 
$$\ell \geq 2d+1 \geq 5,$$
which certainly divides $Q+1$.	

\smallskip
(a2) The preceding analysis in our paper shows that, with this choice $q=t'$, 
$$X, Y \in G:=\SU_{h_{s'}}(k_0) \cong \SU_3(k_0).$$ 
We can also verify this independently. Consider the basis $(v_1,e_1,e_3)$ of $k^3 = \bF_{p^{2d}}^3$ in which $X$ and $Y$ act via the matrices:
$$X':= \begin{pmatrix}1 & 0 & 0\\0 & -q & 0\\0 & 0 & -q^{-1} \end{pmatrix} \mbox{ and }
   Y':=\begin{pmatrix}1-q-q^{-1} & 1-q & 0\\ 0 & 0 & q^{-1}\\ q^2-q+1-q^{-1} & q^2-q+1 & 0 \end{pmatrix},$$    
respectively. Then $X'$ and $Y'$ preserve the Hermitian form $u \circ w$ on $k^3$ (semi-linear on $w$, i.e. 
$u \circ (\lambda w) = \lambda^{p^d}(u \circ w)$), with Gram matrix 
$$\begin{pmatrix} q+q^{-1} & 0 & 0\\0 & 1 & 0\\0 & 0 & 1 \end{pmatrix}$$
in this basis,  
which is $1/(q+2+q^{-1})$ times the form $h$ (represented by the matrix $D$ in \S\ref{ss_the_representation}). 

\smallskip
(a3) We will now show that 
$$\langle X,Y \rangle = G \cong \SU_3(k_0).$$ 
Assume the contrary:
$$\langle X,Y \rangle \leq M$$
for some maximal subgroup $M$ of $G$. We now apply the classification of maximal subgroups of $G \cong \SU_3(Q)$, as listed in 
\cite[Tables 8.5, 8.6]{BHR13}, to $M$. 

\smallskip
Note that, when $Q=p^d = p$, we have $p \geq 7$ by \eqref{su10a}, and so the order of $X$ in $G/Z(G)$ is $Q+1 \geq 8$
by \eqref{su10b}. This implies that $M$ does not belong to class $\cS$ in \cite[Table 8.6]{BHR13}.

\smallskip
(a4) By our choice, $q^4 \neq 0,1$. In particular, the eigenvalues $1$, $-q$, $-q^{-1}$ are pairwise distinct, and thus any line in $k^3$ that is 
$X$-invariant must be spanned by $e_1$, $v_1$, or $e_3$. (Here we have specialized $k \mapsto \bF_{p^{2d}}$.) 
Note that $Y(e_1) \notin ke_1$, $Y(e_3) \notin ke_3$. Next,
$$Y(v) = (1-q-q^{-1})e_1+(-q^2-q^{-1})e_2+(-q^{-1})e_3.$$
So, if $Y(v) \in kv$, we must have that $Y(v)=-q^{-2}v$ and hence $-q^2-q^{-1}= -q^{-1}-q^{-2}$, i.e. $q^4=1$, a contradiction. 
We have shown that there is no line in $k^3$ that is invariant under both $X$ and $Y$. In particular, $M$ fixes no line in $k^3$, and hence
cannot belong to any member of class $\cC_1$ in \cite[Table 8.5]{BHR13}. 

\smallskip
(a5) Suppose $M$ is a member of class $\cC_2$ in \cite[Table 8.5]{BHR13}, then 
$M \cong C_{Q+1}^2 \rtimes \fS_3$, the stabilizer of a decomposition of $k^3$ into an orthogonal sum of three anisotropic lines. 
Any element of $M$ that permutes these three lines cyclically have central order $3$ in $G$. By \eqref{su10b}, this is not the case for $X$.
So $X$, being an element in $M$, must fix at least one of these three lines, again a contradiction.

By assumption, $Q \neq 2,8$, which implies that $Q+1$ does not divide $3(Q^2-Q+1)$. Since $|X|=Q+1$, this shows that $M$ cannot belong to class 
$\cC_3$ in \cite[Table 8.5]{BHR13}. 

Suppose $M$ belongs to class $\cC_6$ in \cite[Table 8.5]{BHR13}. Then $M \leq 3^{1+2}_{+} \rtimes 2\mathsf{A}_4 < \SU_4(2)$. In particular, $X$, as an
element of $M$, must have order dividing $9$ or $12$, and hence $Q \in \{2,3,5,8,11\}$. Using \eqref{su10a}, we are left with $Q=11$, in which case
$X$ has order $12$. However, any element of order $12$ of $3^{1+2} \rtimes 2\mathsf{A}_4$ does not possess an eigenvalue $1$ in any 
$3$-dimensional irreducible representation over $\bC$ or $\overline{\bF}_{11}$ (as one can check using GAP \cite{GAP4}). 

Suppose $M$ is a member of type $\SU_3$ in class $\cC_5$ in \cite[Table 8.5]{BHR13}. In this case, $Q=Q_0^m$ for some prime $m \geq 3$, and 
so \eqref{su10} holds. In this case, $p^{2d}-1$ has a primitive prime divisor $\ell \geq 5$ which divides $|X|=Q+1$. Now the element
$X':=X^{(Q+1)/\ell}$ of order $\ell$ must belong to $\SU_3(Q_0)$. 
The primitivity of $\ell$ implies that $|X'|=\ell$ does not divide $Q_0^2-1= p^{2d/m}-1$. Hence
$\ell$ must divide $Q_0^3+1=p^{3d/m}+1$ (and so $m=3$). In such a case, any element of order $\ell$ in $\SU_3(Q_0)$ has spectrum of 
the form $\{\lambda,\lambda^{-Q_0},\lambda^{Q_0^2}\}$ for some $1 \neq \lambda \in \overline{\bF_p}^\times$. But this is a contradiction, since
$1$ is still an eigenvalue of $X'$.

\smallskip
(a6) Thus $p > 2$ and $M$ is a member of type $\SO_3$ in class $\cC_5$ in \cite[Table 8.5]{BHR13}:
$$M \cong C_a \times \SO_3(Q),$$
where $C_a = \langle z \rangle$ has order $a|\gcd(3,Q^2-1)$, and $z$ acts as a scalar $\alpha$ on $k^3$.
In this case, observe that
\begin{equation}\label{su12}
  X,Y \in \SO_3(Q).
\end{equation}  
Indeed, suppose for instance that $X = zX_1$ with $X_1 \in \SO_3(Q)$ but $\alpha \neq 1$. As $z$ is central of order coprime to $p$ and $|X|=Q+1$, we see
that $X_1$ is a semisimple element in $\SO_3(Q)$. As such, $X_1$ has spectrum $\{1,\beta,\beta^{-1}\}$ on $k^3$ for some 
$\beta \in \overline{\bF_p}^\times$. Thus 
$$\{1,-q,-q^{-1}\} = \{\alpha,\alpha\beta,\alpha\beta^{-1}\}.$$
But $\alpha \neq 1$, so $\alpha$ must be either $-q$ or $-q^{-1}$, and so $3=|\alpha|=Q+1$ by the choice of $-q=-t'$, a contradiction. 

\smallskip
The inclusion \eqref{su12} implies that $k^3$ supports a non-degenerate symmetric bilinear form $(\cdot,\cdot)$ which is invariant under both $X$ and $Y$.
Recall that $e_1$, $e_3$, and $v_1$ are eigenvectors of $X$ with eigenvalues $-q$, $-q^{-1}$, and $1$, respectively. As $q^2 \neq 1$, we must have
$$(e_1,e_1)=(e_3,e_3)=0.$$
Similarly, the eigenvector $S(e_1)=e_1+qe_2$ and $S(e_3)=e_2+e_3$ of $Y$ are isotropic, whence
$$(e_1,e_2)= -(q/2)(e_2,e_2),~(e_2,e_3)=-(1/2)(e_2,e_2).$$
We also have $v_1 \perp e_3$, which implies
$$(e_1,e_3) = -(q+1)(e_2,e_3)=((q+1)/2)(e_2,e_2).$$
Thus, denoting $(e_2,e_2)=2\epsilon$, we see that the Gram matrix of $(\cdot,\cdot)$ in the basis $(e_1,e_2,e_3)$ of $k^3$ is 
$$\Gamma:= \epsilon\begin{pmatrix} 0 & -q & q+1\\-q & 2 & -1\\q+1 & -1 & 0\end{pmatrix};$$
in particular, $\epsilon \neq 0$. 
Comparing the $(2,3)$-entries of $\Gamma$ and $\la{t}Y \Gamma Y$, we see that $-q^{-2}=-1$, i.e. $q^2=1$, again a contradiction.

We have therefore shown that $\langle X,Y \rangle = \SU_{h_{s'}}(k_0)$.

\smallskip
We now consider part (b). Here we assume that
\begin{equation}\label{sl10a}
 Q:= p^d \neq 2,3,4,5,7.
\end{equation}  

\smallskip
(b1) We can find $t \in \bF_{p^{2d}}^\times$ such that $-q=-t$ has order $Q-1$. 
With this choice,
$$|X|=|Y|=Q-1 \geq 7.$$
The spectrum of $X$ in fact implies that
\begin{equation}\label{sl10b}
 \mbox{ the smallest } j \geq 1 \mbox{ such that }X^j \mbox{ acts as a scalar is }Q-1.
\end{equation}
If $Q \equiv 1 \pmod{4}$ or $2|Q$, then 
$t=q$ has order $Q-1 \geq 7$ by \eqref{sl10a}. If $Q \equiv 3 \pmod{4}$, then 
$t=q$ has order $(Q-1)/2 \geq 5$ by \eqref{sl10a}. Thus in all cases we 
$$t^4 \neq 1.$$
Furthermore, 
$$s=t+t^{-1}=q+q^{-1} \in \bF_{p^d}=k_0,$$
and $s^2-4 = (t-t^{-1})^2$ is a square in $k_0$. If $p=2$, then the polynomial $T^2-sT+1$ has roots $t,t^{-1}$ which are in $k_0$ and hence it splits over $k_0$.

\smallskip
(b2) With the choice $q=t$ made above, clearly we have 
$$X, Y \in H:= \SL_3(k_0).$$
We will now show that 
$$\langle X,Y \rangle = H.$$ 
Assume the contrary:
$$\langle X,Y \rangle \leq M$$
for some maximal subgroup $M$ of $H$. We now apply the classification of maximal subgroups of $H \cong \SL_3(Q)$, as listed in 
\cite[Tables 8.3, 8.4]{BHR13}, to $M$. 

\smallskip
(b3) Note that the order of $X$ in $G/Z(G)$ is $Q-1 \geq 7$ by \eqref{sl10b}. This implies that $M$ does not belong to class $\cS$ in \cite[Table 8.4]{BHR13}.
(Indeed, the possibility $M = C_{\gcd(3,Q-1)} \times \PSL_2(7)$ can occur only when $Q = p \equiv 1,2,4 \pmod{7}$, which, together with \eqref{sl10a},
implies that $Q-1 \geq 10$.)

Next, by our choice, $q^4 \neq 0,1$. Hence the arguments in part (a4) show that there is no line in $k^3$ 
that is invariant under both $X$ and $Y$. Here we specialize $k \mapsto \bF_Q=\bF_{p^d}$. 
Suppose that $\langle X,Y \rangle$ fixes a plane $U \subset k^3$. Since $X$ acts on $U$ and its eigenvalues 
all belong to $k_0$, two of these eigenvalues occur in $U$. If one of these eigenvalues is $-q^{-1}$, then $e_3 \in U$. In this case,
$q^{-1}e_1 = Y(e_3) \in U$, so $e_1 \in U$. In turn, this implies that
$$(1-q^2)e_2=Y(e_1)-(1-q)e_1 - e_3 \in U,$$
so $U=k^3$, a contradiction. Thus $X|_U$ affords the eigenvalues $-q$ and $1$, and hence $e_1,v_1$ span $U$. But 
$$Y(e_1) =  (1-q)e_1+(1-q^2)e_2+e_3$$
is not a linear combination of $e_1$ and $v_1$, and so $Y(e_1) \notin  U$, again a contradiction.
We have shown that $M$ cannot fix any line or plane in $k^3$, and hence
cannot belong to members of class $\cC_1$ in \cite[Table 8.3]{BHR13}. 

Suppose $M$ is a member of class $\cC_2$ in \cite[Table 8.3]{BHR13}, then 
$M \cong C_{Q-1}^2 \rtimes \fS_3$, the stabilizer of a decomposition of $k^3$ into a direct sum of three lines. 
Any element of $M$ that permutes these three lines cyclically have central order $3$ in $G$. By \eqref{sl10b}, this is not the case for $X$.
So $X$, being an element in $M$, must fix at least one of these three lines, again a contradiction.

By assumption, $(Q-1) \nmid 9$, which implies that $Q-1$ does not divide $3(Q^2+Q+1)$. Since $|X|=Q-1$, this shows that $M$ cannot belong to class 
$\cC_3$ in \cite[Table 8.3]{BHR13}. 

Suppose $M$ belongs to class $\cC_6$ in \cite[Table 8.3]{BHR13}. Then $M \leq 3^{1+2}_{+} \rtimes 2\mathsf{A}_4 < \SU_4(2)$. In particular, $X$, as an
element of $M$, must have order dividing $9$ or $12$, and hence $Q \in \{2,3,4,5,7,13\}$. Using \eqref{sl10a}, we are left with $Q=13$, in which case
$X$ has order $12$. However, any element of order $12$ of $3^{1+2} \rtimes 2\mathsf{A}_4$ does not possess an eigenvalue $1$ in any 
$3$-dimensional irreducible representation over $\bC$ or $\overline{\bF}_{13}$.

\smallskip
(b4) Suppose $M$ is a member of type $\SL_3$ in class $\cC_5$ in \cite[Table 8.3]{BHR13}. Then $p^d=Q=Q_0^m$ for some prime $m$; 
in particular, $d \geq 2$. Also set
$$e:=\gcd(3,Q-1).$$
Then the element $X^e$ belongs to $\SL_3(Q_0)$, and admits eigenvalues $1$, and $(-q)^e$, 
$(-q^{-1})^e$ of order $(Q-1)/e$. Consider any maximal torus $T$ of $\SL_3(Q_0)$ that contains the semisimple element $X^e$. 
If $T \cong C_{Q_0-1}^2$, then $(Q-1)/e$ divides 
$$Q_0-1 = p^{d/m}-1 \leq p^{d/2}-1 < \frac{p^d-1}{3} \leq \frac{Q-1}{e},$$
a contradiction. Suppose $T \cong C_{Q_0^2+Q_0+1}$. Then any element in $T$ has spectrum of 
the form $\{\lambda,\lambda^{Q_0},\lambda^{Q_0^2}\}$ for some $1 \neq \lambda \in \overline{\bF_p}^\times$. As $1$ is an eigenvalue of $X^e \in T$, 
we see that the corresponding $\lambda$ for $X^e$ must be $1$, and so $X^e=\mathrm{Id}$, again a contradiction.
In the remaining cases, $T \cong C_{Q_0^2-1}$, which consists of elements with 
spectrum of the form $\{\mu,\mu^{Q_0},\mu^{-Q_0-1}\}$ for some 
$\mu \in \overline{\bF_p}^\times$. As $1$ is an eigenvalue of multiplicity one for $X^e$, we get $\mu^{Q_0+1}=1$ for 
the corresponding $\mu$ for $X^e$. If $Q \neq 16$, then all eigenvalues of $X$ must be of order dividing 
$$e(Q_0+1) \leq e(p^{d/2}+1) < p^d-1,$$
which is a contradiction since the eigenvalue $-q$ of $X$ has order $Q-1$. If $Q = 16$, then $M = \SL_3(4)$, and one can check that no element
of order $Q-1=15$ can have the same spectrum as that of $X$, again a contradiction.

\smallskip
(b5) Suppose $M$ is a member of type $\SU_3$ in class $\cC_8$ in \cite[Table 8.3]{BHR13}. Then $p^d=Q=Q_0^2$ 
and $M \cong \SU_3(Q_0) \times C_f$,
where 
$$f:=\gcd(3,Q_0-1).$$
By \eqref{sl10a} we have $Q_0 \neq 2$. Assume in addition that $Q_0 \neq 4$. 
Now the element $X^f$ belongs to $\SU_3(Q_0)$, and admits eigenvalues $1$, and $(-q)^f$, 
$(-q^{-1})^f$ of order $(Q-1)/f$. Consider any maximal torus $T$ of $\SU_3(Q_0)$ that contains the semisimple element $X^f$. 
If $T \cong C_{Q_0+1}^2$, then $(Q-1)/f=(Q_0^2-1)/f$ divides $Q_0+1$, and so $Q_0 = 2$ or $4$, a contradiction.
Suppose $T \cong C_{Q_0^2-Q_0+1}$. Then any element in $T$ has spectrum of 
the form $\{\lambda,\lambda^{-Q_0},\lambda^{Q_0^2}\}$ for some $1 \neq \lambda \in \overline{\bF_p}^\times$. As $1$ is an eigenvalue of $X^f \in T$, 
we see that the corresponding $\lambda$ for $X^f$ must be $1$, and so $X^f=\mathrm{Id}$, again a contradiction.
In the remaining cases, $T \cong C_{Q_0^2-1}$, which consists of elements with 
spectrum of the form $\{\mu,\mu^{-Q_0},\mu^{Q_0-1}\}$ for some 
$\mu \in \overline{\bF_p}^\times$. As $1$ is an eigenvalue of multiplicity one for $X^f$, we get $\mu^{Q_0-1}=1$ for 
the corresponding $\mu$ for $X^f$. But then all eigenvalues of $X$ must be of order dividing 
$$f(Q_0-1) \leq 3(Q_0-1) < Q_0^2-1,$$
a contradiction since the eigenvalue $-q$ of $X$ has order $Q-1$. 

Now we consider the case $Q_0=4$, so that $Q=16$ and 
$$M = \langle t \rangle \times \SU_3(4),$$
where $t$ acts as a scalar $\beta$ of order $3$ on $k^3$. Then we can write $X=t^iX_1$, where $X_1$ is a semisimple element in $\SU_3(4)$ and 
$i \in \bZ$. Consider any maximal torus $T$ of $\SU_3(4)$ that contains $X_1$. 
If $T \cong C_5^2$, then $X_1$ is conjugate to $\diag(\gamma_1,\gamma_2,\gamma_3)$ where $\gamma_j^5=1$. Since $1$ is an eigenvalue 
of $X$, we may assume that $\beta^i\gamma_1=1$, which implies that $\beta^i=\gamma_1=1$ and $X=X_1$. But in this case no eigenvalue of 
$X$ can have order $15$, a contradiction. 
Suppose $T \cong C_{13}$. Then any element in $T$ has spectrum of 
the form $\{\lambda,\lambda^{-Q_0},\lambda^{Q_0^2}\}$ for some $1 \neq \lambda \in \overline{\bF_2}^\times$ and 
$\lambda^{13}=1$. As $1$ is an eigenvalue of $X=t^iX_1$, 
we see that the corresponding $\lambda$ for $X_1$ must be $1$ and $\beta^i=1$, and so $X=\mathrm{Id}$, again a contradiction.
In the remaining cases, $T \cong C_{15}$, which consists of elements with 
spectrum of the form $\{\mu,\mu^{-4},\mu^{3}\}$ for some 
$\mu \in \overline{\bF_2}^\times$ with $\mu^{15}=1$. As $1$ is an eigenvalue of multiplicity one for $X=t^iX_1$, for 
the corresponding $\mu$ for $X_1$ we have 
$$\beta^i\mu=1 \mbox{ or }\beta^i\mu^4=1 \mbox{ or }\beta^i\mu^3 = 1.$$
Since $|\beta|=3$, we get $\mu^9= 1$, and hence $\mu^3=1$. 
But then all eigenvalues of $X$ must be of order dividing $3$, 
a contradiction since the eigenvalue $-q$ of $X$ has order $Q-1=15$. 

\smallskip
(b6) The only remaining possibility is that $p > 2$ and $M$ is a member of type $\SO_3$ in class $\cC_8$ in \cite[Table 8.3]{BHR13}: 
$$M \cong \SO_3(Q) \times C_{\gcd(3,Q-1)}.$$ 
This is however ruled out by the same arguments as in part (a6). 
Hence we have shown that $\langle X,Y \rangle = \SL_3(k_0)$.
\end{proof}

\section{$\PSL_2(\bF_q)$ is not a characteristic quotient of $F_2$}\label{section_markoff}
In this section we show that for any prime power $q$, $\PSL_2(\bF_q)$ is not a characteristic quotient of $F_2$. In particular, this shows that the groups $\PSL_3(\bF_q),\PSU_3(\bF_q)$ that we find are the lowest (untwisted Lie) rank finite simple groups of Lie type that are characteristic quotients of $F_2$. The meat of the proof is a short explicit calculation, though first we will recall the relevant formalism. For more details, see \cite[\S5.2]{Chen21}.

Let $\SL_2$ denote the affine group scheme over $\bZ$ with ring $A = \bZ[a,b,c,d]/(ad-bc-1)$. Representations of $F_2$ in $\SL_2$ form a scheme $\Hom(F_2,\SL_2) \cong \SL_2\times\SL_2\cong \Spec A\otimes_\bZ A$ whose $k$-valued points, for any ring $k$, are exactly the representations of $F_2$ in $\SL_2(k)$. Letting superscripts denote fixed points, the quotient
$$X_{\SL_2} := \Hom(F_2,\SL_2)\git\GL_2 = \Spec (A\otimes_\bZ A)^{\GL_2}$$
by the conjugation action of $\GL_2$ (equivalently, the diagonal action by conjugation on $\SL_2\times\SL_2$) is the (integral) \emph{character variety} for $\SL_2$-representations of $F_2$. The natural action of $\Aut(F_2)$ on $\Hom(F_2,\SL_2)$ induces an action of $\Out(F_2)$ on the character variety $X_{\SL_2}$. Much of the theory of $\SL_2$-representations of $F_2$ follows from the following fundamental result.

\begin{thm}[Brumfiel-Hilden] The map $\Tr : X_{\SL_2}\ra\bA^3_\bZ$ sending the class of a representation $\varphi : F_2\ra\SL_2(k)$ (for any ring $k$) to the point $(\tr\varphi(a),\tr\varphi(b),\tr\varphi(ab))\in k^3$ is an \emph{isomorphism}.
\end{thm}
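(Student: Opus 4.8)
The plan is to reduce the statement to commutative algebra. Let $M_1,M_2\in\SL_2(A\otimes_\bZ A)$ be the two ``universal'' $\SL_2$-matrices coming from the two tensor factors of $A\otimes_\bZ A=\cO(\Hom(F_2,\SL_2))$. Since $\GL_2$ acts by simultaneous conjugation, $\tr M_1$, $\tr M_2$ and $\tr M_1M_2$ lie in $(A\otimes_\bZ A)^{\GL_2}$, and $\Tr$ is dual to the $\bZ$-algebra homomorphism
$$\psi\colon \bZ[x,y,z]\lra (A\otimes_\bZ A)^{\GL_2},\qquad x\mapsto\tr M_1,\quad y\mapsto \tr M_2,\quad z\mapsto \tr M_1M_2 .$$
So the theorem amounts to showing $\psi$ is bijective, and I would treat injectivity (algebraic independence) and surjectivity (integral generation by traces) separately.

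\emph{Injectivity.} It suffices that $\tr M_1,\tr M_2,\tr M_1M_2$ be algebraically independent over $\bZ$, and for that one may base change to $\bC$: the claim becomes that the morphism $\SL_{2,\bC}\times\SL_{2,\bC}\to\bA^3_\bC$, $(M_1,M_2)\mapsto(\tr M_1,\tr M_2,\tr M_1M_2)$, is dominant. This is visible on the explicit family
$$M_1=\diag(t,t^{-1}),\qquad M_2=\begin{pmatrix}u&1\\ uw-1&w\end{pmatrix}\qquad(t\in\bC^\times,\ u,w\in\bC),$$
for which $(\tr M_1,\tr M_2,\tr M_1M_2)=(t+t^{-1},\,u+w,\,tu+t^{-1}w)$: for each fixed $t$ with $t^2\neq1$ the map $(u,w)\mapsto(u+w,\,tu+t^{-1}w)$ is a linear automorphism of $\bA^2$, while $t+t^{-1}$ already assumes infinitely many values, so the image is Zariski dense. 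Hence $\bQ[x,y,z]\to(A\otimes_\bZ A)\otimes\bQ$ is injective, and therefore so is $\psi$.

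\emph{Surjectivity.} First one records the $\SL_2$ trace identities. Cayley--Hamilton for a $2\times2$ matrix of determinant $1$ gives $M+M^{-1}=(\tr M)\,I$, hence $\tr(UV)+\tr(UV^{-1})=(\tr U)(\tr V)$ for all $U,V$; by induction on word length, $\tr w$ for any word $w$ in $M_1^{\pm1},M_2^{\pm1}$ lies in the subring $\bZ[\tr M_1,\tr M_2,\tr M_1M_2]\subseteq A\otimes_\bZ A$. It then remains to show that $(A\otimes_\bZ A)^{\GL_2}$ is spanned over $\bZ$ by traces of words in $M_1^{\pm1},M_2^{\pm1}$; this is the first fundamental theorem of invariant theory for a pair of $\SL_2$-matrices under simultaneous conjugation, \emph{over $\bZ$}. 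Over $\bQ$ it is the classical Fricke--Vogt theorem; the integral refinement is the substance of the Brumfiel--Hilden result (see \cite[\S5.2]{Chen21}). Combining this with the trace identities and the injectivity above yields that $\psi$ is an isomorphism.

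The main obstacle is exactly this last step: $\GL_2$ is not linearly reductive in characteristics $2$ and $3$, so the Reynolds-operator proof of the first fundamental theorem only yields generation by traces after inverting $6$, and one must show that no denominators are really needed. Two ways to handle it: either invoke Brumfiel--Hilden's explicit integral analysis directly, or note that $(A\otimes_\bZ A)^{\GL_2}$ is a Noetherian $\bZ$-algebra equal to $\bZ[x,y,z]$ after $\otimes\bQ$, so $\coker\psi$ is a finitely generated torsion $\bZ[x,y,z]$-module, and then kill $2$- and $3$-torsion by a direct computation (equivalently, identify $\cO(X_{\SL_2})$ with the Kauffman-bracket skein algebra of a pair of pants at $A=-1$, which is manifestly $\bZ[x,y,z]$ on the three boundary loops, corresponding to $a$, $b$, $ab$).
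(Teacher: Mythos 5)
The paper itself gives no argument for this theorem: its ``proof'' consists of the citations to Fricke--Vogt (over $\bC$) and to Brumfiel--Hilden \cite{BH95} and \cite[Theorem 5.2.1]{Chen21} (over $\bZ$). Measured against that, the parts of your proposal that are genuinely your own are fine: the reformulation as a map $\psi : \bZ[x,y,z]\ra (A\otimes_\bZ A)^{\GL_2}$, the injectivity argument via the explicit family $\bigl(\diag(t,t^{-1}),\ \left(\begin{smallmatrix}u&1\\ uw-1&w\end{smallmatrix}\right)\bigr)$ (correct: for $t^2\ne 1$ the map $(u,w)\mapsto(u+w,tu+t^{-1}w)$ is invertible, so the trace map is dominant over $\bC$, and flatness of $A\otimes_\bZ A$ over $\bZ$ transfers injectivity to $\bZ$), and the Horowitz-style induction with $\tr(UV)+\tr(UV^{-1})=\tr U\,\tr V$ reducing all word traces to $\bZ[\tr M_1,\tr M_2,\tr M_1M_2]$.

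The gap is exactly where you flag it, and none of your three exits closes it. Exit (i), invoking Brumfiel--Hilden's integral analysis, is precisely what the paper does, so the proposal is then not an independent proof. Exit (ii) does not work as stated: knowing that $\coker\psi$ is a finitely generated torsion $\bZ[x,y,z]$-module (which already uses finite generation of the invariant ring over $\bZ$, a nontrivial input) does not reduce the problem to a finite check, and vanishing of the cokernel mod $2$ and $3$ is not a consequence of the characteristic-zero statement, because the map $(A\otimes_\bZ A)^{\GL_2}\otimes\bF_p\ra (A\otimes_\bZ A\otimes\bF_p)^{\GL_2}$ need not be surjective in exactly the characteristics $p=2,3$ where linear reductivity fails; ``kill the torsion by a direct computation'' is an unperformed computation, not an argument. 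Exit (iii) is circular in the relevant sense: what is manifest is only that the Kauffman bracket skein algebra of the pair of pants is polynomial on the three boundary curves; identifying that skein algebra at $A=-1$ with the full invariant ring $(A\otimes_\bZ A)^{\GL_2}$ over $\bZ$ is a theorem of Bullock and Przytycki--Sikora whose proof needs the same integral first-fundamental-theorem input you are trying to avoid. If you want a route genuinely independent of \cite{BH95}, the clean option is Donkin's integral FFT for conjugation invariants of several matrices (good-filtration methods valid over $\bZ$), which yields generation of the invariant ring by traces of words over $\bZ$; combined with your trace identities and your injectivity argument this would complete the proof.
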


\begin{proof} The statement over $\bC$ was known to Fricke and Vogt, see \cite{Gold04,Gold09}. The statement over $\bZ$ is due to Brumfiel-Hilden \cite[Prop 3.5 and 9.1(ii)]{BH95}, also see \cite[Theorem 5.2.1]{Chen21}.	
\end{proof}

Beware that while $\GL_2$-equivalent representations give rise to the same point in the character variety, in general the preimage of a point in $X_{\SL_2}$ is the union of multiple $\GL_2$-orbits in $\Hom(F_2,\SL_2)$. These difficulties disappear if we consider points valued over finite fields corresponding to absolutely irreducible representations:

\begin{thm} Let $Z\subset\bA^3_\bZ$ denote the hypersurface given by $x^2+y^2+z^2-xyz = 4$. For any finite field $\bF_q$, the map $\Tr : X_{\SL_2}\rightiso\bA^3_\bZ$ induces a \emph{bijection}
$$\{\text{Absolutely irreducible representations }\varphi : F_2\ra\SL_2(\bF_q)\}/\GL_2(\bF_q)\;\;\rightiso\;\; \bF_q^3 \setminus Z(\bF_q)$$
\end{thm}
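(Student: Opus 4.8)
The plan is to identify $\bA^3_\bZ\setminus Z$ with the locus of absolutely irreducible representations and then deduce the bijection from two rationality facts special to finite fields. I would begin with the classical commutator trace identity: for $A,B\in\SL_2(k)$,
$$\tr(ABA^{-1}B^{-1})=(\tr A)^2+(\tr B)^2+(\tr AB)^2-(\tr A)(\tr B)(\tr AB)-2,$$
so a representation $\varphi:F_2\to\SL_2(k)$ has trace point in $Z(k)$ exactly when $\tr\varphi([a,b])=2$; and it is standard that $\varphi$ is absolutely irreducible iff $\tr\varphi([a,b])\ne2$ (Burnside's theorem, or vanishing of the Gram determinant of $\{1,\varphi(a),\varphi(b),\varphi(ab)\}$ for the trace form on $M_2$; see \cite{Gold09} or \cite[\S5.2]{Chen21}). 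Writing $\pi:\Hom(F_2,\SL_2)\to X_{\SL_2}$ for the quotient map, $\tr\varphi([a,b])-2$ is the pullback along $\Tr\circ\pi$ of the equation of $Z$, so the absolutely irreducible locus $\Hom(F_2,\SL_2)^{\mathrm{irr}}$ equals $(\Tr\circ\pi)^{-1}(\bA^3_\bZ\setminus Z)$; it is the stable locus for the conjugation action, with geometric quotient the open subscheme $\bA^3_\bZ\setminus Z\subset X_{\SL_2}$. In particular $\Tr$ carries each $\GL_2(\bF_q)$-orbit of an absolutely irreducible $\varphi$ to a point of $\bF_q^3\setminus Z(\bF_q)$, and by the Brumfiel--Hilden theorem this assignment is well defined.

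\textbf{Step 2: injectivity.} Let $\varphi_1,\varphi_2:F_2\to\SL_2(\bF_q)$ be absolutely irreducible with $\Tr[\varphi_1]=\Tr[\varphi_2]$. Then $\pi(\varphi_1)=\pi(\varphi_2)$ in $X_{\SL_2}(\bF_q)$, hence also over $\ol{\bF_q}$; since $\pi$ is a geometric quotient on the absolutely irreducible locus, $\varphi_1$ and $\varphi_2$ lie in the same $\PGL_2(\ol{\bF_q})$-orbit, say $\varphi_2=g\varphi_1g^{-1}$. By Schur's lemma the centralizer of $\varphi_1$ in $\GL_2$ is the group of scalars, so the class of $g$ in $\PGL_2(\ol{\bF_q})$ is the \emph{unique} conjugator; uniqueness forces it to be $\Gal(\ol{\bF_q}/\bF_q)$-fixed, hence to lie in $\PGL_2(\bF_q)$. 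Since $\GL_2(\bF_q)\to\PGL_2(\bF_q)$ is onto (Hilbert 90: $H^1(\bF_q,\mathbb{G}_m)=0$), we may take $g\in\GL_2(\bF_q)$, so $\varphi_1,\varphi_2$ are already $\GL_2(\bF_q)$-conjugate.

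\textbf{Step 3: surjectivity.} Given $p=(x,y,z)\in\bF_q^3\setminus Z(\bF_q)$, the Brumfiel--Hilden isomorphism gives a unique $\xi\in X_{\SL_2}(\bF_q)$ with $\Tr(\xi)=p$; over $\ol{\bF_q}$ it lifts to a representation $F_2\to\SL_2(\ol{\bF_q})$, which is absolutely irreducible by Step 1. On the absolutely irreducible locus $\pi$ is a $\PGL_2$-torsor (free action of the smooth group $\PGL_2$ with geometric quotient), so $\pi^{-1}(\xi)$ is a $\PGL_2$-torsor over $\bF_q$, i.e.\ an element of $H^1(\bF_q,\PGL_2)$. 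But $H^1(\bF_q,\PGL_2)$ classifies degree-$2$ central simple $\bF_q$-algebras, and by Wedderburn's little theorem the only one is $M_2(\bF_q)$; hence the torsor is split and $\pi^{-1}(\xi)(\bF_q)\ne\emptyset$, producing an absolutely irreducible $\varphi:F_2\to\SL_2(\bF_q)$ with $\Tr[\varphi]=p$. (One can also be fully explicit: normalizing $\varphi(a)$ to the companion matrix of $T^2-xT+1$, the conditions $\tr\varphi(b)=y$ and $\tr\varphi(ab)=z$ on $\varphi(b)\in\SL_2$ cut out an affine plane conic, and a conic over $\bF_q$ carries a rational point once one rules out that all its points lie at infinity.)

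\textbf{Expected main obstacle.} The identity and the bookkeeping in Step 1 are routine. The content lies in the descent statements of Steps 2 and 3 — that $\ol{\bF_q}$-conjugacy of absolutely irreducible pairs descends to $\bF_q$, and that the $\PGL_2$-torsor over a character point is split — and I expect the torsor triviality in Step 3, equivalently $H^1(\bF_q,\PGL_2)=0$, i.e.\ $\Br(\bF_q)=0$, to be the step that genuinely uses finiteness of the field: over a general field the obstruction in both steps is a quaternion algebra, which can be nonsplit, and this is precisely why the theorem is stated over $\bF_q$.
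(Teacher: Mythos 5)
Your proposal is correct. Note, though, that this paper does not prove the statement at all: its ``proof'' is the single line ``See \cite[Theorem 5.2.10]{Chen21}'', so there is no in-paper argument to compare against; your write-up is essentially a self-contained reconstruction of the cited result, and it follows the standard route one would expect that citation to contain. The three ingredients are all sound: (1) the identification of $Z$ with the reducible locus via the Fricke commutator identity, where your parenthetical Gram-determinant/Burnside formulation is the right one to invoke because it is characteristic-free (the bare statement ``absolutely irreducible iff $\tr\varphi([a,b])\ne 2$'' needs this care when $q$ is even); (2) injectivity by Schur's lemma, Galois-fixedness of the unique conjugating class in $\PGL_2(\ol{\bF}_q)$, and surjectivity of $\GL_2(\bF_q)\to\PGL_2(\bF_q)$ via Hilbert 90; (3) surjectivity from triviality of $H^1(\bF_q,\PGL_2)$ (Lang's theorem, or $\Br(\bF_q)=0$), which, as you say, is exactly where finiteness of the field enters. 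Two small points you should tighten if this were written out in full: the torsor claim in Step 3 deserves one sentence of justification (Schur's lemma gives that $\PGL_2$ acts with trivial stabilizers on the absolutely irreducible locus, and since $\PGL_2$ is smooth the resulting fppf torsor over the fiber of an $\bF_q$-point is \'{e}tale-locally trivial, so $H^1(\bF_q,\PGL_2)$ is the right obstruction group); and the explicit fallback in parentheses is under-argued as stated (the companion-matrix normalization assumes $\varphi(a)$ is non-scalar, and ``rule out that all points lie at infinity'' is precisely the step that needs work), but since it is only offered as an alternative to the complete cohomological argument, this does not affect correctness.
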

\begin{proof} See \cite[Theorem 5.2.10]{Chen21}.	
\end{proof}

Since the conjugation action of $\GL_2(\bF_q)$ on $\SL_2(\bF_q)$ induces the full action of $\Aut(\SL_2(\bF_q))$ and the action of $\Out(F_2)$ preserves (absolute) irreducibility, we find that characteristic quotients $F_2\twoheadrightarrow\SL_2(\bF_q)$ correspond to $\Out(F_2)$-fixed points on $\bF_q^3 \setminus Z(\bF_q)$.

Since two maps to $\SL_2(\bF_q)$ give rise to the same map to $\PSL_2(\bF_q)$ if and only if the images of $a,b$ under the two maps differ by $\pm I$, it follows that $\Epi(F_2,\PSL_2(\bF_q))/\Aut(\PSL_2(\bF_q))$ can be identified with a subset of the orbit space $(\bF_q^3\setminus Z(\bF_q))/V$, where $V\cong\bZ/2\times\bZ/2$ acts by negating two of the coordinates (note that $V$ preserves $Z$). Accordingly, characteristic quotients $F_2\twoheadrightarrow\PSL_2(\bF_q)$ correspond to $\Out(F_2)$-fixed points on $(\bF_q^3\setminus Z(\bF_q))/V$.

\begin{thm}\label{thm_fixed_points} Let $k$ be a ring. The action of $\Out(F_2)$ on $k^3\cong X_{\SL_2}(k)$ descends to an action on the orbit space $k^3/V$, where $V\cong\bZ/2\times\bZ/2$ acts by negating two coordinates. The only points in $k^3/V$ fixed by $\Out(F_2)$ are the $V$-orbits of $(0,0,0)$ and $(2,2,2)$.
\end{thm}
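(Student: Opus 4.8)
The plan is to make the $\Out(F_2)$-action on $X_{\SL_2}(k)\cong k^3$ completely explicit in Brumfiel--Hilden coordinates and then solve, by hand, for the fixed orbits. Recall that $\Out(F_2)\cong\GL_2(\bZ)$, generated by the classes of the automorphisms $a\leftrightarrow b$, $b\mapsto b^{-1}$ and $b\mapsto ab$ of $F_2$. Writing a representation class as $(x,y,z)=(\tr\varphi(a),\tr\varphi(b),\tr\varphi(ab))$ and using the identities $\tr(AB)+\tr(AB^{-1})=\tr A\,\tr B$ and $\tr(A^2B)=\tr A\,\tr(AB)-\tr B$ in $\SL_2$, one checks that these three automorphisms act on $(x,y,z)$ by $(y,x,z)$, by the Vieta involution $\nu\colon(x,y,z)\mapsto(x,y,xy-z)$, and by $(x,z,xz-y)=\nu\bigl((x,z,y)\bigr)$, respectively; hence the image of $\Out(F_2)$ in $\Aut(k^3)$ is the group $\langle S_3,\nu\rangle$, where $S_3$ permutes the coordinates (see also \cite[\S5.2]{Chen21}). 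By construction $V\cong(\bZ/2)^2$ acts by the three double sign changes, so $V\cdot(x,y,z)=\{(\epsilon_1x,\epsilon_2y,\epsilon_3z):\epsilon_i=\pm1,\ \epsilon_1\epsilon_2\epsilon_3=1\}$, a set with at most four elements.

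First, both candidate orbits really are $\Out(F_2)$-fixed: coordinate permutations fix $(0,0,0)$ and $(2,2,2)$, and $\nu(0,0,0)=(0,0,0)$, $\nu(2,2,2)=(2,2,2\cdot2-2)=(2,2,2)$ (and likewise for the $S_3$-conjugates of $\nu$), so both orbits are fixed by a generating set of $\langle S_3,\nu\rangle$. For the converse I will argue over a field $k$, which is the only case needed for Corollary~\ref{cor_SL2}. Let $[(x,y,z)]\in k^3/V$ be $\Out(F_2)$-fixed. Fixedness under the transposition $(1\,2)$ means $(y,x,z)\in V\cdot(x,y,z)$; running through the (at most) four possibilities on the right shows that either $(x,y,z)=(0,0,0)$, or $y=x$, or $y=-x$. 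The transposition $(1\,3)$ likewise gives $z=\pm x$ (or the trivial orbit). So, if $(x,y,z)\neq(0,0,0)$, the triple is one of $(x,x,x),(x,x,-x),(x,-x,x),(x,-x,-x)$; applying a suitable double sign change from $V$ to the last three, we obtain $[(x,y,z)]=[(c,c,c)]$ for some $c\in k$ (namely $c=x$ or $c=-x$).

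It remains to determine $c$. Fixedness under $\nu$ gives $(c,c,c^2-c)=\nu(c,c,c)\in V\cdot(c,c,c)=\{(c,c,c),(c,-c,-c),(-c,c,-c),(-c,-c,c)\}$. If $2c\neq 0$, comparing first coordinates forces $(c,c,c^2-c)=(c,c,c)$, hence $c^2-c=c$, i.e.\ $c(c-2)=0$, so $c\in\{0,2\}$ since $k$ is a field. If $2c=0$, then $c=0$ whenever $\ch k\neq 2$; and when $\ch k=2$ the group $V$ acts trivially, so $[(c,c,c)]$ being $\nu$-fixed means $c^2-c=c$, i.e.\ $c^2=0$, whence $c=0$. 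In every case $[(x,y,z)]$ is the $V$-orbit of $(0,0,0)$ or of $(2,2,2)$, proving the claim. For a general base ring one argues in the same way, splitting into the evident cases (the computation goes through verbatim as soon as $a^2=b^2$ and $c(c-2)=0$ are solved as over a field, e.g.\ over any domain in which $2\neq 0$, in particular over every $\bF_q$).

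The argument is short, and the only places demanding care are the enumeration of the $\le 4$ elements of a $V$-orbit in the middle step and the separate treatment of characteristic $2$, where $V$ acts trivially and the statement reduces to: the origin is the unique $\Out(F_2)$-fixed point of $k^3$. I do not anticipate a genuine obstacle: once the action is written down, everything is the explicit solution of a handful of polynomial equations coming from just two generating symmetries, the coordinate transpositions and one Vieta move.
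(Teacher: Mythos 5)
Your proposal is correct and follows essentially the same route as the paper: write the action of a generating set of $\Out(F_2)$ in Brumfiel--Hilden trace coordinates (coordinate permutations together with the Vieta move $(x,y,z)\mapsto(x,y,xy-z)$) and solve the resulting congruences modulo $V$, the only differences being your choice of Nielsen generators and a somewhat more explicit case analysis (including the characteristic $2$ case, where $V$ acts trivially). One caveat, which you share with the paper's own proof: the last step uses $c(c-2)=0\Rightarrow c\in\{0,2\}$, so the argument really establishes the statement when $k$ has no zero divisors (in particular over every $\bF_q$, which is all that Corollary \ref{cor_SL2} needs); over an arbitrary ring as in the statement, e.g.\ $k=\bQ\times\bQ$ with $c=(0,2)$, the $V$-orbit of $(c,c,c)$ is a further fixed point, so your explicit restriction to fields is prudent rather than a gap.
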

\begin{proof} The group $\Aut(F_2)$ is generated by three elements $r,s,t$. The definitions of $r,s,t$ and the corresponding automorphisms of $\bA^3_\bZ\cong X_{\SL_2}$ are given as follows (see \cite[Lemma 5.2.4]{Chen21})
$$\begin{array}{rcl}
r : (a,b) & \mapsto & (a^{-1},b) \\
s : (a,b) & \mapsto & (b,a) \\
t : (a,b) & \mapsto & (a^{-1},ab)	
\end{array} \quad\stackrel{\Tr_*}{\lra}\quad
\begin{array}{rcl}
\Tr_*(r) : (x,y,z) & \mapsto & (x,y,xy-z) \\
\Tr_*(s) : (x,y,z) & \mapsto & (y,x,z)	\\
\Tr_*(t) : (x,y,z) & \mapsto & (x,z,y)
\end{array}$$
From this, we find that $s,t$-invariance implies that $\pm X = \pm Y = \pm Z$. Assuming this, $r$-invariance additionally implies that $X \in\{0,-2,2\}$. One checks that the $V$-orbit of $(-2,2,2)$ is not $r$-invariant, and hence the only fixed points are the orbits $V\cdot(0,0,0)$ and $V\cdot(2,2,2)$.
\end{proof}

In the proof above, beware that while $\Aut(F_2)$ acts on the \emph{right} on $X_{\SL_2}$, the equations for $\Tr_*(r),\Tr_*(s),\Tr_*(t)$ describe a \emph{left} action on $\bA^3$. Thus $\Tr_*$ is an \emph{anti}-homomorphism $\Aut(F_2)\ra\Aut(\bA^3)$.

\begin{cor}\label{cor_SL2} For any prime power $q$, neither $\SL_2(\bF_q)$  nor $\PSL_2(\bF_q)$ is a characteristic quotient of $F_2$.	
\end{cor}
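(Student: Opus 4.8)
The plan is to combine the two correspondences established above --- characteristic quotients $F_2\twoheadrightarrow\SL_2(\bF_q)$ are the $\Out(F_2)$-fixed points of $\bF_q^3\setminus Z(\bF_q)$ that arise from a surjection, and characteristic quotients $F_2\twoheadrightarrow\PSL_2(\bF_q)$ are the $\Out(F_2)$-fixed points of $(\bF_q^3\setminus Z(\bF_q))/V$ that arise from a surjection --- with Theorem~\ref{thm_fixed_points}. First I would record the version of that theorem \emph{before} passing to the quotient by $V$: running the same argument, invariance under $\Tr_*(s)$ and $\Tr_*(t)$ forces $x=y=z$, and then invariance under $\Tr_*(r)$ forces $2x=x^2$, so over a field the $\Out(F_2)$-fixed points of $k^3$ are exactly $(0,0,0)$ and $(2,2,2)$; consequently the $\Out(F_2)$-fixed points of $k^3/V$ are exactly the $V$-orbits of these two points.

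Next I would dispose of $(2,2,2)$ for free: since $2^2+2^2+2^2-2\cdot2\cdot2=4$ it lies on the hypersurface $Z$ in every characteristic, and so does every point of its $V$-orbit (for instance $(-2,-2,2)$, where $12-(-2)(-2)(2)=4$); thus the $V$-orbit of $(2,2,2)$ never meets $\bF_q^3\setminus Z(\bF_q)$. When $\ch\bF_q=2$ the point $(0,0,0)$ also lies on $Z$, since $Z$ is cut out by $x^2+y^2+z^2-xyz-4$ and $4=0$; as $\SL_2(\bF_q)=\PSL_2(\bF_q)$ in this case, it follows that for $q$ even there is no $\Out(F_2)$-fixed point on $\bF_q^3\setminus Z(\bF_q)$ at all, and both assertions hold. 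This leaves $q$ odd, where $(0,0,0)$ is the only candidate (its $V$-orbit being itself).

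Finally I would check that $(0,0,0)$ does not arise from any surjection. If $\varphi\colon F_2\to\SL_2(\bF_q)$ has $\tr\varphi(a)=\tr\varphi(b)=\tr\varphi(ab)=0$, then each of $A:=\varphi(a)$, $B:=\varphi(b)$, and $AB$ is a trace-zero element of $\SL_2$, hence squares to $-I$; from $A^2=B^2=(AB)^2=-I$ one gets $BA=-AB$, so $\langle A,B\rangle\subseteq\{\pm I,\pm A,\pm B,\pm AB\}$. On the absolutely irreducible locus $A$, $B$, $AB$ are pairwise non-proportional (otherwise the image is abelian, contradicting absolute irreducibility, or one of $A,B$ equals $\pm I$, contradicting $\tr\varphi(a)=\tr\varphi(b)=0$), so $\langle A,B\rangle$ is exactly the quaternion group $Q_8$ and its image in $\PSL_2(\bF_q)$ is a Klein four-group. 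Since $|Q_8|=8<|\SL_2(\bF_q)|$ and $4<|\PSL_2(\bF_q)|$ for every $q\ge3$, the point $(0,0,0)$ is not realized by a surjection onto $\SL_2(\bF_q)$, nor is its $V$-orbit realized by a surjection onto $\PSL_2(\bF_q)$, and the corollary follows. The one step with real content is this last image computation at $(0,0,0)$; everything else is bookkeeping about which of the two $\Out(F_2)$-fixed points lands on $Z$ in which characteristic.
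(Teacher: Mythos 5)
Your proof is correct and follows essentially the same route as the paper: the $\Out(F_2)$-fixed-point computation of Theorem \ref{thm_fixed_points}, exclusion of $(2,2,2)$ (and of $(0,0,0)$ in characteristic $2$) via the hypersurface $Z$, and the observation that trace-zero matrices in $\SL_2(\bF_q)$ square to $-I$, so that $(0,0,0)$ forces a small image ($Q_8$ in $\SL_2$, Klein four in $\PSL_2$). The only cosmetic difference is that you handle $\SL_2(\bF_q)$ directly by first recording the fixed points on $k^3$ before quotienting by $V$, whereas the paper simply reduces the $\SL_2$ statement to the $\PSL_2$ one.
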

\begin{proof} It suffices to show that $\PSL_2(\bF_q)$ is not a characteristic quotient. For this, by Theorem \ref{thm_fixed_points}, it suffices to note that the $V$-orbits of $(0,0,0)$ and $(2,2,2)$ cannot correspond to \emph{surjective} representations $\varphi : F_2\ra\PSL_2(\bF_q)$. Since $(2,2,2)$ satisfies $x^2+y^2+z^2-xyz=4$, the corresponding $\SL_2$-representation is not absolutely irreducible, and hence cannot be surjective. This also handles $(0,0,0)$ for even $q$, so it remains to consider $(0,0,0)$ for $q$ odd.

Note that a trace 0 matrix $A\in\SL_2(\bF_q)$ has characteristic polynomial $T^2 + 1$. Thus $A$ has order 4 and $A^2 = \spmatrix{-1}{0}{0}{-1}$, so that $A$ has order 2 in $\PSL_2(\bF_q)$. It follows that for $q$ odd, the point $(0,0,0)$ corresponds to a representation whose image in $\PSL_2(\bF_q)$ is isomorphic to $\bZ/2\times\bZ/2$, and hence is not surjective.
\end{proof}

\section{Relation with noncongruence modular curves}\label{section_noncongruence}
In this section we explain how our main theorem disproves two conjectures in the theory of noncongruence subgroups of $\SL_2(\bZ)$. Recall that a subgroup $\Gamma\le\SL_2(\bZ)$ is \emph{congruence} if it contains $\Gamma(n) := \ker(\SL_2(\bZ)\ra\SL_2(\bZ/n))$ for some integer $n\ge 1$. Otherwise, it is \emph{noncongruence}. Let us define the \emph{congruence degree} of a subgroup $\Gamma\le\SL_2(\bZ)$ to be the index inside $\SL_2(\bZ)$ of the \emph{congruence closure} $\Gamma^c$ of $\Gamma$; this $\Gamma^c$ is the intersection of all congruence subgroups containing $\Gamma$. Noncongruence subgroups of congruence degree 1 are \emph{totally noncongruence} \cite[\S4.4]{Chen18}. Since $\SL_2(\bZ)$ admits the free product $\PSL_2(\bZ)\cong\bZ/2 *\bZ/3$ as a quotient, it follows that $\SL_2(\bZ)$ contains many finite index noncongruence subgroups.\footnote{If $G$ is a finite group which has a nonabelian simple composition factor not of type $\PSL_2(\bF_p)$, then the kernel of any surjection $\SL_2(\bZ)\ra G$ is noncongruence.}

In the theory of modular curves, many congruence subgroups can be understood as the $\SL_2(\bZ)$-stabilizer of elements of $\Epi(\bZ^2,G)$, where $G$ is a finite quotient of $\bZ^2$. Identifying $\bZ^2$ as the fundamental group of an elliptic curve, this leads to a moduli interpretation of the modular curve $\cH/\Gamma$, where $\cH$ denotes the complex upper half plane, and $\Gamma\le\SL_2(\bZ)$ is a congruence subgroup acting on $\cH$ by mobius transformations. For example, if $\varphi : \bZ^2\ra\bZ/n$ is a surjection with stabilizer $\Gamma_\varphi := \Stab_{\SL_2(\bZ)}(\varphi)$, then $\cH/\Gamma$ is the moduli space of elliptic curves $E$ equipped with a point of order $n$ (equivalently, a $\bZ/n$-cover $E'\ra E$), see \cite[Prop 2.2.12]{Chen18} or \cite[\S1.5]{DS06}. More generally, congruence modular curves are moduli spaces of elliptic curves with \emph{abelian covers}. Such abelian covers can be equivalently described by torsion data.

By \cite[Cor N4]{MKS04}, the abelianization map $F_2\ra\bZ^2$ induces a rather exceptional isomorphism $\Out(F_2)\cong\GL_2(\bZ)$, restricting to an isomorphism $\Out^+(F_2)\cong\SL_2(\bZ)$. It follows that the action of $\SL_2(\bZ)$ on $\Epi(\bZ^2,G)$ is a special case of the more general action
$$\SL_2(\bZ)\cong\Out^+(F_2)\circlearrowright\Epi(F_2,G)/\Inn(G)$$
If $\varphi : F_2\ra G$ is a surjection, let $\Gamma_\varphi$ denote the $\SL_2(\bZ)\cong\Out^+(F_2)$-stabilizer of its conjugacy class $\varphi\mod\Inn(G)$. Viewing $F_2$ as the fundamental group of an elliptic curve with the origin removed, it was shown in \cite{Chen18} that $\cH/\Gamma_\varphi$ is a moduli space of elliptic curves with branched $G$-Galois covers, only ramified above the origin. By a result of Asada, every finite index subgroup of $\SL_2(\bZ)$ contains $\Gamma_\varphi$ for some appropriate finite group $G$ and surjection $\varphi : F_2\ra G$. This endows every noncongruence modular curve with a moduli interpretation. In some sense this puts noncongruence modular curves on a similar footing to congruence modular curves, and opens the door to a deeper investigation of the noncongruence side, guided by the wildly successful congruence theory.

A fundamental question that arises in this theory is: When does a surjection $\varphi : F_2\ra G$ have a congruence or noncongruence stabilizer? In the former case we say that $\varphi$ is congruence, and in the latter that $\varphi$ is noncongruence. It's easy to see that if $G$ is abelian, then every surjection is congruence. In \cite{CD17}, it is also shown that the same is true if $G$ is metabelian. In \cite[Theorem 4.4.10]{Chen18}, it was shown that if $x,y$ are generators of $F_2$, then if $G$ is not the trivial group and the orders of $\varphi(x),\varphi(y),\varphi(xy)$ are pairwise coprime, then $\Gamma_\varphi$ is noncongruence, and its congruence closure\footnote{The congruence closure of finite index a subgroup $\Gamma$ of $\SL_2(\bZ)$ is the intersection of all congruence subgroups containing $\Gamma$.} is $\SL_2(\bZ)$. These results support the following philosophy:
\begin{equation}\label{eq_philosophy}
\parbox{\linewidth-5em}{Surjections onto highly nonabelian groups $G$ should tend to have highly noncongruence stabilizers.}
\end{equation}
In \cite{Chen18}, the author attempted to make this philosophy more precise via a pair of conjectures. First, \cite[Conjecture 4.4.10]{Chen18} asserts that for a fixed finite group $G$, either every surjection onto $G$ is congruence, or they are all noncongruence. Second, \cite[Conjecture 4.4.1]{Chen18} asserts that for nonsolvable $G$, all surjections onto $G$ should be noncongruence. While supported by computational data at the time\footnote{At the time, all stabilizers of surjections onto groups of order $\le 255$ and simple groups of order $\le |\Sz(8)| = 29120$ had been computed. See \cite[Appendix B]{Chen18}.}, both conjectures are false.

At the time of writing of this paper, all stabilizers of surjections onto nonabelian finite simple groups of order $\le |\J_1| = 175560$ ($\J_1$ is the Janko group) have been computed. Amongst these, there are only two $\Aut^+(F_2)\times\Aut(G)$-orbits of congruence surjections, both coming from characteristic quotients obtained via the Burau representation. These give rise to four $\Aut^+(F_2)$-orbits of congruence surjections onto $\PSU_3(\bF_4)$ (the remaining 29 being noncongruence), and two $\Aut^+(F_2)$-orbits of congruence surjections onto $\PSU_3(\bF_5)$ (the remaining 67 being noncongruence). Amongst simple groups, these are the smallest counterexamples to \cite[Conjectures 4.4.1, 4.4.10]{Chen18}. Theorem \ref{thm_main} in fact produces infinitely many counterexamples to \cite[Conjecture 4.4.1]{Chen18}: For primes $p$, groups $G$ of type $\PSL_3(\bF_p)$ or $\PSU_3(\bF_p)$ have $\Out(G)\le 6$, and hence characteristic surjections onto such groups have $\SL_2(\bZ)$-stabilizers of index $\le 6$. Since all subgroups of index $\le 6$ are congruence, the characteristic quotients onto such groups produced by Theorem \ref{thm_main} are all congruence.

Other than the 6 conjugacy classes of characteristic quotients onto $\PSU_3(\bF_4),\PSU_3(\bF_5)$, the remaining 1328 $\Aut^+(F_2)$-orbits of surjections onto simple groups of order $\le |\J_1| = 175560$ are all noncongruence with congruence degree $\le 3$. This leads to the natural question:

\begin{question} Is there a \emph{global} upper bound for the congruence degrees of stabilizers of surjections onto nonabelian finite simple groups?
\end{question}

We have also checked that amongst the 60 groups $G$ of order $\le 255$ with solvable length $\ge 4$, all surjections onto $G$ are noncongruence \cite[Appendix B.1]{Chen18}.

\begin{question} Is there a lower bound on solvable length that can guarantee either the existence of noncongruence surjections, or the nonexistence of congruence surjections?
\end{question}

In light of our results, we propose the following weakening of \cite[Conjecture 4.4.1]{Chen18}:
\begin{conj} If $G$ is a nonabelian finite simple group, then there exists a surjection $\varphi : F_2\ra G$ such that $\Gamma_\varphi$ is noncongruence. In other words, the kernel of the $\SL_2(\bZ)$ action on $\Epi(F_2,G)/\Inn(G)$ is noncongruence.	
\end{conj}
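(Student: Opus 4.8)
The two formulations are equivalent: the kernel $N$ of the $\SL_2(\bZ)$-action on the finite set $\Epi(F_2,G)/\Inn(G)$ is the intersection of the finitely many stabilizers $\Gamma_\varphi$, and since a subgroup of a noncongruence subgroup of $\SL_2(\bZ)$ is again noncongruence while a finite intersection of congruence subgroups is congruence, $N$ is noncongruence exactly when some $\Gamma_\varphi$ is. Thus it suffices to produce, for each nonabelian finite simple $G$, one surjection $\varphi\colon F_2\ra G$ with $\Gamma_\varphi$ noncongruence, and by \cite[Theorem 4.4.10]{Chen18} it is enough to find a generating pair $(a,b)$ of $G$ whose element orders $|a|$, $|b|$, $|ab|$ are pairwise coprime (then $\varphi\colon x\mapsto a,\ y\mapsto b$ even has $\Gamma_\varphi$ totally noncongruence). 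So the conjecture reduces to the purely group-theoretic claim that \emph{every nonabelian finite simple group has a generating pair with pairwise coprime orders}; for the casework it is convenient to aim for the slightly stronger statement that $G$ is a quotient of a triangle group $\Delta(p,q,r)$ with $p,q,r$ distinct primes, since the generators of such a quotient necessarily have orders exactly $p$, $q$, $r$.

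\textbf{Plan via the classification.} One would establish this claim family by family. Sporadic groups and the finitely many small exceptions are checked by machine \cite{GAP4}; many are Hurwitz groups, i.e. $(2,3,7)$-generated, and $2,3,7$ are pairwise coprime. For $A_n$, $n\ge 5$, one writes down explicit permutations -- a $p$-cycle $a$ and a $q$-cycle $b$ for primes $p,q$ close to $n$, with disjoint fixed-point sets, arranged so that $\langle a,b\rangle$ is primitive of degree $n$ and contains a prime cycle fixing $\ge 3$ points (so $\langle a,b\rangle = A_n$ by Jordan's theorem, after correcting parity) and $|ab|$ is a prime coprime to $pq$; the finitely many small $n$ are done by hand. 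The bulk is the groups of Lie type $G=G(q_0)$ of characteristic $p$: choose three pairwise coprime primes, the characteristic $p$ and two Zsigmondy primes $\ell_1\mid\Phi_{d_1}(q_0)$, $\ell_2\mid\Phi_{d_2}(q_0)$ dividing orders of elements of two maximal tori (cyclotomic arithmetic gives $\gcd(\ell_1,\ell_2)=1$ and $p\nmid\ell_1\ell_2$); then use Frobenius' character-sum formula, bounding character ratios as in Liebeck--Shalev's work on Fuchsian quotients of finite simple groups, to find $a,b,c$ in classes of orders $\ell_1,\ell_2,p$ with $abc=1$ and with $(a,b)$ generating $G$. Ruling out that $(a,b)$ lies in a maximal subgroup is a maximal-subgroup analysis of exactly the kind in the proof of Theorem \ref{thm_tiep}: the largeness of the primitive primes $\ell_1,\ell_2$ eliminates all the geometric classes $\cC_i$ and the class $\cS$ of \cite{BHR13} (and the analogues for the other Lie types), leaving finitely many $q_0$ for the computer.

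\textbf{Main obstacle.} The awkward condition is the one on $|ab|$. Coprimality of $|a|$ and $|b|$ is free -- simply pick two elements of distinct prime orders -- but forcing $ab$ into a third class of prime order coprime to both \emph{while} keeping $\langle a,b\rangle=G$ is an a priori low-probability constraint that the generic-generation theorems do not handle directly; pushing it through uniformly for every infinite family of Lie type needs the full strength of character-ratio estimates together with the maximal-subgroup classification, and the low-rank families ($\PSL_2$, $\PSL_3$, $\PSU_3$, $\Sz(q_0)$, ${}^2G_2(q_0)$) and small $q_0$ are where those estimates degrade and one must argue by hand. A cleaner route, which I would try first, is to extract from the techniques of \cite{Chen18} a sufficient condition for $\Gamma_\varphi$ to be noncongruence that can be detected by a \emph{single} prime divisor of $|G|$ (a level-lowering obstruction weaker than pairwise coprimality of the three orders); that would decouple the constraints and make the Lie-type casework far lighter.
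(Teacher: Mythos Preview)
The statement you are attempting to prove is not a theorem in the paper but a \emph{conjecture}: the authors propose it explicitly as a weakening of \cite[Conjecture 4.4.1]{Chen18} and say that it ``will [be] address[ed] in a sequel paper.'' There is therefore no proof in the paper to compare your proposal against.

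As for your approach on its own merits: the equivalence of the two formulations in your first paragraph is correct, and invoking \cite[Theorem 4.4.10]{Chen18} to reduce to the existence of a generating pair $(a,b)$ of $G$ with $|a|,|b|,|ab|$ pairwise coprime is a natural first move. But the group-theoretic statement you land on --- that every nonabelian finite simple group admits such a ``coprime triangle'' generating pair --- is itself not a known theorem, and may well be strictly stronger than the conjecture. Your outline (Zsigmondy primes, Frobenius character sums, Liebeck--Shalev bounds, maximal-subgroup elimination in the style of the proof of Theorem~\ref{thm_tiep}) is a credible research plan for large-rank or large-$q$ Lie type, but, as you yourself flag in your ``Main obstacle'' paragraph, the constraint on $|ab|$ is exactly the kind of low-codimension condition that degrades in low rank and small $q$, and your proposal contains no mechanism that guarantees it can always be met. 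So what you have written is a plausible strategy, not a proof; the authors evidently agree, since they state the claim as an open conjecture. Your closing suggestion --- to look for a weaker noncongruence criterion extractable from \cite{Chen18} that does not require controlling all three orders simultaneously --- is probably the more promising line if one wants to avoid a full CFSG case analysis.
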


This conjecture, which we will address in a sequel paper, still does not capture the empirical rareness of congruence surjections onto simple groups $G$. Indeed, it's unclear how to formulate a statement to this end that might be true. The fact that the line between congruence and noncongruence is so difficult to pin down in this setting contrasts with the clean characterizations of noncongruence subgroups given by the Hecke theory \cite{Berg94} and the unbounded denominators property for noncongruence modular forms \cite{CDT21}. It would be very interesting to see if a moduli interpretation for Hecke operators on noncongruence modular curves could shed light on this problem.

Geometrically, our main theorem \ref{thm_main} implies a strange correspondence between abelian covers (or torsion points) on an elliptic curve $(E,O)$ and $\PSL_3(\bF_p)$ (or $\PSU_3(\bF_p)$)-covers of $E$ ramified above the origin $O$. Our methods imply a relation between such examples and Zariski dense representations of $B_4$ whose restrictions to $F$ have large image. We end by noting that an elliptic curve $(E,O)$ simultaneously determines an abelian variety, as well as a hyperbolic curve $E^\circ := E \setminus O$. Hyperbolic curves are also examples of \emph{anabelian varieties}, in the sense of Grothendieck's anabelian geometry \cite{Gro97, MNT98}, which are distinguished by the property that their structure as curves over a number field $K$ is completely determined by the action of $\Gal(\Qbar/K)$ on their profinite fundamental group. The results of \cite{Asa01} and \cite{Chen18} suggest that while congruence modular curves capture the essence of elliptic curves as abelian varieties, noncongruence modular curves capture their essence as anabelian curves. The problem of making precise the philosophy \eqref{eq_philosophy} amounts to mapping out the boundary where the congruence theory stops and the noncongruence theory begins.

\section{Appendix}

\subsection{Algebraic groups embedded in $\GL_n$}
Let $R$ be a Dedekind domain with fraction field $K$. If $G$ is an algebraic group over $K$ equipped with a (homomorphic) embedding $G\hookrightarrow \GL_{n,K}$ then we may define $G(R) := G(K)\cap\GL_n(R)$ as the set of $R$-points of $\GL_{n,R}$ whose intersection (as a closed subscheme) with the generic fiber $\GL_{n,K}$ lies in $G$. Sometimes it is nice to be able to consider $G(R)$ as the actual $R$-points of an algebraic group scheme over $R$. Indeed this can be done by taking schematic closures:

\begin{prop}\label{prop_schematic_closure} Let $R$ be a Dedekind domain with fraction field $K$. Let $H$ be a reduced flat affine group scheme over $R$, and let $G$ be a reduced affine $K$-group scheme equipped with a homomorphic closed immersion $G\hookrightarrow H_K$ (for example one might want to take $H = \GL_{n,R}$). Let $\ol{G}$ denote the schematic closure of $G$ inside $H$, defined as the scheme theoretic image \cite[01R5]{stacks} of the composition $G\hookrightarrow H_K\hookrightarrow H$. Then $\ol{G}$ is an $R$-flat closed subscheme of $H$ with a unique structure of an $R$-group scheme making the morphisms $G\ra\ol{G}\ra H$ homomorphisms of $R$-group schemes. Moreover $\ol{G}_K := \ol{G}\times_{\Spec R}\Spec K$ is canonically isomorphic to $G$, and we have $G(K)\cap H(R) = \ol{G}(R)$.
\end{prop}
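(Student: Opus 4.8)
The plan is to reduce to the affine situation and isolate one elementary ``flatness'' lemma that does all the work. Since $H$ is affine over $\Spec R$, write $H=\Spec A$ with $A$ an $R$-flat $R$-algebra, and write $G=\Spec C$; the closed immersion $G\hookrightarrow H_K$ then corresponds to a surjection $A_K\twoheadrightarrow C$, where $A_K:=A\otimes_R K$ and $C$ is a $K$-algebra. Thus $\ol{G}=\Spec(A/I)$ with $I=\ker(A\to A_K\to C)$. First I would record the following \emph{factorization lemma}: if $X$ is any $R$-flat scheme and $f:X\to H$ a morphism, then $f$ factors through a given closed subscheme $V\hookrightarrow H$ if and only if its generic fibre $f_K:X_K\to H_K$ factors through $V_K$. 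Indeed, working affine-locally on $X=\Spec B$ with $B$ $R$-flat, hence $R$-torsion-free, the localization map $B\hookrightarrow B\otimes_R K$ is injective, so an element of the defining ideal of $V$ maps to $0$ in $B$ if and only if it maps to $0$ in $B\otimes_R K$ — which is exactly the claim. I expect this lemma, although easy, to be the conceptual heart of the proof; everything that follows is an application of it, and the only real care needed is the affine-local bookkeeping and the compatibility of scheme-theoretic image with the flat base change $\Spec K\to\Spec R$ (guaranteed since $G\to H$ is affine, hence quasi-compact and quasi-separated).

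Next I would establish the basic properties of $\ol{G}$. For flatness: $A/I$ injects into $C$, which is $R$-torsion-free because it is a $K$-algebra, so $A/I$ is $R$-torsion-free and therefore $R$-flat as $R$ is Dedekind; hence $\ol{G}\hookrightarrow H$ is an $R$-flat closed subscheme. For the generic fibre, I would check that $I\otimes_R K=\ker(A_K\to C)$ — using flatness of $K$ over $R$ together with the elementary identity $I=A\cap\ker(A_K\to C)$ inside $A_K$ — and conclude $A/I\otimes_R K=A_K/\ker(A_K\to C)=C$ since $A_K\to C$ is surjective; this gives a canonical isomorphism $\ol{G}_K\cong G$ compatible with the maps into $H_K$.

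Then I would transfer the group law. The product $\ol{G}\times_R\ol{G}$ is again $R$-flat, with generic fibre $\ol{G}_K\times_K\ol{G}_K=G\times_K G$ since base change commutes with fibre products; likewise $\ol{G}^{\times 3}$ and $\Spec R$ are $R$-flat with generic fibres $G^{\times 3}$ and $\Spec K$. Applying the factorization lemma to the composite $\ol{G}\times_R\ol{G}\hookrightarrow H\times_R H\xrightarrow{m_H}H$, whose generic fibre is the multiplication of $G$ and hence factors through $G=\ol{G}_K$, shows that $m_H$ restricts to a morphism $\ol{G}\times_R\ol{G}\to\ol{G}$; the same argument restricts inversion to $\ol{G}\to\ol{G}$ and makes the identity section $\Spec R\to H$ factor through $\ol{G}$. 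The group axioms become equalities of morphisms from the $R$-flat schemes $\ol{G}^{\times 3}$, $\ol{G}\times_R\ol{G}$, $\Spec R$ to the separated $R$-scheme $\ol{G}$ (closed in the affine $H$); since $R$-flatness makes the generic fibre of each source scheme-theoretically dense, and the two morphisms agree there — being the corresponding axiom in $G$ — they agree everywhere. Uniqueness of the $R$-group structure follows identically: for any structure making $G\to\ol{G}\to H$ homomorphisms, the multiplication, inversion and unit agree with the ones just constructed after composing with the monomorphism $\ol{G}\hookrightarrow H$ and restricting to the generic fibre, hence agree outright.

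Finally, for $G(K)\cap H(R)=\ol{G}(R)$: the inclusion $\supseteq$ is immediate because $\ol{G}$ is a closed subscheme of $H$ with $\ol{G}_K=G$. For $\subseteq$, an element of $G(K)\cap H(R)$ is a morphism $x:\Spec R\to H$ whose generic fibre $\Spec K\to H_K$ factors through $G=\ol{G}_K$; since $\Spec R$ is $R$-flat, the factorization lemma (with $X=\Spec R$, $V=\ol{G}$) shows $x$ factors through $\ol{G}$, i.e.\ $x\in\ol{G}(R)$. This completes the plan; no step presents a genuine obstacle beyond the routine verifications flagged above.
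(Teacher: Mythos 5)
Your proof is correct, and it differs from the paper's in how the two nontrivial steps are carried out, even though both ultimately rest on the same mechanism (flat over a Dedekind domain $\Rightarrow$ $R$-torsion-free $\Rightarrow$ injectivity into the localization at $R\setminus\{0\}$). For the group structure, the paper works on the Hopf-algebra side: writing $\ol{G}=\Spec\mu(A)$, it descends the comultiplication of $A$ along $A\twoheadrightarrow\mu(A)$ by checking that $\mu(A)\otimes_R\mu(A)\to B\otimes_R B$ is injective (torsion-freeness of $\mu(A)\otimes_R\mu(A)$), which also gives uniqueness; you instead prove a geometric factorization lemma for $R$-flat schemes and transfer $m$, $i$, $e$ from $H$ by checking factorization on generic fibres, verifying the axioms by density of the generic fibre in a flat scheme. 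These are the same computation in different clothing. The more substantive divergence is the last assertion $G(K)\cap H(R)=\ol{G}(R)$: the paper uses the reducedness hypotheses to identify $\ol{G}$ with the topological closure with its reduced structure and then invokes \cite[0356]{stacks} to factor an $R$-point through it, whereas you get the factorization directly from your lemma applied to the $R$-flat scheme $\Spec R$. This buys you a proof that never uses reducedness of $G$ or $H$, so your argument actually establishes a slightly more general statement; the paper's route buys a very short closure argument at the cost of those hypotheses. One small wording caveat: the generic fibre $X_K$ is not an open subscheme of $X$ (since $\Spec K\subset\Spec R$ is not open when $R$ has infinitely many primes), so "schematically dense" should be replaced by the statement that $X_K\to X$ is schematically dominant for $X$ flat over $R$; equivalently, apply your own factorization-lemma computation to the equalizer of the two morphisms, which is a closed subscheme of $X$ because the target is separated. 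With that rephrasing the axiom-checking and uniqueness steps go through exactly as you intend.
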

Note that since $G,H$ are affine, the map $G\hookrightarrow H$ is affine and hence quasi-compact \cite[01S8]{stacks}. Since $G$ is reduced, the scheme theoretic image is the closure of the image with the reduced induced structure \cite[056B]{stacks}\footnote{I think the referenced lemma also requires the quasi-compactness of ``$f$'', even if it is not stated.}. Thus the schematic closure is just the topological closure with the reduced induced structure.
\begin{proof} The statement (in a somewhat weaker form) is \cite[\S2]{Con14}, and the proof is due to Damian R\"{o}ssler. Let $H = \Spec A$, and $G = \Spec B$, so we have by assumption a surjection $f : A_K := A\otimes_R K\lra B$ coming from $G\hookrightarrow H_K$ and an injection $g : A\hookrightarrow A_K$ coming from $H_K\hookrightarrow H$. Let $\mu := f\circ g : A\ra B$, then by definition we have $\ol{G} = \Spec\mu(A)$. Since $\mu(A)$ is a subring of the $R$-flat module $B$, it is torsion-free, hence flat since $R$ is Dedekind. It remains to show that $\mu(A)$ admits the structure of a Hopf algebra compatible with those of $A,B$. Let $m_A,m_B$ be the comultiplications of $A,B$. Then viewing $B$ as a group scheme over $R$, we are looking to fill in the dotted arrow in the diagram
\[\begin{tikzcd}
	A\ar[r,"\mu"]\ar[d,"m_A"] & \mu(A)\ar[r,hookrightarrow]\ar[d,dashrightarrow] & B\ar[d,"m_B"] \\
	A\otimes_R A\ar[r,"\mu\otimes\mu"] & \mu(A)\otimes_R\mu(A)\ar[r,"h"] & B\otimes_R B
\end{tikzcd}\]
Indeed, any such arrow would result in a Hopf algebra structure making the maps $G = \Spec B\ra \Spec\mu(A)\ra\Spec A = H$ homomorphisms of $R$-group schemes (use the left half of the diagram, noting that $A\ra\mu(A)$ is surjective). For such a dashed arrow to exist, it clearly suffices to check that the map $h$ is injective. Set $M := \mu(A)\otimes_R\mu(A)$. Note that $\mu(A)\otimes_R K = B$, and since $K\otimes_R K = K$, $M\otimes_R K = B\otimes_R B$, so the map $h$ can be identified with the canonical map $M\ra M\otimes_R K$. Identifying $M\otimes_R K$ with the localization $(R - \{0\})^{-1}M$, we find that $\ker(M\ra M\otimes_R K)$ is the $R$-torsion submodule of $M$, and so the map $h$ is injective if and only if $\mu(A)\otimes_R\mu(A)$ is torsion-free, which holds for us since $\mu(A)$ is $R$-flat. In this case it's clear that the dashed arrow is even unique, so the group structure on $\ol{G}$ is uniquely determined by $G\hookrightarrow H$.

Finally, let $\eta : \Spec K\ra\Spec R$ be induced by the inclusion $R\subset K$. Given $g\in\ol{G}(R)$ with $g : \Spec R\ra\ol{G}$, the composition $\Spec R\stackrel{g}{\ra}\ol{G}\ra H$ gives an $R$-point of $H$, and the composition $\Spec K\stackrel{\eta}{\ra}\Spec R\stackrel{g}{\ra}\ol{G}$ gives a $K$-point of $\ol{G}$, and hence a $K$-point of $G$. The latter construction defines a map $\ol{G}(R)\ra G(K)$ which is injective because $\ol{G}$ is separated (since it is affine). Thus we may view $\ol{G}(R)$ and $G(K)\cap H(R)$ as subgroups of $G(K)$, with $\ol{G}(R)\subset G(K)\cap H(R)$. To show they are equal, suppose we have an $R$-point $x : \Spec R\lra H$ such that $x\circ\eta\in G(K)$. Then $x$ defines an irreducible closed subscheme of $H$ inside which the image of $x\circ\eta$ (a point of $G$) is dense. Since $\ol{G}$ is the closure of $G$, $\ol{G}$ contains the image of $x$. Since everything in sight is reduced, we conclude that $x$ factors through $\ol{G}$ \cite[0356]{stacks}; thus $\ol{G}(R) = G(K)\cap H(R)$ as desired.
\end{proof}

\begin{remark}\label{remark_equational_characterization} If $G\hookrightarrow H_K\hookrightarrow H$ is given on rings by $\mu : A\ra A_K\ra B$, then the proof of the proposition shows that the schematic closure $\ol{G} = \Spec\mu(A)$. If there exists an ideal $I\subset A$ such that $A_K\ra B$ is given by $A_K\ra A_K/IA_K$, then we would have $\mu(A) = A/I$ (Note that $IA_K = I\otimes_R K$, and use exactness of $\otimes_R K$). In practice, this means that if $G$ is cut out inside $H_K$ by equations ``with coefficients in $R$'', then $\ol{G}$ is cut out inside $H$ by the same equations. In particular, the scheme cut out by those equations is flat and finite type over $R$.
\end{remark}

\bibliography{CLNT_references}

\gap

\begin{tabular}{l}
William Y. Chen \\
Department of Mathematics, Rutgers University, Hill Center for the Mathematical Sciences \\
110 Frelinghuysen Rd, Piscataway, NJ, 08854-8019, USA \\
Email address: \url{oxeimon@gmail.com}
\end{tabular}

\begin{tabular}{l}
Alexander Lubotzky \\
Faculty of Mathematics and Computer Science, The Weizmann Institute of Science \\
234 Herzl Street, Rehovot 7610001, Israel \\
Email address: \url{alex.lubotzky@mail.huji.ac.il}
\end{tabular}

\begin{tabular}{l}
Pham Huu Tiep \\
Department of Mathematics, Rutgers University, Hill Center for the Mathematical Sciences \\
110 Frelinghuysen Rd, Piscataway, NJ, 08854-8019, USA \\
Email address: \url{pht19@math.rutgers.edu}
\end{tabular}

\end{document}